\RequirePackage{fix-cm}
\documentclass[10pt,3p, times]{article} 
%
%

\usepackage{amsmath,amsxtra,amssymb,latexsym,amscd,amsthm,amsfonts,amstext,makeidx,upgreek,dsfont,xcolor}
\usepackage[margin=0.7in]{geometry}

\usepackage{mathtools}
\usepackage{cancel}
\usepackage{xcolor}
\usepackage{color}
\usepackage{url}
\usepackage{bigints}
\usepackage{mathrsfs}  
\usepackage{fouridx}
\usepackage{bbm}
\usepackage[colorlinks,backref]{hyperref}

\newcommand{\ip}[2]{\fourIdx{}{0}{}{\!x}{\mathcal{ X}}}

\newcommand\dela[1]{}

\usepackage{todonotes}




\newtheorem{theorem}{Theorem}[section]

\newtheorem{proposition}[theorem]{Proposition}

\newtheorem{corollary}[theorem]{Corollary}
\newtheorem{definition}[theorem]{Definition}

\newtheorem{remark}[theorem]{Remark}

\newcommand{\diver}{\mathrm{div}}
\newcommand{\Tr}{\mathrm{Tr}}

\newcommand{\R}{\mathbb{R}}

\newcommand{\Tb}{\mathbb{T}}
\newcommand{\Lb}{\mathbb{L}}
\newcommand{\Arm}{\mathrm{A}}
\newcommand{\Irm}{\mathrm{I}}
\newcommand{\Prm}{\mathrm{P}}
\newcommand{\Drm}{\mathrm{D}}
\newcommand{\Wrm}{\mathrm{W}}
\newcommand{\drm}{\mathrm{d}}
\newcommand{\Brm}{\mathrm{B}}
\newcommand{\Lrm}{\mathrm{L}}
\newcommand{\Frm}{\mathrm{F}}

\newcommand{\Jcal}{\mathcal{J}}
\newcommand{\Kcal}{\mathcal{K}}
\newcommand{\Pcal}{\mathcal{P}}
\newcommand{\A}{\mathcal{A}}
\newcommand{\Hb}{\mathbb{H}}
\newcommand{\Xb}{\mathbb{X}}
\newcommand{\Vb}{\mathbb{V}}
\newcommand{\Wb}{\mathbb{W}}
\newcommand{\N}{\mathbb{N}}
\newcommand{\ub}{\boldsymbol{u}}
\newcommand{\vb}{\boldsymbol{v}}
\newcommand{\wb}{\boldsymbol{w}}
\newcommand{\zb}{\boldsymbol{z}}
\newcommand{\kb}{\boldsymbol{k}}
\newcommand{\xb}{\boldsymbol{x}}
\newcommand{\z}{\boldsymbol{z}}
\newcommand{\yb}{\boldsymbol{y}}
\newcommand{\fb}{\boldsymbol{f}}
\newcommand{\gb}{\boldsymbol{g}}
\newcommand{\pb}{\mathfrak{p}}
\newcommand{\qb}{\mathfrak{q}}
\newcommand{\eb}{\boldsymbol{e}}
\newcommand{\mfrk}{\mathfrak{m}}

\newcommand{\vfrk}{\mathfrak{v}}
\newcommand{\wi}{\widetilde}
\newcommand{\Eb}{\mathbb{E}}
\newcommand{\Pb}{\mathbb{P}}

\allowdisplaybreaks

\begin{document}

\numberwithin{equation}{section}

\title{\bf Global-in-time optimal control of stochastic third-grade fluids with additive noise}

\author{Kush Kinra$^{1}$$^{*}$ and Fernanda Cipriano$^{2}$
\\ \\
\small $^1$ Center for Mathematics and Applications (NOVA Math), 
\\ \small NOVA School of Science and Technology (NOVA FCT),	Portugal.
\\
\small $^2$ Center for Mathematics and Applications (NOVA Math) and Department of Mathematics,
\\ \small NOVA School of Science and Technology (NOVA FCT),	Portugal.
 \\ \small Email: kushkinra@gmail.com, k.kinra@fct.unl.pt, cipriano@fct.unl.pt
\\ \small
\noindent \textsuperscript{*}Corresponding author:
Kush Kinra, Email: kushkinra@gmail.com, k.kinra@fct.unl.pt
  }
  \date{}
\maketitle

\begin{abstract}
In this article, we address the velocity tracking control problem for a class of stochastic non-Newtonian fluids. More precisely, we consider the stochastic third-grade fluid equation perturbed by infinite-dimensional additive white noise and defined on the two-dimensional torus $\mathbb{T}^2$. The control acts as a distributed random external force. Taking an \emph{infinite-dimensional Ornstein-Uhlenbeck process},  the stochastic system is converted into an equivalent pathwise deterministic one, which allows to show the well-posedness of the original stochastic system globally in time. The state being a stochastic process with sample paths in $\mathrm{L}^\infty(0,T;\mathbb{H}^3(\mathbb{T}^2))$ and finite moments can be controlled in an optimal way. Namely, we establish the existence and uniqueness of solutions to the corresponding linearized state and adjoint equations. Furthermore, we derive an appropriate stability result for the state equation and verify that the G\^ateaux derivative of the control-to-state mapping coincides with the solution of the linearized state equation. Finally, we establish the first-order optimality conditions and prove the existence of an optimal solution.
\end{abstract}

\noindent {\bf Keywords:} Stochastic third-grade fluids; optimal control; necessary optimality condition; infinite-dimensional Wiener process.

\noindent \textbf{Mathematics Subject Classification:} 35R60, 49K20, 76A05, 76D55, 60H15.

\section{Introduction}
The optimal control of the evolution of fluid flows is a significant mathematical challenge with great  implications in real-world applications. This is particularly true for non-Newtonian fluids of differential type, which exhibit complex rheological behaviors that depend on the rate of deformation.  Unlike Newtonian fluids, which satisfy the linear Newtonian viscosity law, these fluids display nonlinear stress-strain relationships, leading to unique phenomena such as shear-thinning, shear-thickening, and viscoelastic effects  (see, for example, \cite{FR80} and references therein). These properties are critical in various industrial and technological processes, including the manufacturing of polymers, the processing of food products,  biomedical applications such as blood flow modeling and many others. The ability to control and optimize these flows is essential for improving efficiency, ensuring process stability, and enhancing product quality.  However, due to the strong nonlinearity and complexity of their governing equations, the mathematical treatment of their optimal control is not an easy issue, requiring involving analysis and sophisticated mathematical tools.
Non-Newtonian fluids of differential type constitute a hierarchy  of complex fluids, the rheological relation of fluids of grade  $n$, $n>1$, is  nonlinear, and its constitutive law reads
\begin{align*}
	\mathbb{S}=-p\Irm+\Frm(\Arm_1(\vb),\cdots,\Arm_n(\vb)), \quad 
\end{align*}
where $\mathbb{S}$ is the Cauchy stress
tensor and  $\Frm$ is an isotropic polynomial function of degree $n$ subject to the usual requirement of material frame indifference,  $\vb$ is  fluid's velocity field and  $\Arm_n, n\geq 1$, are the Rivlin-Ericksen kinematic tensors (see \cite{RE55}), which are defined by 
\begin{align*}
	\left\{\begin{array}{ll}
		\Arm_1(\vb)&=\nabla \vb+\nabla \vb^T,
		\\ \Arm_n(\vb)&=\dfrac{\drm}{\drm t}\Arm_{n-1}(\vb)+\Arm_{n-1}(\vb)(\nabla \vb)+(\nabla \vb)^T\Arm_{n-1}(\vb), \quad n=2,3,\cdots,
	\end{array}
	\right.
\end{align*}
where $\dfrac{\drm}{\drm t}=\dfrac{\partial}{\partial t}+(\vb\cdot \nabla)$ which is also known as material derivative. 

  This work is devoted to third-grade fluids, which are characterized by the following constitutive law 	
\begin{align*}
	\mathbb{S}=-p\Irm+\nu \Arm_1(\vb)+\alpha_1\Arm_2(\vb)+\alpha_2\Arm_1^2(\vb)+\beta_1 \Arm_3(\vb)+\beta_2(\Arm_1(\vb)\Arm_2(\vb)+\Arm_2(\vb)\Arm_1(\vb))+\beta_3 \Tr(\Arm_1^2(\vb))\Arm_1(\vb),
\end{align*}
where 	$\nu$ is the viscosity, and $\{\alpha_i\}_{i=1,2}$ and $\{\beta_i\}_{i=1,2,3}$ are material moduli.  The momentum equations are given by
$$\dfrac{\mathrm{d}\vb}{\mathrm{d}t}=\dfrac{\partial\vb}{\partial t}+(\vb\cdot \nabla) \vb = \diver(\mathbb{S}).$$ 
The  compatibility  with thermodynamic laws imposes the following restriction on the parameters (see \cite{FR80})
\begin{equation}\label{third-grade-paremeters}
	\nu \geq 0, \quad \alpha_1\geq 0, \quad |\alpha_1+\alpha_2 |\leq \sqrt{24\nu\beta}, \quad \beta_1=\beta_2=0, \, \beta_3=\beta \geq 0.
\end{equation}	
Let us notice that    $\beta=0$ corresponds to  the second grade fluids, and $\alpha_1=\alpha_2=0$	and $\beta$=0 gives the 	Navier-Stokes equations. To better understand and describe the properties of various nanofluids, numerous simulation studies have been carried out using third-grade fluid models (see \cite{HUAal20,PP19,RHK18} and references therein).  Nanofluids are engineered colloidal suspensions of nanoparticles-typically made of metals, oxides, carbides, or carbon nanotubes-dispersed in a base fluid such as water, ethylene glycol, or oil.  These nanofluids exhibit higher thermal conductivity than the base fluid and hold significant promise for a wide range of technological applications, including heat transfer systems, microelectronics, fuel cells, pharmaceutical processes, hybrid-powered engines, engine cooling, and vehicle thermal management. 
Therefore, investigating the kinematic properties of third-grade fluids holds significant importance in various applications.
 The governing equations for third-grade fluids are expressed as:
\begin{equation}\label{third-grade-fluids-equations}
	\left\{\begin{aligned}
		\partial_t(\vb-\alpha_1\Delta \vb)-\nu \Delta \vb+(\vb\cdot \nabla)(\vb-\alpha_1\Delta \vb) &+\displaystyle\sum_{j=1}^2 [\vb-\alpha_1\Delta \vb]^j\nabla \vb^j-(\alpha_1+\alpha_2)\text{div}([\Arm(\vb)]^2)\\ -\beta \text{div}[\Tr(\Arm(\vb)[\Arm(\vb)]^T)\Arm(\vb)]
		& = -\nabla p + \fb,  \\[0.2cm]	\text{div}\;\vb& =0,	  
		\end{aligned}\right.
\end{equation}
where $\vb(x,t):\mathbb{T}^2\times [0,\infty) \to\R^2$, $p(x,t):\mathbb{T}^2\times [0,\infty) \to\R$ and $\fb(x,t):\mathbb{T}^2\times [0,\infty) \to\R^2$ represent velocity field, pressure field and external forcing, respectively, at time $t$ and position $x$,  $[\vb-\alpha_1\Delta \vb]^j$ and $\vb^j$ denotes the $j^{th}$ component of these vectors, $\Arm(\vb) := \Arm_1(\vb),$ $\nu>0$ denotes the viscosity of the  fluid  and  $\alpha_1>0,$ $\alpha_2\in\R$, $\beta>0$ are  material moduli verifying \eqref{third-grade-paremeters}.  In this article, we are interested in the stochastic version of the system \eqref{third-grade-fluids-equations} which is given by
\begin{equation}\label{stochastic-third-grade-fluids-equations}
	\left\{\begin{aligned}
		\drm(\vb-\alpha_1\Delta \vb)+\bigg\{-\nu \Delta \vb+(\vb\cdot \nabla)(\vb-\alpha_1\Delta \vb) &+\displaystyle\sum_{j=1}^2 [\vb-\alpha_1\Delta \vb]^j\nabla \vb^j-(\alpha_1+\alpha_2)\text{div}([\Arm(\vb)]^2)\\ -\beta \text{div}[\Tr(\Arm(\vb)[\Arm(\vb)]^T)\Arm(\vb)]\bigg\}\drm t
		& =\left( -\nabla p + \fb \right)\drm t+ \drm\Wrm, && \hspace{-10mm} \text{in }  \mathbb{T}^2 \times (0,\infty),  \\
        \text{div}\;\vb& =0,&& \hspace{-10mm} \text{in }  \mathbb{T}^2 \times [0,\infty),	  \\
      \vb(x,0)&=\vb_0(x) \quad && \hspace{-10mm} \text{in } \mathbb{T}^2,
		\end{aligned}\right.
\end{equation}
where $\Wrm$ is a $GG^{\ast}$-Wiener process defined in some given probability space $(\Omega,\mathscr{F},\mathbb{P})$,  $G$ being a Hilbert-Schmidt  operator from $\mathbb{U}$ to $\Hb$. Here $\mathbb{U}$ denotes some Hilbert space
and $\Hb$ is  the space of functions
in 
 $\Lb^2(\Tb^2)$  which are mean-zero and divergence-free. We recall that $\Wrm$ lives in a bigger space $\widetilde{\mathbb{U}}$ such 
 that the inclusion $i:\mathbb{U}\to \widetilde{\mathbb{U}}$ is an 
 Hilbert-Schmidt operator. For $\mathrm{L}>0$, we identify the  two-dimensional torus $\mathbb{T}^2=\left(\frac{\R}{\mathrm{L}\mathbb{Z}}\right)^2$, with the rectangle $[0,\mathrm{L}]^2$
  with periodic boundary conditions (see \cite[Chapter 3]{Grafakos_2014}), namely $\vb(\cdot,\cdot)$, $p(\cdot,\cdot)$ and $\fb(\cdot,\cdot)$ satisfy the following periodic conditions:
\begin{align}\label{2}
	\vb(x+\mathrm{L}e_{i},\cdot) = \vb(x,\cdot), \ {p}(x+\mathrm{L}e_{i},\cdot) = {p}(x,\cdot) \ \text{and} \ \fb(x+\mathrm{L}e_{i},\cdot) = \fb(x,\cdot), \quad x\in\R^{2},
	\quad i\in\{1,2\},
\end{align}
where $\{e_{1},e_{2}\}$ is the canonical basis of $\R^{2}.$

 The main objective of this article is to investigate an optimal  control problem associated with the stochastic state equation \eqref{stochastic-third-grade-fluids-equations} subject to the periodic boundary conditions \eqref{2}. The resolution of the control problem involves three key steps:
\begin{itemize}
\item [i)] Global-in-time solvability of system \eqref{stochastic-third-grade-fluids-equations} with an appropriate regularity to ensure the well-posedness of linearized state equation.
    \item [ii)] Analyzing the G\^ateaux derivative of the control-to-state mapping, which is characterized by the unique solution of the linearized state equation and addressing the well-posedness of the adjoint equation.
 \item [iii)] Deriving the connection between the solution of the linearized equation and the adjoint state solution, which enables the formulation of the necessary first-order optimality condition in terms of the adjoint state.
\end{itemize}

 Without exhaustiveness, the third-grade fluid equations with Dirichlet boundary condition were studied  in \cite{Amrouche+Cioranescu_1997,Sequeira+Videman_1995}. Later on in \cite{Busuioc+Iftimie_2004,Busuioc+Iftimie_2007}, the authors proved  the existence of a global solution with $\Hb^2$ initial data and Navier-slip boundary condition in 2D as well as 3D, and demonstrated that uniqueness holds in 2D.  The authors of \cite{Cipriano+Didier+Guerra_2021} extended  the later deterministic findings to 2D stochastic models.  In \cite{Tahraoui+Cipriano_2024}, for stochastic third-grade fluids equations perturbed by multiplicative white noise, the authors proved that  the local (in time) solution exists and is unique. Such solution corresponds to an adapted stochastic process 
with sample paths in  $\Hb^3$,  defined up to a specific positive stopping time.  We also mention some works discussing optimal control problem for stochastic Navier-Stokes equations, see \cite{Breckner_1999_thesis,Chemetov+Cipriano_2025,Cutland+Grzesiak_2007,DaPrato+Debussche_2000,Menaldi+Sritharan_2003,Sritharan_2000} etc. Compared to the Navier–Stokes equations, the primary challenge in third-grade fluid equations arises from the presence of third-order derivatives in the nonlinear term, which significantly complicates the analysis. An optimal control problem for stochastic second-grade fluids has been addressed in \cite{Chemetov+Cipriano_2017}.
In the framework of multiplicative noise, it is worth mentioning that the derivation of first-order optimality conditions for the Navier–Stokes equations, as well as for second-grade fluid equations, is possible only under suitable restrictions on the model parameters (see \cite{Breckner_1999_thesis,Chemetov+Cipriano_2017}).
The optimal control of a generic stochastic fluid flow  is not an easy issue.

This work aims to solve the optimal control problem for stochastic third-grade fluid equations \eqref{stochastic-third-grade-fluids-equations} 
 with additive noise, without additional  restrictions on the physical parameters, and over a prescribed time interval. 
In order to solve the stochastic optimal control problem for the random vector field $\vb$
governed  by  the stochastic  third-grade fluid equations \eqref{stochastic-third-grade-fluids-equations}, we first prove  that  $\vb$ satisfies the following key exponential integrability condition
    \begin{align}\label{vb-regularity}
   \Eb\bigg[\exp\bigg\{\hat{c}\int_0^T\|\vb(s)\|^{2}_{\dot{\Hb}^3} \drm s \bigg\}\bigg] <+\infty, \;\; \text{ for } \hat{c}>0.
\end{align}
To handle the additive white noise, we decompose the stochastic system \eqref{stochastic-third-grade-fluids-equations} into two parts: one  part is a linear stochastic differential equation involving the  white noise, while the other one  corresponds to  a random partial differential equation. Under suitable assumptions on the white noise $\Wrm$, we establish the aforementioned regularity of the solution to stochastic system \eqref{stochastic-third-grade-fluids-equations}, which plays a key role in proving the well-posedness of the linearized state equation and demonstrating that the G\^ateaux derivative of the cost functional coincides with the solution of the linearized state equation. 

To the best of our knowledge, this is the first work, which addresses the global-in-time optimal control problem for stochastic  third-grade fluid models. Recently, the authors in \cite{Tahraoui+Cipriano_2025_ESAIM} considered the  system \eqref{third-grade-fluids-equations} perturbed by a multiplicative noise and addressed the corresponding optimal control problem. However, due to the lack of global-in-time  $\Hb^3$-regularity of the sample paths, the  authors in \cite{Tahraoui+Cipriano_2025_ESAIM} was able to control the state just locally-in-time.   Namely,  the cost functional was defined locally by taking an  appropriate stopping time, and the stopping time plays  a major role at each step of the analysis. In contrast to \cite{Tahraoui+Cipriano_2025_ESAIM}, the present work  tackles the  global-in-time optimal control problem associated with system \eqref{stochastic-third-grade-fluids-equations}. We also mention the work \cite{Tahraoui+Cipriano_2023} where authors considered optimal control problem for deterministic third-grade fluid equations \eqref{third-grade-fluids-equations} supplemented by Navier-slip boundary condition.

The current paper's plan is as follows. We begin Section \ref{section-2} by introducing the appropriate functional spaces. Then, we define linear and nonlinear operators. In addition, we present a linear stochastic equation along with its solvability result, which enables us to transform the stochastic equation \eqref{stochastic-third-grade-fluids-equations} into an equivalent pathwise deterministic system (see \eqref{random-third-grade-fluids-equations} below).  We conclude the section by presenting the main result of the article.  In Section \ref{section-3}, we prove the existence and uniqueness of global-in-time solution of system \eqref{stochastic-third-grade-fluids-equations} with appropriate regularity which is required to investigate our main result. Section \ref{section-4} is devoted to the well-posedness of linearized state equation. In Section \ref{Sec-Gateaux-differentiability}, we address the G\^ateaux differentiability of the control-to-state mapping. Section \ref{Sec-adjoint} discusses the well-posedness of the adjoint system corresponding to the linearized state equation. The existence of a solution to the control problem is shown in the final section by establishing a duality property between the solutions of the linearized equation and the adjoint equation. Ultimately, by utilizing the duality relation, we further show that the solution to the control problem satisfies the first-order optimality conditions.

\section{Notations and auxiliary results}\label{section-2}

The goal of this section is to provide necessary notations, function spaces, auxiliary results and the main goal of this article.

\subsection{Function spaces}\label{N-FS}
Let us consider $m\in\mathbb{N}$ and  $p>0$. The usual norms for scalar (resp. vector) valued functions in the classical Lebesgue and Sobolev spaces
	${\mathrm{L}}^p(\mathbb{T}^2)$ (resp. 
	${\mathbb{L}}^p(\mathbb{T}^2)$), ${\mathrm{W}}^{m,p}(\mathbb{T}^2)$ (resp. ${\mathbb{W}}^{m,p}(\mathbb{T}^2)$) and ${\mathrm{H}}^{m}(\mathbb{T}^2)$ (resp. ${\mathbb{H}}^{m}(\mathbb{T}^2)$) will be denoted by $\|\cdot \|_p$, $\|\cdot\|_{{\mathrm{W}}^{m,p}}$ (resp. $\|\cdot\|_{{\mathbb{W}}^{m,p}}$) and 
	$\|\cdot\|_{{\mathrm{H}}^{m}}$ (resp. $\|\cdot\|_{{\mathbb{H}}^{m}}$), respectively. 
	We set $\|\cdot\|_{{\mathrm{W}}^{0,p}}=
	{\mathrm{L}}^p(\mathbb{T}^2)$ (resp. $\|\cdot\|_{{\mathbb{W}}^{0,p}}={\mathbb{L}}^p(\mathbb{T}^2)$).

    Let  $\dot{\mathrm{C}}^{\infty}(\mathbb{T}^2;\R^2)$ denote the space of all infinitely differentiable  functions defined on $\mathbb{T}^2$ with values in $\R^2$ such that $\int_{\mathbb{T}^2}\vb(x)\drm x=\mathbf{0}$. We notice that the functions in $\dot{\mathrm{C}}^{\infty}(\mathbb{T}^2;\R^2)$ satisfy the periodic boundary condition $\vb(x+\mathrm{L}e_i)=\vb(x)$. The Sobolev space  $\dot{\Hb}^s(\mathbb{T}^2)$, $s\in\mathbb{N}_0=:\mathbb{N}\cup\{0\}$, is defined as the completion of $\dot{\mathrm{C}}^{\infty}(\mathbb{T}^2;\R^2)$  with respect to the following Sobolev norm  
	$$\|\vb\|_{ {\Hb}^s}:=\left(\sum_{0\leq|\alpha|\leq s}\|\Drm^{\alpha}\vb\|_{\mathbb{L}^2(\mathbb{T}^2)}^2\right)^{1/2}.$$  According to Proposition 5.39, \cite{JCR}, we have 
	$$\dot{\Hb}^s (\mathbb{T}^2) = \left \{\vb:\vb=\sum_{\kb\in\mathbb{Z}^2}\vb_{\kb} e^{2\pi i \kb\cdot x / \mathrm{L}},\vb_0=\mathbf{0}, \ \bar{\vb}_{\kb}=\vb_{-\kb} , \ \|\vb\|^2_{\dot{\Hb}^s} := \sum_{\kb \in\mathbb{Z}^2}|\kb|^{2s}|\vb_{\kb}|^2<\infty\right\},$$
    and from Proposition 5.38, \cite{JCR}, we infer that 
	$\|\cdot\|_{ \dot{\Hb}^s}$ defines a norm on the space 
	$\|\cdot\|_{ \dot{\Hb}^s}$, which is equivalent to 
	the  induced standard Sobolev norm  $\|\cdot\|_{ {\Hb}^s}$
	on  $\dot{\Hb}^s(\mathbb{T}^2)$.  The zero mean condition provides the well-known \emph{Poincar\'{e} inequality}, 
	\begin{align}\label{poin}
		\lambda_1\int_{\mathbb{T}^2}|\vb(x)|^2\drm x \leq \int_{\mathbb{T}^2}|\nabla\vb(x)|^2\drm x, \text{ for any } \vb\in\dot{{\Hb}}^1(\mathbb{T}^2),
	\end{align} where $\lambda_1 = \frac{4\pi^2}{\mathrm{L}^2}$ (Lemma 5.40, \cite{JCR}). Let us define
	$\mathcal{V}:=\{\vb\in\dot{\mathrm{C}}^{\infty}(\mathbb{T}^2;\R^2):\nabla\cdot\vb=0\}.$ 
    We denote by $\Hb$ the closure of $\mathcal{V}$ in the Lebesgue spaces $\dot{\Lb}^2(\mathbb{T}^2)$; and by  $\Vb$  the closure of $\mathcal{V}$ in the Sobolev space $\dot{{\Hb}}^1(\mathbb{T}^2)$.

	The spaces $\Hb$ and $\Vb$ are Hilbert spaces with inner products defined by
	\begin{align}\label{ipH}
		(\ub,\vb)&:=\int_{\mathbb{T}^2}\ub(x)\cdot \vb(x)\drm x=\sum_{i=1}^2\int_{\mathbb{T}^2}\ub^i(x)\vb^i(x) \; \drm x, \; \text{ for all } \ub,\vb \in \Hb, \\
		(\ub,\vb)_{\Vb} &: = (\nabla \ub,\nabla \vb), \; \text{ for all } \ub,\vb \in \Vb. \label{ipV}
	\end{align} 
	We denote the corresponding norms by  $\|\cdot \|_{2}$ and $\|\cdot\|_{\Vb}$, respectively. 
	\bigskip	

	Let	us	introduce	the		Banach	space	$(\Xb:= {\mathbb{W}}^{1,4}(\mathbb{T}^2)\cap \Vb ,\Vert	\cdot\Vert_{\Xb})$	with	$\Vert	\cdot\Vert_\Xb := \Vert	\cdot\Vert_{{\mathbb{W}}^{1,4}}+\|\cdot\|_{\Vb}.$ We	recall	that	${\mathbb{W}}^{1,4}(\mathbb{T}^2)$	endowed	with the usual norm  $\Vert	\cdot\Vert_{ {\mathbb{W}}^{1,4}}$
	$$\Vert	\wb \Vert_{ {\mathbb{W}}^{1,4}}^4=\int_{\mathbb{T}^2}	\vert	\wb(x)\vert^4 \drm x + \int_{\mathbb{T}^2} \vert	\nabla	\wb(x)\vert^4 \drm x$$	
    is a Banach	space. We represent by  $\langle \cdot,\cdot\rangle $ the duality relation between the spaces $\Vb$  and its dual $\Vb'$ as well as $\Xb$  and its dual $\Xb'$.

	Next, let us introduce  the scalar product between two matrices $A:B=\Tr(AB^T)$ and denote $\vert A\vert^2:=A:A.$
	The divergence of a  matrix $A\in \mathcal{M}_{2\times 2}(E)$ is given by 
	$\big\{\text{div}(A)_i\big\}_{i=1}^{2}=\bigg\{\displaystyle\sum_{j=1}^2\partial_ja_{ij}\bigg\}_{i=1}^{2}. $ We recall that
	\begin{align*}
		(A,B)=\int_{\mathbb{T}^2} A(x) : B(x)\; \drm x ; \quad  \text{ for all } A,B \in \mathcal{M}_{2\times 2}(\Lb^2(\mathbb{T}^2)).
	\end{align*}

Throughout the article,  we denote by $C$   generic constant, which may vary from line to line. 

\subsection{Linear and nonlinear operators}\label{subsec-operator}
Let $\mathcal{P}: \dot{\Lb}^2(\mathbb{T}^2) \to \Hb$ be the Helmholtz-Hodge (or Leray) orthogonal projection  (cf.  \cite{JBPCK}). We define the Stokes operator
	\begin{equation*}
		\A\ub:=-\mathcal{P}\Delta\ub,\;\ub\in\Drm(\A),
	\end{equation*}
	where $\Drm(\A) = \big\{\ub \in\dot{\Hb}^{2}(\mathbb{T}^2):\nabla\cdot\ub=0\big\}$. It should be noted that $\mathcal{P}$ and $\Delta$ commutes in a torus, see \cite[Lemma 2.9]{RRS}.  
Therefore, in the sequel, for $\ub\in\Drm(\A)$, we will write 
\begin{align*}
    \Upsilon(\ub):= \ub+\alpha_1\A\ub = \ub-\alpha_1\Delta\ub. 
\end{align*}

    Given $\ub \in \Drm(\A)$ 
with Fourier expansion $\ub = \sum\limits_{\kb\in\mathbb{Z}^2} e^{2\pi i \kb\cdot x /  \mathrm{L}}\ub_{\kb},$ one obtains 
\begin{align*}
	-\Delta\ub=\frac{4\pi^2}{\mathrm{L}^2}\sum_{\kb\in\mathbb{Z}^2} e^{2\pi i \kb\cdot x /  \mathrm{L}}|\kb|^2\ub_{\kb}.
\end{align*}
Since $\A^{-1}$ is a compact self-adjoint operator in $\Hb$, there exists an  orthonormal basis in $\Hb$ of  eigenfunctions   $\{\wb_i\}_{i\in\mathbb{N}}\subset\dot{\mathrm{C}}^{\infty}(\mathbb{T}^2;\R^2)$ 
of the Stokes operator;
namely we have  $\A \wb_i=\lambda_i \wb_i$, for $i=1,2,\ldots,$ where  
the eigenvalues of $\A$ verify $0<\lambda_1\leq \lambda_2\leq \ldots\to\infty$. Note that $\lambda_1=\frac{4\pi^2}{\mathrm{L}^2}$ is the smallest eigenvalue of $\A$ appearing in the Poincar\'e inequality \eqref{poin}. 

The following relation holds
\begin{equation}\label{ipHV}
\lambda_i (\wb_i,\vb)=(\A \wb_i, \vb)=(-\Delta \wb_i, \vb)=
(\nabla \wb_i, \nabla \vb)=( \wb_i, \vb)_\Vb, \quad\text{	for all }  \vb\in\Vb.
\end{equation}
Hence $\{\wb_i\}_{i\in\mathbb{N}}$ is an orthogonal basis in $\Vb.$

We recall that the operators $\A^{\lambda}$, $\lambda\in\mathbb{R}$, 
are well defined and 
$$\Drm(\A^{s/2})=\big\{\ub\in \dot{\Hb}^{s}(\mathbb{T}^2):\nabla\cdot\ub=0\big\}.$$
In addition it can be shown that there  exists a positive constant $C$ such that
$\|\A^{s/2}\ub\|_{2}=C\|\ub\|_{\dot{\Hb}^{s}},$ for all $\ub\in\Drm(\A^{s/2})$, $s\geq 0$ (see \cite[Chapter 6]{JCR}). Note that the operator $\A$ is a non-negative self-adjoint operator in $\Hb$ with a compact resolvent and
\begin{align}\label{2.7a}
	|( \A\ub,\vb)| \leq C\|\ub\|_{\Vb}
	\|\vb\|_{\Vb},\ \textrm{ for all }\ \ub\in \Drm(\A),
	\; \vb \in \Vb, \text{ then }\ \|\A\ub\|_{\Vb'}\leq \|\ub\|_{\Vb}.
\end{align}
Therefore, there exists a unique extension of $\A$ denoted by the same symbol  verifying
\begin{align*}
	\A:{\Vb}\to {\Vb'}, \qquad \langle \A\ub,\vb\rangle:= (\nabla \ub,\nabla \vb), \qquad \forall \ub, \; \vb \in \Vb.
\end{align*}

Next, we define the {trilinear form} $b(\cdot,\cdot,\cdot):\Vb\times\Vb\times\Vb\to\R$ by $$b(\ub,\vb,\wb)=\int_{\mathbb{T}^2}(\ub(x)\cdot\nabla)\vb(x)\cdot\wb(x)\drm x=\sum_{i,j=1}^2\int_{\mathbb{T}^2}\ub^i(x)\frac{\partial \vb^j(x)}{\partial x_i}\wb^j(x)\drm x.$$ If $\ub, \vb$ are such that the linear map $b(\ub, \vb, \cdot) $ is continuous on $\Vb$, the corresponding element of $\Vb^{\prime}$ is denoted by $\Brm(\ub, \vb)$. We represent   $\Brm(\vb) = \Brm(\vb, \vb)=\mathcal{P}(\vb\cdot\nabla)\vb$. Using an integration by parts, it is immediate that 
\begin{equation}\label{b0}
	\left\{
	\begin{aligned}
		b(\ub,\vb,\vb) &= 0,\ \text{ for all }\ \ub,\vb \in\Vb,\\
		b(\ub,\vb,\wb) &=  -b(\ub,\wb,\vb),\ \text{ for all }\ \ub,\vb,\wb\in \Vb.
	\end{aligned}
	\right.
\end{equation} 
For  $\ub,\vb\in\Vb$, we have 
\begin{align}\label{213-B}
	|\langle \Brm(\ub)-\Brm(\vb),\wb\rangle| \leq | b(\ub,\ub-\vb,\wb)| + |b(\ub-\vb,\vb,\wb)|  \leq \big[\|\ub\|_{4} \|\nabla(\ub-\vb)\|_{2} + \|\ub-\vb\|_{4} \|\nabla\vb\|_{2}\big]\|\wb\|_{4}
\end{align}
for all $\ub,\vb,\wb\in\Vb$.  Thus the operator $\Brm(\cdot):\Vb\to\Vb^{\prime}$ is locally Lipschitz. 
In addition, the operator $\Brm$ enjoys the following important orthogonality property (see \cite[Lemma 3.1, p. 404]{temam2012infinite}): 
\begin{align}\label{Au-orthogonal}
   (\Brm(\vb, \vb), \A\vb)= b(\vb,\vb,\A\vb)=0, \; \text{for any } \vb\in\Drm(\A).
\end{align}

Let us now define the operator $\Jcal(\vb) := -\mathcal{P}\diver(\Arm(\vb)\Arm(\vb))$.  Note that for $\vb\in\Xb$, we have
\begin{align*}
	\|\Jcal(\vb)\|_{\Xb^{\prime}} \leq C \|\vb\|^2_{\Xb},
\end{align*}
and hence the map $\Jcal(\cdot):\Xb\to\Xb^{\prime}$. For  $\ub,\vb\in\Xb$, we have  
\begin{align}\label{213-J}
	|\langle \Jcal(\ub)-\Jcal(\vb),\wb\rangle| & = \left| \frac12 \int_{\mathbb{T}^2}[\Arm(\ub -\vb)\Arm(\ub)+\Arm(\vb)\Arm(\ub - \vb)]:\Arm(\wb)\drm x \right| \nonumber
	\\&\leq C\big[\|\Arm(\ub)\|_{4}+\|\Arm(\vb)\|_{4}\big] \|\Arm(\ub - \vb)\|_{4} \|\Arm(\wb)\|_{2},
\end{align}
for all $\ub,\vb,\wb\in\Xb$.  Thus the operator $\Jcal(\cdot):\Xb\to\Xb^{\prime}$ is locally Lipschitz. 

Finally, we define the operator $\Kcal(\vb):= -\mathcal{P}\diver(|\Arm(\vb)|^{2}\Arm(\vb))$. It is immediate that $$\langle\mathcal{K}(\vb),\vb\rangle =\frac12\|\Arm(\vb)\|_{4}^{4}.$$ Note that for $\vb\in\Xb$, we have 
\begin{align*}
	\|{\Kcal}(\vb)\|_{\Xb^{\prime}} \leq C \|\vb\|^3_{\Xb},
\end{align*}
and hence the map $\Kcal(\cdot):\Xb\to\Xb^{\prime}$. For  $\ub,\vb\in\Xb$,  we infer 
\begin{align}\label{213-K}
	|\langle \Kcal(\ub)-\Kcal(\vb),\wb\rangle
		|& = \bigg| \frac12 \int_{\mathbb{T}^2}[\Arm(\ub-\vb):\Arm(\ub)+\Arm(\vb):\Arm(\ub-\vb)]\Arm(\ub):\Arm(\wb)\drm x 
	\nonumber\\ & \quad + \frac12 \int_{\mathbb{T}^2}|\Arm(\vb)|^2\Arm(\ub-\vb):\Arm(\wb)\drm x \bigg| \nonumber
	\\&\leq C\big[\|\Arm(\ub)\|_{4}^2+\|\Arm(\vb)\|_{4}^2\big] \|\Arm(\ub-\vb)\|_{4} \|\Arm(\wb)\|_{4},
\end{align}
for all $\ub,\vb,\wb\in\Xb$.  Thus the operator $\mathcal{K}(\cdot):\Xb\to\Xb^{\prime}$ is locally Lipschitz.

\subsection{Linear stochastic equation}
In order to establish global-in-time solution of system \eqref{stochastic-third-grade-fluids-equations}, we decompose system \eqref{stochastic-third-grade-fluids-equations} in  a linear equation which involves the noise and a random partial differential equation. Namely, we write $\vb=\ub+\zb$ such that $\zb$ solves the following stochastic linear system:
\begin{equation}\label{eqn_z}
	\left\{
	\begin{aligned}
		\drm(\Irm-\alpha_1\Delta)\zb  - \theta \Delta (\Irm-\alpha_1\Delta)\zb \drm t +(\Irm-\alpha_1\Delta)\zb \drm t &=\drm \mathrm{W},\\
        \z(0)&=\boldsymbol{0},
	\end{aligned}
	\right.
\end{equation}
 where $\theta>0$ is a given constant, $\Wrm$ is $\R^2$-valued two-sided trace-class $GG^{\ast}$-Wiener process with spatial mean zero and divergence-free, and $\ub$ solves the following non-linear random system:
 \begin{equation}\label{random-third-grade-fluids-equations}
	\left\{\begin{aligned}
		\partial_t(\ub-\alpha_1\Delta\ub)  -\nu \Delta \ub  +((\ub+\zb)\cdot \nabla)(\ub+\zb-\alpha_1\Delta\ub & -\alpha_1\Delta\zb) +\displaystyle\sum_{j=1}^2[\ub+\zb-\alpha_1\Delta\ub-\alpha_1\Delta\zb]^j\nabla [\ub+\zb]^j  
        \\ -(\alpha_1+\alpha_2)\diver[\Arm(\ub+\zb)^2] -\beta \diver[|\Arm(\ub+\zb)|^2\Arm(\ub+\zb)]
&= -\nabla {p} + \fb + \zb +  (\nu - \theta-\alpha_1)\Delta\zb + \theta\alpha_1 \Delta^2\z, \\ & \quad  && \hspace{-30mm} \text{in }  \mathbb{T}^2 \times (0,\infty),  \\
        \text{div}\;\ub& =0,&& \hspace{-30mm} \text{in }  \mathbb{T}^2 \times [0,\infty),	  \\
      \ub(x,0)&=\vb_0(x), \quad && \hspace{-30mm} \text{in } \mathbb{T}^2,
		\end{aligned}\right.
\end{equation}
with periodic boundary conditions. 
\begin{remark}
    Here, $\zb$ is divergence-free due to the assumptions on the noise $\Wrm$, so there is no need to explicitly include the condition $\mathrm{div}\;\zb=0$ in \eqref{eqn_z}. Therefore, we denote by $p$ the pressure term associated with $\ub$.
\end{remark}

In view of the factorization method, one can obtain the regularity of $\zb$ on a given stochastic basis $(\Omega,\mathcal{F},\{\mathcal{F}_{t}\}_{t\in [0,T]},\mathbb{P})$ with $\{\mathcal{F}_{t}\}_{t\in[0,T]}$ being the usual filtration. Particularly, we prove the following result for regularity of $\zb$ using the similar arguments as in \cite[Theorem 5.16]{DaZ}. 

\begin{proposition}\label{thm_z_alpha}
Suppose that $\Tr\left((-\Delta)^{3-2\gamma} GG^{\ast}\right)<\infty$ for $\gamma\in(0,\frac12)$. Then for $p\geq2$
	\begin{align}\label{Z-regularity}
\mathbb{E}\left[\sup_{t\in[0,T]}\|\z(t)\|^{p}_{\dot{\Hb}^{5}}\right] \leq (p-1)^{\frac{p}{2}}L^{p},
	\end{align}
	where $L\geq1$ depends on $\Tr\left((-\Delta)^{3-2\gamma} GG^{\ast}\right)$, $\gamma$ and $\theta$, and is independent of $p$.  In addition, for any given $\hat{c}>0$, we can choose
    $$\theta> \hat{c} + \frac{\| GG^{\ast}\|_{Op}}{\lambda_1},$$ where $\| GG^{\ast}\|_{Op}$ is the operator norm of operator $GG^*$ from $\Hb$ to $\Hb$ and $\lambda_1$ is the smallest eigenvalue of Stokes operator, such that the following exponential estimate hold:
    \begin{align}\label{Z-exponential-regularity}
        \Eb \left[ \exp\left\{2\hat{c} \int_0^t  \|\zb(s)\|^2_{\dot{\Hb}^3} \drm s \right\} \right]  <+\infty,
    \end{align}
    for all $t\in[0,T].$
\end{proposition}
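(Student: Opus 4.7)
The strategy is to reduce the linear system \eqref{eqn_z} to a standard Ornstein--Uhlenbeck SPDE and then exploit the Da Prato--Zabczyk factorization method together with Gaussian calculus. Setting $Y:=\Upsilon\zb$ and using that $\Upsilon$ commutes with $\A$, equation \eqref{eqn_z} reduces to
\[
 \drm Y+(\theta\A+I)Y\,\drm t=\drm\Wrm,\qquad Y(0)=\mathbf{0},
\]
whose mild solution is the stochastic convolution $Y(t)=\int_0^t S(t-s)\,\drm\Wrm(s)$ with $S(t):=e^{-t(\theta\A+I)}$. Since $\Upsilon^{-1}$ gains two derivatives, $\|\zb(t)\|_{\dot{\Hb}^5}\simeq\|Y(t)\|_{\dot{\Hb}^3}$, and it suffices to prove the analogous bounds for $Y$ in $\dot{\Hb}^3$.

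For \eqref{Z-regularity} I would invoke the factorization: fix an auxiliary exponent $\alpha\in(0,\tfrac12-\gamma)$ and write
\[
Y(t)=\frac{\sin\pi\alpha}{\pi}\int_0^t (t-s)^{\alpha-1}S(t-s)\,Y_\alpha(s)\,\drm s,\qquad Y_\alpha(s):=\int_0^s (s-r)^{-\alpha}S(s-r)\,\drm\Wrm(r).
\]
Combining It\^o's isometry, the semigroup smoothing $\|\A^\gamma e^{-r\theta\A}\|_{Op}\le C(\theta r)^{-\gamma}$, and the trace hypothesis $\Tr(\A^{3-2\gamma}GG^{\ast})<\infty$, one obtains
\[
\mathbb{E}\|Y_\alpha(s)\|^2_{\dot{\Hb}^3}\le C\,\Tr(\A^{3-2\gamma}GG^{\ast})\int_0^s r^{-2(\alpha+\gamma)}\,\drm r,
\]
uniformly bounded in $s\in[0,T]$ because $\alpha+\gamma<\tfrac12$. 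Since $Y_\alpha(s)$ is Gaussian in the Hilbert space $\dot{\Hb}^3$, the Burkholder--Davis--Gundy / Kahane--Khintchine inequality gives $\mathbb{E}\|Y_\alpha(s)\|^p_{\dot{\Hb}^3}\le (p-1)^{p/2}L_0^p$ for $p\ge 2$, with $L_0$ independent of $p$. Choosing $p>1/\alpha$ and applying Young's inequality to the factorization convolution yields the pathwise bound $\sup_{t\in[0,T]}\|Y(t)\|_{\dot{\Hb}^3}\le C_{\alpha,p,T}\|Y_\alpha\|_{L^p(0,T;\dot{\Hb}^3)}$, and integrating in expectation produces \eqref{Z-regularity}.

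For the exponential bound \eqref{Z-exponential-regularity}, I would apply It\^o's formula to $\|\zb(t)\|^2_{\dot{\Hb}^3}$ (justified via Galerkin truncation in the joint eigenbasis of $\A$ and $\Upsilon$) and the equivalent form $\drm\zb+(\theta\A+I)\zb\,\drm t=\Upsilon^{-1}\drm\Wrm$:
\[
\drm\|\zb\|^2_{\dot{\Hb}^3}+2\theta\|\zb\|^2_{\dot{\Hb}^4}\,\drm t+2\|\zb\|^2_{\dot{\Hb}^3}\,\drm t=2(\A^3\zb,\Upsilon^{-1}\drm\Wrm)+\Tr(\A^3\Upsilon^{-2}GG^{\ast})\,\drm t.
\]
Integrating, using Poincar\'e's inequality $\|\zb\|^2_{\dot{\Hb}^3}\le\lambda_1^{-1}\|\zb\|^2_{\dot{\Hb}^4}$, and discarding the non-negative terminal term yields $2\theta\lambda_1\int_0^T\|\zb\|^2_{\dot{\Hb}^3}\,\drm s\le M(T)+CT$, where $M$ is the stochastic integral and $C=\Tr(\A^3\Upsilon^{-2}GG^{\ast})$. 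Exponentiating reduces matters to controlling $\mathbb{E}\exp(\mu M(T))$ with $\mu=\hat c/(\theta\lambda_1)$. Since $\langle M\rangle_T\le 4\|GG^{\ast}\|_{Op}\alpha_1^{-2}\int_0^T\|\zb\|^2_{\dot{\Hb}^4}\,\drm s$, the Cauchy--Schwarz exponential-martingale trick $\mathbb{E}e^{\mu M}\le\{\mathbb{E}e^{2\mu M-2\mu^2\langle M\rangle}\}^{1/2}\{\mathbb{E}e^{2\mu^2\langle M\rangle}\}^{1/2}$ produces a self-bounding inequality on $\mathbb{E}\exp(2\hat c\int_0^T\|\zb\|^2_{\dot{\Hb}^3}\,\drm s)$; the hypothesis $\theta>\hat c+\|GG^{\ast}\|_{Op}/\lambda_1$ makes the self-term coefficient strictly less than one, closing the estimate.

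The main obstacle will be this exponential step: the $L^p$ factorization argument is essentially textbook (cf.\ \cite[Theorem 5.16]{DaZ}), but pinning down the precise threshold $\theta>\hat c+\|GG^{\ast}\|_{Op}/\lambda_1$ requires delicate tracking of constants. A cleaner alternative is to exploit the full Gaussianity: diagonalizing in the eigenbasis of $\A$ so that each Fourier mode of $\zb$ satisfies an independent scalar OU SDE reduces $\int_0^T\|\zb\|^2_{\dot{\Hb}^3}\,\drm s$ to a weighted sum of independent Gaussian quadratic forms, whose Laplace transforms admit the explicit determinantal form $\det(I-2\mu Q)^{-1/2}$; summing the mode-wise integrability conditions recovers the same threshold.
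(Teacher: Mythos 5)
Your Step I is a legitimate alternative route. The paper does not actually run the factorization formula: it estimates the increments $\mathbb{E}\|\Upsilon(\zb(t))-\Upsilon(\zb(s))\|_2^2$ and $\mathbb{E}\|(-\Delta)^{3/2}(\Upsilon(\zb(t))-\Upsilon(\zb(s)))\|_2^2$ to get H\"older continuity in time, upgrades to $p$-th moments by Gaussianity, and concludes with Kolmogorov's continuity criterion. Your factorization argument reaches the same $L^p(\Omega;\mathrm{L}^\infty)$ bound; just note that Young's inequality forces $p>1/\alpha>2$, so for $2\le p<1/\alpha$ you must pass through Jensen's inequality from a fixed higher moment, and you should check that the resulting constant can still be written as $(p-1)^{p/2}L^p$ with $L$ independent of $p$ (it can, since $(p_0-1)^{p/2}\le (p-1)^{p/2}(p_0-1)^{p/2}$ for $p\ge2$, absorbing $(p_0-1)^{1/2}$ into $L$).

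Step II is where the genuine gap lies. The paper applies It\^o's formula to $\|\Upsilon(\zb)\|_2^2$ and then \emph{completes the martingale term into a stochastic exponential}: writing $2\int_0^t(\Upsilon(\zb),\drm\Wrm)=\log\mathcal{E}(t)+2\int_0^t\|G^*\Upsilon(\zb)\|_2^2\,\drm s$ with $\mathcal{E}(t)=\exp\{2\int_0^t(\Upsilon(\zb),\drm\Wrm)-2\int_0^t\|G^*\Upsilon(\zb)\|_2^2\,\drm s\}$, a nonnegative supermartingale with $\Eb[\mathcal{E}(t)]\le1$. The compensator is absorbed \emph{linearly} into the dissipation via $2\int(\Upsilon(\zb),GG^*\Upsilon(\zb))\le\frac{2\|GG^*\|_{Op}}{\lambda_1}\int\|(-\Delta)^{1/2}\Upsilon(\zb)\|_2^2$, which is exactly what produces the threshold $\theta>\hat c+\|GG^*\|_{Op}/\lambda_1$ with no circularity. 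Your Cauchy--Schwarz route $\Eb e^{\mu M}\le\{\Eb e^{2\mu^2\langle M\rangle}\}^{1/2}$ has two problems. First, the resulting ``self-bounding inequality'' is of the form $X\le C\,X^{1/2}$ for $X=\Eb\exp(c_0\int\|\zb\|^2_{\dot{\Hb}^4})$, and this implies $X\le C^2$ only if you already know $X<\infty$; you need a stopping-time localization and a Fatou passage, which you do not supply. Second, closing the estimate requires $8\mu^2\|GG^*\|_{Op}\alpha_1^{-2}\le c_0$ with $\mu=c_0/(2\theta)$, i.e.\ a condition quadratic in $\theta$ (and involving $\alpha_1$), not the stated linear threshold; so your method proves \emph{a} sufficient condition on $\theta$ but not the one claimed. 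Finally, your proposed ``cleaner alternative'' of decoupling into independent scalar OU modes tacitly assumes that $GG^*$ commutes with $\A$ (is diagonal in the Stokes eigenbasis); the hypotheses only impose a trace condition on $(-\Delta)^{3-2\gamma}GG^*$, so the Fourier modes of $\zb$ are correlated in general and the mode-wise Laplace transforms do not factor. To repair Step II, replace the Cauchy--Schwarz trick by the supermartingale argument above.
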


\begin{remark}
    Note that $\dot{\Hb}^5$-regularity of $\zb$ in \eqref{Z-regularity} is required to obtain the $\dot{\Hb}^3$-regularity of $\ub$ (see Subsection \ref{sec-more-regularity} below). 
\end{remark}

	\begin{proof}[Proof of Proposition \ref{thm_z_alpha}]
    The proof is divided into two steps.
    \vskip 2mm
    \noindent
	\textbf{Step I:} \textit{Proof of \eqref{Z-regularity}.}	Let us recall that the unique stationary solution to \eqref{eqn_z} has the explicit form 
		\begin{align*}
			\Upsilon(\z(t)) =\int_{0}^{t}e^{-(t-r)} S(t-r) \drm\Wrm(r),
		\end{align*}
		where $S(t)=e^{\theta\Delta t}$ denotes an analytic semigroup generated by the operator $\theta\Delta$. By \cite[Section 2.1]{Liu+Rockner_Book_2015}, the Wiener process $\Wrm$ can be written as $\Wrm=\sum\limits_{k\in\N}\sqrt{c_{k}}\beta_k\eb_{k}$ for an orthogonal basis $\{\eb_{k}\}_{k\in\N}$ of $\Hb$ consisting of eigenfunctions of $GG^{\ast}$ with corresponding eigenvalues $c_{k}$, and the coefficient satisfy $\sum\limits_{k\in\N}c_{k}<\infty,$ where $\{\beta_k\}_{k\in\N}$ is a sequence of mutually independent standard real-valued Brownian motions. Also, in view of our assumption, we have (see \cite[Appendix C]{DaZ})		
        \begin{align}
            \Tr\left((-\Delta)^{3-2\gamma}GG^{\ast}\right)=\sum_{k=1}^{\infty}c_k\left\|(-\Delta)^{\frac32-\gamma}\eb_{k}\right\|^2_{2}<\infty, \;\; \text{ for } \; \gamma \in \left(0,\frac12\right).
        \end{align}
	 Then it holds for $\gamma\in\left(0,\frac12\right)$ and $t\geq s$,
		\begin{align*}
			&\mathbb{E}\left[\left\|\Upsilon(\z(t))-\Upsilon(\z(s))\right\|^2_{2}\right]
			\nonumber\\ 	& =\mathbb{E}\left[\left\|\int\limits_{s}^{t}e^{-(t-r)}S(t-r)\drm\Wrm(r)\right\|^2_{2}\right] + \mathbb{E}\left[\left\|\int\limits_{0}^{s}\left[e^{-(t-r)}S(t-r)-e^{-(s-r)}S(s-r)\right]\drm\Wrm(r)\right\|^2_{2}\right]
			\nonumber\\ 	& =\sum_{k=1}^{\infty}c_{k}\int\limits_{s}^{t}e^{-2(t-r)}\left\|S(t-r)\eb_{k}\right\|^2_{2}\drm r  + \sum_{k=1}^{\infty}c_{k}\int\limits_{0}^{s}\left\|\left[e^{-(t-r)}S(t-r)-e^{-(s-r)}S(s-r)\right]\eb_{k}\right\|^2_{2}\drm r
			\nonumber\\ 	& \leq \sum_{k=1}^{\infty}c_{k}\int\limits_{s}^{t}e^{-2(t-r)}\left\|S(t-r)\eb_{k}\right\|^2_{2}\drm r  + \sum_{k=1}^{\infty}c_{k}\int\limits_{0}^{s} \left[e^{- (t-r)} - e^{- (s-r)}\right]^2\left\|S(t-r)\eb_{k}\right\|^2_{2}\drm r
			\nonumber\\ & \quad + \sum_{k=1}^{\infty}c_{k}\int\limits_{0}^{s}e^{-2(s-r)}\left\|\left[S(t-r)-S(s-r)\right]\eb_{k}\right\|^2_{2}\drm r
			 \nonumber\\ 	& \leq C\left[  \Tr(GG^{\ast})\left\{ \int\limits_{s}^{t}e^{-2(t-r)}\drm r   +  \int\limits_{0}^{s} \left[e^{-(t-r)}-e^{-(s-r)}\right]^2 \drm r\right\} + \sum_{k=1}^{\infty}c_{k}\int\limits_{0}^{s}e^{-2(s-r)}\left\|\left[S(t-r)-S(s-r)\right]\eb_{k}\right\|^2_{2}\drm r\right]
			\nonumber\\ 	&  \leq C\left[ \Tr(GG^{\ast}) (t-s)^{2\gamma}   + \sum_{k=1}^{\infty}c_{k}\int\limits_{0}^{s}e^{-2(s-r)}\left\|\left(\int_{s-r}^{t-r}(-\theta\Delta) S(\tau) \drm\tau\right) \eb_{k}\right\|^2_{2} \drm r \right]
			\nonumber\\ 	&  \leq C\left[  \Tr(GG^{\ast}) (t-s)^{2\gamma} +  \Tr(GG^{\ast})\int\limits_{0}^{s}e^{-2(s-r)}\left|\int_{s-r}^{t-r}\frac{1}{\tau}\drm\tau\right|^2\drm r \right]
			\nonumber\\ 	&  \leq C \left[  (t-s)^{2\gamma} +  \int\limits_{0}^{s}e^{-2(s-r)}(s-r)^{-2\gamma}\left|\int_{s-r}^{t-r}\tau^{\gamma-1}\drm\tau\right|^2\drm r \right]
			\nonumber\\ 	&  =  C\left[ (t-s)^{2\gamma}   +  \frac{1}{\gamma^2}\int\limits_{0}^{s}e^{-2(s-r)}(s-r)^{-2\gamma}\left[(t-r)^{\gamma}-(s-r)^{\gamma}\right]^2\drm r \right]
			\nonumber\\ 	& \leq C  (t-s)^{2\gamma}  \left[ 1 +  \int\limits_{0}^{s}e^{-2(s-r)}(s-r)^{-2\gamma}\drm r \right]
			\nonumber\\ 	  	& \leq C  (t-s)^{2\gamma}  ,
		\end{align*}
		where the constant $C$ depends only on the semigroup $S(\cdot)$, $\Tr(GG^{\ast})$, $\theta$ and $\gamma$ but it is independent of time. Again, it holds for $\gamma\in(0,\frac12)$, $\xi\in(\gamma,\frac12)$ and $t\geq s$,
		\begin{align*}
		&\mathbb{E}\left[\left\|(-\Delta)^{\frac32}\Upsilon(\zb(t))-(-\Delta)^{\frac32}\Upsilon(\zb(s))\right\|^2_{2}\right]
		\nonumber\\ 	& =\mathbb{E}\left[\left\|\int\limits_{s}^{t}e^{-(t-r)}(-\Delta)^{\frac32} S(t-r)\drm\Wrm(r)\right\|^2_{2}\right] 
	+ \mathbb{E}\left[\left\|\int\limits_{0}^{s}\left[e^{-(t-r)}(-\Delta)^{\frac32} S(t-r)-e^{-(s-r)}(-\Delta)^{\frac32} S(s-r)\right]\drm\Wrm(r)\right\|^2_{2}\right]
		\nonumber\\ & =\sum_{k=1}^{\infty}c_{k}\int\limits_{s}^{t}e^{-2 (t-r)}\left\|(-\Delta)^{\frac32} S(t-r)\eb_{k}\right\|^2_{2}\drm r  
+\sum_{k=1}^{\infty}c_{k}\int\limits_{0}^{s}\left\|\left[e^{- (t-r)}(-\Delta)^{\frac32} S(t-r)-e^{- (s-r)}(-\Delta)^{\frac32} S(s-r)\right]\eb_{k}\right\|^2_{2}\drm r
		\nonumber\\ 	& \leq C  \bigg[ \sum_{k=1}^{\infty}c_{k}\int\limits_{s}^{t}e^{-2 (t-r)}\left\|(-\Delta)^{\frac32-\gamma}(-\Delta)^{\gamma} S(t-r)\eb_{k}\right\|^2_{2}\drm r  
		\nonumber\\ & \quad + \sum_{k=1}^{\infty}c_{k}\int\limits_{0}^{s} e^{-2 (t-r)}\left\|(-\Delta)^{\frac32- \gamma}(-\Delta)^{\gamma}\left[S(t-r)-S(s-r)\right]\eb_{k}\right\|^2_{2}\drm r
		\nonumber\\ & \quad + \sum_{k=1}^{\infty}c_{k}\int\limits_{0}^{s} \left[e^{- (t-r)}-e^{- (s-r)}\right]^2\left\|(-\Delta)^{\frac32-\gamma}(-\Delta)^{\gamma} S(s-r)\eb_{k}\right\|^2_{2}\drm r \bigg]
		\nonumber\\ 	& \leq C \bigg[ \Tr((-\Delta)^{3-2\gamma}GG^{\ast}) \int \limits_{s}^{t} e^{-2 (t-r)} (t-r)^{-2\gamma} \drm r
			+ \Tr((-\Delta)^{3 -2\gamma}GG^{\ast})\int\limits_{0}^{s} \left[e^{- (t-r)}-e^{- (s-r)}\right]^2(s-r)^{-2\gamma} \drm r
		\nonumber\\ & \quad  + \sum_{k=1}^{\infty}c_{k}\int\limits_{0}^{s}e^{-2 (t-r)}\left\|(-\Delta)^{\frac32 - \gamma}(-\Delta)^{\gamma}\left(\int_{s-r}^{t-r}\frac{\drm}{\drm\tau}S(\tau)\drm\tau\right)\eb_{k}\right\|^2_{2}\drm r \bigg]
		\nonumber\\ 	&  \leq  C\bigg[ (t-s)^{2(\xi-\gamma)}   + \sum_{k=1}^{\infty}c_{k}\int\limits_{0}^{s}e^{-2 (t-r)}\left\|(-\Delta)^{\frac32-\gamma}(-\theta\Delta)^{\gamma}\left(\int_{s-r}^{t-r}(\theta\Delta) S(\tau) \drm\tau\right) \eb_{k}\right\|^2_{2}\drm r \bigg]
		\nonumber\\ 	&  \leq C \bigg[   (t-s)^{2(\xi-\gamma)}  +  \int\limits_{0}^{s}e^{-2(t-r)}\left|\int_{s-r}^{t-r}\tau^{-1-\gamma}\drm\tau\right|^2\drm r \bigg]
		\nonumber\\ 	&  \leq C \bigg[ (t-s)^{2(\xi-\gamma)}   +  \int\limits_{0}^{s}e^{-2 (t-r)}(s-r)^{-2\xi}\left|\int_{s-r}^{t-r}\tau^{\xi-1-\gamma}\drm\tau\right|^2\drm r \bigg]
		\nonumber\\ 	&  \leq C \bigg[  (t-s)^{2(\xi-\gamma)}   +  \int\limits_{0}^{s}e^{-2 (t-r)}(s-r)^{-2\xi}\left[(t-r)^{\xi-\gamma}-(s-r)^{\xi-\gamma}\right]^2\drm r \bigg]
		\nonumber\\ 	&  \leq C \bigg[ (t-s)^{2(\xi-\gamma)}   +  (t-s)^{2(\xi-\gamma)}\int\limits_{0}^{s}e^{-2 (t-r)}(s-r)^{-2\xi}\drm r\bigg]
		\nonumber\\ 	&   \leq C   (t-s)^{2(\xi-\gamma)},
	\end{align*}	
		where the constant $C>0$ depends only on the semigroup $S(\cdot)$, $\Tr((-\Delta)^{3 - 2 \gamma}GG^{\ast})$, $\theta$, $\gamma$ and $\xi$ but it is independent of time, and we have also used the properties of semigroup $S(\cdot)$ from \cite[Theorems 5.2 and 6.13]{Pazy_1983}. Using Gaussianity (see for e.g. \cite[Subsection 5.4]{Papoulis_1984}), we have 
		\begin{align*}
			\mathbb{E}\left[\|\z(t)-\z(s)\|^p_{\dot{\Hb}^{5}}\right] \leq (p-1)^{\frac p2}\left(\mathbb{E}\left[\|\z(t)-\z(s)\|^2_{\dot{\Hb}^{5}}\right]\right)^{\frac p2}.
		\end{align*}
		By Kolmogorov's continuity criterion (see \cite[Theorem 3.3]{DaZ}), we obtain \eqref{Z-regularity} as desired.
        \vskip 2mm
        \noindent
        \textbf{Step II:} \textit{Proof of \eqref{Z-exponential-regularity}.}   Applying It\^o formula to the process $\|(\Irm-\alpha_1\Delta)\zb(\cdot)\|_2^2$ and integrating form $0$ to $t$, we obtain 
    \begin{align}
       &  \|(\Irm-\alpha_1\Delta)\zb(t)\|_2^2  + 2\theta \int_0^t  \|(-\Delta)^{\frac12}(\Irm-\alpha_1\Delta)\zb(s)\|_2^2 \drm s + 2 \int_0^t  \|(\Irm-\alpha_1\Delta)\zb(s)\|_2^2 \drm s
       \nonumber\\ & = 2 \int_0^t ((\Irm-\alpha_1\Delta)\zb(s) , \drm \Wrm (s)) +  \Tr (GG^*)t
       \nonumber\\ & = 2 \int_0^t ( (\Irm-\alpha_1\Delta)\zb(s) , \drm \Wrm (s)) \drm s - 2  \int_0^t \|G^{\ast} (\Irm-\alpha_1\Delta)\zb(s)\|^2_2 \drm s +  \Tr (GG^*)t
        + 2 \int_0^t \|G^{\ast}(\Irm-\alpha_1\Delta)\zb(s)\|^2_2 \drm s
       \nonumber\\ & = 2 \int_0^t ( (\Irm-\alpha_1\Delta)\zb(s) , \drm \Wrm (s))  \drm s - 2 \int_0^t \|G^{\ast} (\Irm-\alpha_1\Delta)\zb(s)\|^2_2  \drm s +  \Tr (GG^*)t
         \nonumber\\ & \quad + 2 \int_0^t ( (\Irm-\alpha_1\Delta)\zb(s),  GG^{\ast} (\Irm-\alpha_1\Delta)\zb(s)) \drm s
        \nonumber\\ & \leq 2 \int_0^t ( (\Irm-\alpha_1\Delta)\zb(s)  , \drm \Wrm (s)) \drm s  - 2 \int_0^t \|G^{\ast}(\Irm-\alpha_1\Delta)\zb(s)\|^2_2  \drm s +  \Tr (GG^*)t
         + 2\|GG^{\ast}\|_{Op} \int_0^t \|(\Irm-\alpha_1\Delta)\zb(s)\|_2^2 \drm s
         \nonumber\\ & \leq 2 \int_0^t ( (\Irm-\alpha_1\Delta)\zb(s) , \drm \Wrm (s)) \drm s - 2 \int_0^t \|G^{\ast}(\Irm-\alpha_1\Delta)\zb(s)\|^2_2  \drm s  +  \Tr (GG^*)t
         \nonumber\\ & \quad + \frac{2 \|GG^{\ast}\|_{Op}}{\lambda_1} \int_0^t \|(-\Delta)^{\frac12}(\Irm-\alpha_1\Delta)\zb(s)\|_2^2 \drm s,
    \end{align}
where $\|GG^{\ast}\|_{Op}$ denotes the operator norm of operator $GG*$ from $\Hb$ to $\Hb$.  This gives
    \begin{align}\label{expo-in-delta}
     & \exp\left\{2\left(\theta-\frac{\| GG^{\ast}\|_{Op}}{\lambda_1}\right) \int_0^t  \|(-\Delta)^{\frac12}(\Irm-\alpha_1\Delta)\zb(s)\|_2^2 \drm s \right\}
    \nonumber\\ &  \leq  \exp\left\{2 \int_0^t ( (\Irm-\alpha_1\Delta)\zb(s) , \drm \Wrm (s)) - 2 \int_0^t \|G^{\ast} (\Irm-\alpha_1\Delta)\zb(s)\|^2_2 \right\} e^{\Tr (GG^*)t}.
    \end{align}

Since $\mathcal{E}(t):= \exp\left\{2 \int_0^t ( (\Irm-\alpha_1\Delta)\zb(s) , \drm \Wrm (s)) - 2 \int_0^t \|G^{\ast} (\Irm-\alpha_1\Delta)\zb(s)\|^2_2 \right\}$  is a nonnegative local martingale, it follows by Fatou’s Lemma that $\mathcal{E}$ is a supermartingale (see \cite[pp. 253]{Liu+Rockner_Book_2015}), and since $\mathcal{E}(0)=1$, we have
    \begin{align}\label{eqn-supermartingale}
        \Eb\left[\mathcal{E}(t)\right] \leq \Eb\left[\mathcal{E}(0)\right] =1.
    \end{align}
    Hence by \eqref{expo-in-delta} and \eqref{eqn-supermartingale}, we have 
    \begin{align}
        \Eb \left[ \exp\left\{2\left(\theta-\frac{\| GG^{\ast}\|_{Op}}{\lambda_1}\right) \int_0^t  \|(-\Delta)^{\frac12}(\Irm-\alpha_1\Delta)\zb(s)\|_2^2 \drm s \right\} \right] \leq  e^{\Tr (GG^*)t}.
    \end{align}
    Finally, for any given $\hat{c}>0$, one can choose $\theta$ sufficiently large satisfying $$\hat{c}< \theta-\frac{\| GG^{\ast}\|_{Op}}{\lambda_1},$$ and obtain
    \begin{align}
        \Eb \left[ \exp\left\{2\hat{c} \int_0^t  \|\zb(s)\|^2_{\dot{\Hb}^3} \drm s \right\} \right] \leq  e^{\Tr (GG^*)t}<+\infty.
    \end{align}
     This completes the proof.
	\end{proof}

\subsection{Control problem}
Our main goal is to control the solution of the system \eqref{stochastic-third-grade-fluids-equations}
by a stochastic distributed force $\fb$. The control variables $\fb$ belong to the set $\mathcal{F}_{ad}$ of admissible controls, which
is defined as a nonempty compact convex subset of $\Lrm^p(\Omega;\mathrm{L}^2(0,{T};\dot{\Hb}^{1}(\mathbb{T}^2)))$, $p>4$.   Let $\vb$ be the solution to system \eqref{stochastic-third-grade-fluids-equations} corresponding to control $\fb\in \mathcal{F}_{ad}$. We consider the cost functional $ \mathrm{J}: \mathcal{F}_{ad}  \to \mathbb{R}^+$ given by 
\begin{align}\label{eqn-cost-functional}
    \mathrm{J}(\fb,\vb  )=\frac12 \mathbb{E}\bigg[ \int_0^{T} \|\vb(t)-\vb_d(t)\|^2_2 \drm t\bigg] + \frac{\lambda}{2}\mathbb{E}\bigg[\int_0^{T}\|\fb(t)\|_2^2\drm t\bigg],
\end{align}
where $\vb_d \in \Lrm^2(\Omega;\mathrm{L}^2(0,{T};\dot{\Lb}^2(\mathbb{T}^2)))$ corresponds to a desired target field and any $\lambda> 0$. The control problem reads
\begin{align}\label{eqn-control-problem}
    \min_{\fb\in\mathcal{F}_{ad}}\bigg\{\mathrm{J}(\fb,\vb) \; : \; \vb \text{ is the solution of \eqref{stochastic-third-grade-fluids-equations} with force } \fb \bigg\}.
\end{align}
\begin{remark}
 In this article, the Lagrangian $\mathfrak{L}$ is given by 
    \begin{align*}
        \mathfrak{L}(\cdot,\fb,\vb)=\frac12 \|\vb-\vb_d\|^2_2 + \frac{\lambda}{2}\|\fb\|_2^2.
    \end{align*}
    Therefore, we have
    	\begin{align*}
    		\Eb\left[\int_0^{T} (\nabla_{\vb}\mathfrak{L}(t,\fb(t),\vb(t)), \yb(t))\drm t\right] & = \Eb \left[\int_0^{T} (\yb(t), \vb(t)-\vb_d(t))\drm t\right],\\
    	\Eb	\left[\int_0^{T} (\nabla_{\fb}\mathfrak{L}(t,\fb(t),\vb(t)), \gb(t))\drm t\right] & = \lambda \Eb\left[\int_0^{T} (\gb(t), \fb(t))\drm t\right],
    	\end{align*}
    	for any $\yb \in \Lrm^2(\Omega; \mathrm{L}^{\infty}(0,{T};\Drm(\A^{\frac{3}{2}})))$ and $\gb\in  \mathcal{F}_{ad}$.
\end{remark}
\subsection{Main results} 
Our main result demonstrates the existence of a solution to the control problem and formulates the first-order optimality conditions.
\begin{theorem}\label{main-thm1}
    Assume that $\vb_0\in\Drm(\A^{\frac{3}{2}})$. Then the control problem \eqref{eqn-control-problem} admits, at least, one optimal solution 
    \begin{align}
        (\widetilde{\fb}, \widetilde{\vb})\in \mathcal{F}_{ad}\times  \Lrm^2(\Omega;\mathrm{L}^{\infty}(0,{T}; \Drm(\A^{\frac{3}{2}}))),
    \end{align}
    where $\widetilde{\vb}$ is the unique solution of \eqref{stochastic-third-grade-fluids-equations} with $\fb=\widetilde{\fb}$. Moreover, there exists a unique solution $\pb$ of \eqref{eqn-adjoint-pi} with $\gb=\nabla_{\vb}\mathfrak{L}(\cdot,\widetilde{\fb},\widetilde{\vb}),$ such that if $\widetilde{\mfrk}$ is the solution of \eqref{eqn-linearized-pi} for $\vb=\widetilde{\vb}$ and $\boldsymbol{\psi}=\boldsymbol{\psi}-\widetilde{\fb}$, the following duality property 
    \begin{align}
        \Eb\left[\int_0^{T} (\boldsymbol{\psi}(t) - \widetilde{\fb}(t), \widetilde{\pb}(t))\drm t\right] = \Eb\left[\int_0^{T} (\nabla_{\vb}\mathfrak{L}(t,\widetilde{\fb}(t), \widetilde{\vb}(t)), \widetilde{\mfrk}(t))\drm t\right],
    \end{align}
    and the following optimality condition hold 
    \begin{align}\label{oocc}
        \Eb \left[ \int_0^{T} (\boldsymbol{\psi}(t)-\widetilde{\fb}(t), \widetilde{\pb}(t) + \nabla_{\fb}\mathfrak{L}(t,\widetilde{\fb}(t), \widetilde{\vb}(t)) )\drm t\right]  \geq 0.
    \end{align}
\end{theorem}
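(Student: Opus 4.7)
The plan is to use the direct method of the calculus of variations, combined with a standard Lagrangian/duality argument, leveraging all the preparatory results already established in the preceding sections. I would organize the proof into three blocks: existence of a minimizer, derivation of the duality relation between the linearized and adjoint solutions, and derivation of the necessary first-order optimality condition.

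\textbf{Existence of an optimal pair.} First, I would take a minimizing sequence $\{\fb_n\}_{n\in\N}\subset \mathcal{F}_{ad}$ such that $\mathrm{J}(\fb_n,\vb_n)\to \inf_{\fb\in\mathcal{F}_{ad}}\mathrm{J}(\fb,\vb)$, where $\vb_n$ is the unique global-in-time solution of \eqref{stochastic-third-grade-fluids-equations} with force $\fb_n$ furnished by the result of Section~\ref{section-3}. Since $\mathcal{F}_{ad}$ is a compact subset of $\Lrm^p(\Omega;\mathrm{L}^2(0,T;\dot{\Hb}^1))$, I can extract (along a subsequence, not relabelled) a limit $\widetilde{\fb}\in\mathcal{F}_{ad}$ with $\fb_n\to \widetilde{\fb}$ strongly in this space. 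The uniform moment bounds coming from \eqref{vb-regularity} and the $\Lrm^2(\Omega;\Lrm^\infty(0,T;\Drm(\A^{3/2})))$-estimate of the state give compactness of $\{\vb_n\}$, and the pathwise stability estimate for the state equation (proved in Section~\ref{section-3} via the decomposition $\vb=\ub+\zb$) allows me to identify the limit as $\widetilde{\vb}$, the unique solution associated with $\widetilde{\fb}$. Lower semicontinuity of the quadratic cost $\mathrm{J}$, together with strong/weak convergence of $\fb_n,\vb_n$ in the relevant topologies, yields $\mathrm{J}(\widetilde{\fb},\widetilde{\vb})\le \liminf_n \mathrm{J}(\fb_n,\vb_n)$, so $(\widetilde{\fb},\widetilde{\vb})$ is an optimal pair.

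\textbf{Duality relation.} Existence and uniqueness of the adjoint $\widetilde{\pb}$ corresponding to the source $\gb=\nabla_{\vb}\mathfrak{L}(\cdot,\widetilde{\fb},\widetilde{\vb})=\widetilde{\vb}-\vb_d$ comes from Section~\ref{Sec-adjoint}. To obtain the duality identity, I would apply an It\^o-type product rule to $(\widetilde{\mfrk}(t),\widetilde{\pb}(t))$ where $\widetilde{\mfrk}$ solves the linearized equation \eqref{eqn-linearized-pi} about $\widetilde{\vb}$ with direction $\boldsymbol{\psi}-\widetilde{\fb}$. The cross nonlinear/transport terms should cancel pairwise by construction of the adjoint system, the boundary term at $t=0$ vanishes because $\widetilde{\mfrk}(0)=\mathbf{0}$, and the boundary term at $t=T$ vanishes by the terminal condition on $\widetilde{\pb}$. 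What remains is exactly
\[
\Eb\!\left[\int_0^{T}(\boldsymbol{\psi}(t)-\widetilde{\fb}(t),\widetilde{\pb}(t))\,\drm t\right]=\Eb\!\left[\int_0^{T}(\nabla_{\vb}\mathfrak{L}(t,\widetilde{\fb}(t),\widetilde{\vb}(t)),\widetilde{\mfrk}(t))\,\drm t\right].
\]
Making this rigorous is the delicate step, because the third-grade nonlinearities $\Jcal,\Kcal$ and the transport terms in $(\vb\cdot\nabla)(\Irm-\alpha_1\Delta)\vb$ involve third-order derivatives. I would justify it by approximation (smoothing the test functions along the eigenbasis $\{\wb_i\}$ of the Stokes operator) and pass to the limit using the integrability \eqref{vb-regularity} and the regularity of $\widetilde{\mfrk},\widetilde{\pb}$ established in Sections~\ref{section-4} and~\ref{Sec-adjoint}.

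\textbf{Necessary optimality condition.} For any $\boldsymbol{\psi}\in\mathcal{F}_{ad}$ and $\eps\in(0,1]$, convexity of $\mathcal{F}_{ad}$ gives $\fb_\eps:=\widetilde{\fb}+\eps(\boldsymbol{\psi}-\widetilde{\fb})\in\mathcal{F}_{ad}$. Optimality of $\widetilde{\fb}$ yields $\mathrm{J}(\fb_\eps,\vb_\eps)\ge \mathrm{J}(\widetilde{\fb},\widetilde{\vb})$. Dividing by $\eps$, letting $\eps\to 0^+$, and using the G\^ateaux differentiability of the control-to-state mapping proved in Section~\ref{Sec-Gateaux-differentiability} (which identifies the derivative with $\widetilde{\mfrk}$), a chain-rule computation on the quadratic cost \eqref{eqn-cost-functional} gives
\[
\Eb\!\left[\int_0^{T}(\nabla_{\vb}\mathfrak{L}(t,\widetilde{\fb},\widetilde{\vb}),\widetilde{\mfrk}(t))\,\drm t\right]+\Eb\!\left[\int_0^{T}(\nabla_{\fb}\mathfrak{L}(t,\widetilde{\fb},\widetilde{\vb}),\boldsymbol{\psi}(t)-\widetilde{\fb}(t))\,\drm t\right]\ge 0.
\]
Substituting the duality relation into the first term immediately produces \eqref{oocc}.

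\textbf{Main obstacle.} The analytically delicate step is the duality identity, because it couples the linearized state equation and the adjoint via high-order transport terms. Even though the linear and adjoint problems have been shown to be well-posed separately, one has to justify that the cross terms coming from $(\vb\cdot\nabla)(\Irm-\alpha_1\Delta)(\cdot)$ and the quasilinear dissipation $\Kcal$ integrate by parts cleanly enough to cancel; this is where the exponential integrability \eqref{vb-regularity} of $\|\vb\|_{\dot{\Hb}^3}^2$ becomes essential to close the estimates without a stopping time, marking the crucial improvement over \cite{Tahraoui+Cipriano_2025_ESAIM}.
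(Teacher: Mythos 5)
Your proposal is correct and follows essentially the same route as the paper: a minimizing sequence plus compactness of $\mathcal{F}_{ad}$ and weak lower semicontinuity for existence, a product-rule/duality identity between the Galerkin approximations of the linearized and adjoint systems (your "smoothing along the Stokes eigenbasis" is exactly the paper's finite-dimensional pairing of $\mfrk_n$ with $\pb_n$, passed to the limit via the $\Lrm^2(\Omega;\mathrm{L}^2(0,T;\Drm(\A)))$ weak convergences), and the G\^ateaux differentiability combined with the duality relation for the first-order condition. One minor remark: since the linearized and adjoint equations carry no stochastic integral, the paper uses the ordinary pathwise Leibniz rule rather than an It\^o product rule, so no correction term needs to be discussed.
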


\section{Existence and uniqueness of global solutions of state equation \texorpdfstring{\eqref{stochastic-third-grade-fluids-equations}}{}}\label{section-3}
In this section, we establish the existence of unique global solution to system \eqref{stochastic-third-grade-fluids-equations}. Taking projection $\mathcal{P}$ to system \eqref{stochastic-third-grade-fluids-equations}, we obtain
\begin{equation}\label{STGF-Projected}
    \left\{
    \begin{aligned}
         \drm\Upsilon(\vb)  +
         \biggl\{\nu \A \vb + \Pcal[(\vb\cdot\nabla)\Upsilon(\vb)]  + \mathcal{P}\biggl[\displaystyle\sum_{j=1}^2 [\Upsilon(\vb)]^j \nabla \vb^j\biggr] -(\alpha_1+\alpha_2)\Jcal(\vb) 
  -\beta \Kcal(\vb)\biggr\}\drm t & = \mathcal{P} \fb \drm t + \drm \Wrm, 
  \\ & \;\; \text{ for a.e. } t\in(0,T)  \\
\vb(0)& = \vb_0,
    \end{aligned}
    \right.
\end{equation}
in $\Hb$.  Let us first provide the definition of a unique global strong solution in the probabilistic sense to the system \eqref{stochastic-third-grade-fluids-equations}.
\begin{definition}\label{def-Solution}
	 A $\Drm(\A)$-valued $\{\mathscr{F}_t\}_{t\geq 0}$-adapted stochastic process $\vb(\cdot)$ is called a \emph{strong solution} to the system \eqref{stochastic-third-grade-fluids-equations} if the following conditions are satisfied: 
	\begin{enumerate}
		\item [(i)] the process $\{ \vb(t)\}_{t\geq 0}$ has trajectories in 
        $\mathrm{C}([0,T];\Drm(\A))\cap\mathrm{L}^{\infty}(0,T;\Drm(\A^{\frac32}))$, $\mathbb{P}$-a.s.,
		\item [(ii)] the following equality holds for every $t\in [0, T ]$ and for any  $\psi\in\Hb$:
\begin{align}\label{4.5}
(\Upsilon(\vb(t)),\psi)&=(\Upsilon(\vb_0),\psi)-\int_0^t\langle\nu \A\vb(s) - (\alpha_1+\alpha_2) \Jcal(\vb(s)) + \beta\Kcal(\vb(s)),\psi\rangle\drm s +\int_0^t b(\vb(s),\psi(s),\Upsilon(\vb(s))) \drm s \nonumber \\ &\quad - \int_0^t b(\psi(s), \vb(s),\Upsilon(\vb(s)))  \drm s + \int_0^t(\fb(s) ,\psi)\drm s + (\Wrm(t),\psi),\ \mathbb{P}\text{-a.s.} 
\end{align}	
	\end{enumerate}
\end{definition}

\begin{theorem}\label{Wellposedness-state}
    Suppose that $\Tr\left((-\Delta)^{3-2\gamma} GG^{\ast}\right)<\infty$ for $\gamma\in(0,\frac12)$,  $\vb_0\in \Drm(\A^{\frac32})$ and $\fb\in\Lrm^2(\Omega;\Lrm^2(0,T;\dot{\Hb}^1(\Tb^2)))$. Then, there exists a unique solution $\vb$ to system \eqref{stochastic-third-grade-fluids-equations} in the sense of Definition \ref{def-Solution}. In addition, the solution $\vb\in\mathrm{L}^{p}(\Omega;\mathrm{L}^{\infty}(0,T;\Drm(\A^{\frac32})))$, $p\geq2$, and for any given $\hat{c}>0$
    \begin{align}\label{eqn-vb-exponential-moments}
   \Eb\bigg[\exp\bigg\{\hat{c}\int_0^t\|\vb(s)\|^{2}_{\dot{\Hb}^3} \drm s \bigg\}\bigg] <+\infty,
\end{align}
for all $t\in[0,T]$.
\end{theorem}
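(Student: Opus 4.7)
The plan is to exploit the decomposition $\vb=\ub+\zb$ introduced in Section \ref{section-2}: Proposition \ref{thm_z_alpha} already furnishes $\zb\in\Lrm^p(\Omega;\mathrm{L}^\infty(0,T;\dot{\Hb}^5))$ for every $p\geq 2$, together with the exponential bound \eqref{Z-exponential-regularity} for any prescribed constant $\hat c>0$, provided $\theta$ is chosen large enough. It therefore suffices to solve, pathwise in $\omega$, the deterministic random PDE \eqref{random-third-grade-fluids-equations} for $\ub$, whose forcing involves only $\zb$ (and its derivatives up to order four) and the control $\fb$; setting $\vb=\ub+\zb$ and checking that $\Upsilon(\vb)$ satisfies \eqref{4.5} then produces a strong solution in the sense of Definition \ref{def-Solution}.

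\textbf{Pathwise construction of $\ub$.} I would implement a Galerkin approximation $\ub^N$ along the eigenbasis $\{\wb_i\}_{i\in\mathbb{N}}$ of $\A$, whose coefficients satisfy a finite-dimensional ODE driven by the frozen paths of $\zb$ and $\fb$; global solvability of this ODE follows from the a priori bounds described below. These bounds are obtained at three successive levels, testing the projected equation against $\ub^N$, $\A\ub^N$, and $\A^2\ub^N$, in order to control $\ub^N$ respectively in $\mathrm{L}^\infty(0,T;\Hb)\cap \Lrm^2(0,T;\Vb)$, in $\mathrm{L}^\infty(0,T;\Drm(\A^{1/2}))\cap \Lrm^2(0,T;\Drm(\A))$, and ultimately in $\mathrm{L}^\infty(0,T;\Drm(\A^{3/2}))$. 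The key algebraic facts used at each stage are the cancellations \eqref{b0} and \eqref{Au-orthogonal} for the convective term, the coercivity $\langle\Kcal(\ub),\ub\rangle=\tfrac12\|\Arm(\ub)\|_4^4$ coming from the cubic stress term, and the locally Lipschitz bounds \eqref{213-B}--\eqref{213-K}. Contributions involving $\zb$ are absorbed by its pathwise $\dot{\Hb}^5$ bounds from Proposition \ref{thm_z_alpha} and Young's inequality. Passing to the limit $N\to\infty$ follows along the lines of \cite{Cipriano+Didier+Guerra_2021,Tahraoui+Cipriano_2024}.

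\textbf{Main obstacle and closure.} The delicate point is the top-level estimate. Tracking third derivatives of $(\vb\cdot\nabla)\Upsilon(\vb)$ and of $\diver[|\Arm(\vb)|^2\Arm(\vb)]$ produces cross terms that cannot be absorbed by dissipation alone and generates a pathwise Gronwall inequality of the form
\begin{align*}
\|\ub(t)\|_{\Drm(\A^{3/2})}^2 \leq C(\omega)+\int_0^t \bigl(C_1+C_2\|\zb(s)\|_{\dot{\Hb}^3}^2\bigr)\|\ub(s)\|_{\Drm(\A^{3/2})}^2\,\drm s.
\end{align*}
Iterating yields
\begin{align*}
\|\ub(t)\|_{\Drm(\A^{3/2})}^2 \leq C(\omega)\exp\Bigl\{C_1 T+C_2\int_0^T\|\zb(s)\|_{\dot{\Hb}^3}^2\,\drm s\Bigr\},
\end{align*}
and, for any fixed $p\geq 2$ and any prescribed $\hat c>0$, tuning $\theta$ in \eqref{Z-exponential-regularity} so that $\hat c+pC_2<\theta-\|GG^{\ast}\|_{Op}/\lambda_1$ delivers simultaneously $\ub\in\Lrm^p(\Omega;\mathrm{L}^\infty(0,T;\Drm(\A^{3/2})))$ and the exponential moment \eqref{eqn-vb-exponential-moments} after recombining $\vb=\ub+\zb$ and using the analogous exponential estimate for $\zb$. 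Uniqueness is proved by a standard difference argument: for two solutions $\vb_1,\vb_2$ the additive noise cancels, so It\^o's formula applied to $\|\Upsilon(\vb_1-\vb_2)\|_2^2$ combined with \eqref{213-B}--\eqref{213-K} and the $\mathrm{L}^\infty(0,T;\Drm(\A^{3/2}))$ regularity just obtained yields a Gronwall closure whose coefficient is pathwise integrable.
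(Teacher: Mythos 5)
Your overall architecture matches the paper's: the Ornstein--Uhlenbeck decomposition $\vb=\ub+\zb$, a pathwise Galerkin scheme for the random PDE \eqref{random-third-grade-fluids-equations}, and a hierarchy of energy estimates obtained by testing against $\ub_n$, $\A\ub_n$ and $\A^2\ub_n$ (the paper also tests against $\partial_t\ub_n$ to control $\partial_t\Upsilon(\ub_n)$ in $\mathrm{L}^2(0,T;\Vb')$, needed for the compactness step). However, there is a genuine gap at exactly the point you flag as "the delicate point": the top-level $\dot{\Hb}^3$ estimate does \emph{not} close as the linear Gronwall inequality you write down. The worst terms, e.g.\ $\int_{\Tb^2}\mathfrak{D}^1(\ub_n+\zb)\cdot\mathfrak{D}^3(\ub_n+\zb)\cdot\mathfrak{D}^3(\ub_n+\zb)\,\drm x$ coming from $(\vb\cdot\nabla)\Upsilon(\vb)$ and from $\diver[|\Arm(\vb)|^2\Arm(\vb)]$, require $\|\nabla(\ub_n+\zb)\|_\infty$, and in 2D this is \emph{not} controlled by $\|\ub_n\|_{\dot{\Hb}^2}$ (the embedding $\Hb^1\hookrightarrow\Lrm^\infty$ fails). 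Using the borderline inequality \eqref{embedding-eps} and interpolation one only gets a coefficient $\frac{C}{\sqrt{\varepsilon_0}}\|\ub_n+\zb\|_{\dot{\Hb}^2}^{1-\varepsilon_0}\|\ub_n+\zb\|_{\dot{\Hb}^3}^{\varepsilon_0}$, so the differential inequality for $\mathbf{Y}=1+\|\A\ub_n\|_2^2+\alpha_1\|\A^{3/2}\ub_n\|_2^2$ is superlinear, of the type $\mathbf{Y}'\lesssim \frac{C}{\sqrt{\varepsilon_0}}[\cdots]\mathbf{Y}^{1+\varepsilon_0/2}$; a crude bound $\|\nabla\ub_n\|_\infty\lesssim\|\ub_n\|_{\dot{\Hb}^3}$ would instead give a Riccati inequality and only local-in-time control. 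The paper closes this by the Busuioc-type device of integrating $\{\mathbf{Y}\}^{-\varepsilon_0/2}$ and choosing $\varepsilon_*$ small (exploiting that $\varepsilon_*^{-1/2}\{\mathbf{Y}(0)\}^{-\varepsilon_*/2}\to\infty$ as $\varepsilon_*\to0$), which produces the bound \eqref{H3-regularity}. This idea is absent from your proposal.

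The omission also breaks your closure of the exponential moment. Even if your linear Gronwall inequality held, it would give $\sup_t\|\ub(t)\|^2_{\Drm(\A^{3/2})}\leq C(\omega)\exp\{C_2\int_0^T\|\zb\|^2_{\dot{\Hb}^3}\drm s\}$, so that $\exp\{\hat c\int_0^T\|\ub\|^2_{\dot{\Hb}^3}\drm s\}$ is a \emph{double} exponential of $\int_0^T\|\zb\|^2_{\dot{\Hb}^3}\drm s$; no choice of $\theta$ in \eqref{Z-exponential-regularity} controls that, since $\zb$ only has single-exponential square-integrability (Gaussian tails). The whole point of the paper's superlinear ODE argument is that the resulting bound \eqref{H3-regularity} on $\|\ub\|_{\dot{\Hb}^3}$ is \emph{deterministic} (it depends only on $\vb_0$ and $\varepsilon_*$), so the factor $\exp\{2\hat c\int_0^t\|\ub\|^2_{\dot{\Hb}^3}\drm s\}$ can be pulled out of the expectation and \eqref{eqn-vb-exponential-moments} reduces to \eqref{Z-exponential-regularity} alone. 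Finally, your uniqueness sketch is too optimistic: testing the difference equation with $\mathfrak{U}=\ub_1-\ub_2$ at the $\Hb\cap\Vb$ level produces the term $\int\mathfrak{U}\cdot\nabla\Arm(\ub_2+\zb):\Arm(\mathfrak{U})$, which again cannot be bounded by $C\|\nabla\mathfrak{U}\|_2^2$ times an integrable function (it needs $\|\mathfrak{U}\|_\infty$); the paper resorts to an Osgood/Yudovich-type argument via the $p$-dependent Sobolev constant \eqref{Best-constant-sobolev-embedding}, yielding a sublinear Gronwall inequality in $\{\mathbf{U}\}^{(1-\sigma)/(\sigma+3)}$ and a contradiction argument. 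These two devices — the $\varepsilon_0$-trick for the global $\dot{\Hb}^3$ bound and the Osgood argument for uniqueness — are the substantive content of the proof and are missing from the proposal.
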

\begin{remark}
    Note that we are considering deterministic initial data, that is, $\vb_0\in \Drm(\A^{\frac32})$, but one can also take 
$\vb_0\in \Lrm^2(\Omega; \Drm(\A^{\frac32}))$ under an appropriate condition, which could help in obtaining \eqref{eqn-vb-exponential-moments}. 
\end{remark}

\subsection{Proof of Theorem \ref{Wellposedness-state}}

As we have already discussed that we prove the existence and uniqueness of strong solution $\vb$ to system \eqref{STGF-Projected} by decomposition method, that is, $\vb=\ub+\zb$ where $\ub$ and $\zb$ are the unique solutions of systems \eqref{random-third-grade-fluids-equations} and \eqref{eqn_z}, respectively. We already know the existence of a unique regular solution to system \eqref{eqn_z}, see Proposition \ref{thm_z_alpha}. For $\vb_0\in \Drm(\A^{\frac{3}{2}})$, $\fb\in\Lrm^2(\Omega;\Lrm^2(0,T;\dot{\Hb}^{1}(\Tb^2)))$ and $\Tr\left((-\Delta)^{3-2\gamma} GG^{\ast}\right)<\infty$, $\gamma\in(0,\frac12)$ (see Proposition \ref{thm_z_alpha}), we have for $p\geq 2$
\begin{align}\label{pathwise-regularity}
       \|\A^{\frac{3}{2}}\vb_0\|^2_{2} +   \int_0^T\|\fb(t)\|_{\dot{\Hb}^{1}}^2 \drm t + \sup_{t\in[0,T]}\|\zb(t)\|_{\dot{\Hb}^{5}}^p < +\infty,   \;\;\Pb\text{-a.s.},
\end{align}
that is, there exists a set $\widetilde{\Omega}\subset \Omega$ with $\Pb(\widetilde{\Omega})=1$ such that \eqref{pathwise-regularity} holds for each $\omega\in\widetilde{\Omega}$.

Let us choose and fix $T > 0$ and $\omega\in\widetilde{\Omega}$, and we show the existence of a unique solution to random system \eqref{random-third-grade-fluids-equations} for each $\omega\in\widetilde{\Omega}$ by treating system \eqref{random-third-grade-fluids-equations} as deterministic with given data $\vb_0$, $\fb$ and $\zb$ satisfying \eqref{pathwise-regularity}. Taking projection $\mathcal{P}$ to system \eqref{random-third-grade-fluids-equations}, we obtain
\begin{equation}\label{CTGF-Projected}
    \left\{
    \begin{aligned}
         \partial_t\Upsilon(\ub)  +\nu \A \ub + \Pcal[((\ub+\zb)\cdot\nabla)\Upsilon(\ub+\zb)] & + \mathcal{P}\left[\displaystyle\sum_{j=1}^2 (\Upsilon(\ub+\zb))^j \nabla (\ub+\zb)^j\right] 
  -(\alpha_1+\alpha_2)\Jcal(\ub+\zb) 
 \\  -\beta \Kcal(\ub+\zb) & = \mathcal{P} \fb  +  \zb +  (\theta+\alpha_1-\nu)\A\zb + \theta\alpha_1 \A^2\z, \;\; \text{ for a.e. } t\in(0,T)  \\
\ub(0)& = \vb_0,
    \end{aligned}
    \right.
\end{equation}
in $\Hb$.  We will show the existence of unique solution to system \eqref{CTGF-Projected} in the following sense:
\begin{definition}\label{def-ranndom-TGF}
   A function  $\ub \in  \mathrm{C}([0,T]; \Drm(\A))\cap \mathrm{L}^{\infty}(0,T;\Drm(\A^{\frac32}))$  with $\partial_t\ub \in\mathrm{L}^{2}(0,T;\Drm(\A)),$ 
    is called a \emph{strong solution} to system \eqref{CTGF-Projected}, if for $\vb_0$, $\fb$ and $\zb$ satisfying \eqref{pathwise-regularity}, it satisfies 
	\begin{itemize}
		\item [(i)] for any $\psi\in \Hb,$ 
		\begin{align}\label{W-randomTGF}
\left\langle\partial_t\Upsilon(\ub(t)),\psi\right\rangle &=-\int_0^t\langle\nu \A\ub(s) - (\alpha_1+\alpha_2) \Jcal(\ub(s)+\zb(s)) - \beta\Kcal(\ub(s)+\zb(s)),\psi\rangle\drm s
            \nonumber \\ &\quad  +\int_0^t b(\ub(s)+\zb(s),\psi(s),\Upsilon(\ub(s)+\zb(s))) \drm s  - \int_0^t b(\psi(s), \ub(s)+\zb(s),\Upsilon(\ub(s)+\zb(s)))  \drm s 
            \nonumber \\ &\quad  + \int_0^t(\fb(s)  +  \zb(s) +  (\theta+\alpha_1-\nu)\A\zb(s) + \theta\alpha_1 \A^2\z(s) ,\psi)\drm s,
		\end{align}
		for a.e. $t\in[0,T];$
		\item [(ii)] the initial data:
		$$\ub(0)=\vb_0 \ \text{ in }\ \Drm(\A^{\frac32}).$$
	\end{itemize}
\end{definition}

\begin{theorem}\label{solution}
	There exists a unique  solution $\ub$ to system \eqref{CTGF-Projected} in the sense of Definition \ref{def-ranndom-TGF}. 
\end{theorem}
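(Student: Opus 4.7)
The plan is to treat \eqref{CTGF-Projected} as a pathwise deterministic problem for each $\omega \in \widetilde{\Omega}$ (on which the data $\vb_0,\fb,\zb$ satisfy \eqref{pathwise-regularity}) and construct a solution via a Galerkin approximation along the eigenbasis $\{\wb_i\}_{i\in\N}$ of the Stokes operator. Writing $\ub_n(t) = \sum_{i=1}^n c_i^n(t)\wb_i$ and projecting \eqref{CTGF-Projected} onto $\mathrm{span}\{\wb_1,\ldots,\wb_n\}$ produces an ODE system whose right-hand side is locally Lipschitz, since $\Brm$, $\Jcal$, $\Kcal$ are locally Lipschitz by \eqref{213-B}, \eqref{213-J}, \eqref{213-K} and $\zb$ enters only through smooth forcing terms thanks to \eqref{Z-regularity}. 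Cauchy-Lipschitz then yields a local-in-time solution $\ub_n$, extended to $[0,T]$ once uniform-in-$n$ bounds are established.

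The central work is a three-level a priori estimate, uniform in $n$ and depending only on the pathwise quantities in \eqref{pathwise-regularity}. Testing with $\ub_n$ produces the basic identity with $\frac{1}{2}\frac{d}{dt}[\|\ub_n\|_2^2+\alpha_1\|\nabla\ub_n\|_2^2]+\nu\|\nabla\ub_n\|_2^2$ on the left; the convective block $\Pcal[((\ub_n+\zb)\cdot\nabla)\Upsilon(\ub_n+\zb)]+\Pcal[\sum_j(\Upsilon(\ub_n+\zb))^j\nabla(\ub_n+\zb)^j]$ exhibits the cancellation typical of third-grade fluids (integration by parts combined with the divergence-free condition eliminates the third-order derivative contributions in the pairing with $\ub_n$), while $\Kcal$ supplies the cubic dissipation $\frac{\beta}{2}\|\Arm(\ub_n+\zb)\|_4^4$; cross terms involving $\zb$ and $\fb$ are absorbed by H\"older, Sobolev, and Young inequalities and closed by Gronwall. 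Testing next with $\A\ub_n$ uses the orthogonality \eqref{Au-orthogonal} to control $\|\A\ub_n\|_2$. Finally, testing with $\A^2\ub_n$ produces $\frac{1}{2}\frac{d}{dt}[\|\A\ub_n\|_2^2+\alpha_1\|\A^{3/2}\ub_n\|_2^2]+\nu\|\A^{3/2}\ub_n\|_2^2$ on the left; the structural cancellations valid under the thermodynamic constraint \eqref{third-grade-paremeters}, combined with interpolation inequalities and the $\dot{\Hb}^5$-regularity of $\zb$ from \eqref{Z-regularity}, allow closure at the $\Drm(\A^{3/2})$-level via Gronwall.

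With these uniform bounds together with a bound on $\partial_t\ub_n$ in $L^2(0,T;\Drm(\A))$ read directly from the equation, the Aubin-Lions-Simon compactness lemma extracts a subsequence converging weakly-$\ast$ in $L^\infty(0,T;\Drm(\A^{3/2}))$ and strongly in $C([0,T];\Drm(\A))$; passage to the limit in the nonlinear operators is then straightforward via \eqref{213-B}, \eqref{213-J}, \eqref{213-K}. Uniqueness follows by subtracting two solutions $\ub^1$ and $\ub^2$ with identical data, testing the difference equation with $\ub^1-\ub^2$, applying \eqref{213-B}, \eqref{213-J}, \eqref{213-K}, and invoking the $L^\infty(0,T;\Drm(\A^{3/2}))$-bound just obtained to yield integrable-in-time coefficients for a Gronwall argument forcing $\ub^1\equiv\ub^2$.

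The chief obstacle is the $\Drm(\A^{3/2})$-level estimate: when $((\ub_n+\zb)\cdot\nabla)\Upsilon(\ub_n+\zb)$ is paired with $\A^2\ub_n$, fourth-order spatial derivatives appear and the requisite cancellations rest on a delicate commutator-type argument exploiting incompressibility of both $\ub_n$ and $\zb$; the cubic dissipation provided by $\Kcal$ under \eqref{third-grade-paremeters} must then absorb the residual high-order cross terms. This is precisely the feature that forces the auxiliary process $\zb$ to live in $\dot{\Hb}^5$ rather than merely in $\dot{\Hb}^3$ and explains why \eqref{Z-regularity} was established at that level of regularity in Proposition \ref{thm_z_alpha}.
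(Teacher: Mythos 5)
Your overall architecture (pathwise reduction on $\widetilde{\Omega}$, Galerkin approximation along the Stokes eigenbasis, layered energy estimates, Aubin--Lions compactness, limit passage, Gronwall-type uniqueness) matches the paper's. But there is one genuine gap at the step you yourself identify as the chief obstacle: the $\Drm(\A^{\frac32})$-level estimate does \emph{not} close ``via Gronwall.'' When the critical trilinear terms (e.g.\ those of the schematic form $\int \mathfrak{D}^1(\ub_n+\zb)\cdot\mathfrak{D}^3(\ub_n+\zb)\cdot\mathfrak{D}^3(\ub_n+\zb)$) are estimated, one must pay for $\|\nabla(\ub_n+\zb)\|_{\infty}$, and in 2D the borderline embedding only gives $\|\cdot\|_{\infty}\leq \frac{C}{\sqrt{\varepsilon_0}}\|\cdot\|_{\dot{\Hb}^{1+\varepsilon_0}}$, which after interpolation costs an extra power $\varepsilon_0$ of the $\dot{\Hb}^3$ norm. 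The resulting differential inequality for $\mathbf{Y}=1+\|\A\ub_n\|_2^2+\alpha_1\|\A^{\frac32}\ub_n\|_2^2$ is superlinear, $\mathbf{Y}'\leq a(t)+\frac{C}{\sqrt{\varepsilon_0}}b(t)\mathbf{Y}^{1+\varepsilon_0/2}$, so a naive Gronwall/comparison yields only a local-in-time bound --- which would defeat the global-in-time statement (and Definition \ref{def-ranndom-TGF} requires $\ub\in\mathrm{L}^{\infty}(0,T;\Drm(\A^{\frac32}))$). The paper closes this by the Busuioc-type trick: working with $\mathbf{Y}^{-\varepsilon_0/2}$, observing $\frac{\drm}{\drm t}[\mathbf{Y}^{-\varepsilon_0/2}]\geq -C\sqrt{\varepsilon_0}[\cdots]$ with the bracket already controlled by the $\Drm(\A)$-level estimate, and then choosing $\varepsilon_0=\varepsilon_*$ small enough (since the obstruction $\frac{\varepsilon_*^{-1/2}}{2}\mathbf{Y}(0)^{-\varepsilon_*/2}\to+\infty$ as $\varepsilon_*\to0$) to keep $\mathbf{Y}^{-\varepsilon_*/2}$ bounded below on all of $[0,T]$. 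Relatedly, your suggestion that the quartic dissipation from $\Kcal$ absorbs the residual high-order cross terms at the $\A^2$ level is not how this works: in the paper those good-signed terms are simply discarded, and all remainders are handled by the $\varepsilon_0$-interpolation.

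Two smaller points. First, for uniqueness the paper works in the larger class $\mathrm{L}^{\infty}(0,T;\Drm(\A))$, using the Busuioc--Iftimie identity \eqref{equivalance-equation} to avoid third derivatives and then an Osgood-type argument (the $\sigma$-trick with $\|\wb\|_p\leq C_*p\|\wb\|_{\dot{\Hb}^1}$), because $\Hb^1\not\hookrightarrow\Lb^{\infty}$ makes the term $\int\mathfrak{U}\cdot\nabla\Arm(\ub_2+\zb):\Arm(\mathfrak{U})$ borderline there. Your route --- Gronwall using the $\Hb^3$ bound of both solutions --- is viable for uniqueness in the class of Definition \ref{def-ranndom-TGF}, but not by citing \eqref{213-J} and \eqref{213-K} as written: with test function $\mathfrak{U}$ those produce factors $\|\Arm(\mathfrak{U})\|_4$, which are not controlled by $\|\mathfrak{U}\|_2^2+\alpha_1\|\nabla\mathfrak{U}\|_2^2$; you must instead place $\Arm(\ub_i+\zb)$ in $\Lb^{\infty}$ (legitimate with the $\Hb^3$ bound) or exploit the sign of the $\Kcal$-difference. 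Second, the compactness you get from Aubin--Lions with $\partial_t\ub_n$ bounded in $\mathrm{L}^2(0,T;\Hb)$ is strong convergence in $\mathrm{L}^2(0,T;\Vb)$ (as in the paper), not immediately in $\mathrm{C}([0,T];\Drm(\A))$; the latter needs the $\Hb^3$ bound plus a Simon-type lemma, so be careful about the order in which these facts are established.
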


The proof of Theorem \ref{solution} is divided into the several steps which are given as separate subsubsections in the sequel.

\subsubsection{Finite-dimensional approximation} Here, we consider the basis $\{\wb_i\}_{i\in\mathbb{N}}$ of eigenfunctions of the Stokes operator $\A$, which is orthonormal in $\Hb$ and orthogonal in $\Vb$. We define $\Hb_n:=\mathrm{span}\{\boldsymbol{w}_j:j=1,\ldots, n\}$ for $n\in\N$. Let $\Prm_n$ denote the orthogonal projection from $\Vb'$ onto $\Hb_n$. For every $\xb\in\Vb'$, the projection is given by $\Prm_n\xb=\sum\limits_{j=1}^n \langle\xb,\boldsymbol{w}_j\rangle \boldsymbol{w}_j$. Since each element $\xb\in\Hb$ induces a functional $\xb^*\in\Vb'$ defined by $\langle\xb^*,\yb\rangle=(\xb,\yb)$, $\yb\in\Vb$, the restriction $\Prm_n|_\Hb$ corresponds to the orthogonal projection from $\Hb$ onto $\Hb_n$.  Therefore, for all $\xb\in\Hb$,
$\mathrm{P}_n\xb=\sum\limits_{i=1}^n\left(\xb,\boldsymbol{w}_i\right)\boldsymbol{w}_i$ and  $\|\mathrm{P}_n\xb-\xb\|_{2}\to 0$ as $n\to\infty$.
By the relation \eqref{ipHV}, for all $\xb\in\Vb$, $\Prm_n\xb$ also represents the orthogonal projection with respect to the inner product $(\cdot,\cdot)_{\Vb}$ in $\Vb$, implying that $\|\mathrm{P}_n\xb-\xb\|_{\Vb}\to 0$ as $n\to\infty$. 
For each $n\in\N$, we search for a approximate solution of the form 
\begin{align*}
	\ub_n(x,t):=\sum\limits_{k=1}^n g_k^n(t)\boldsymbol{w}_k(x),
\end{align*}
where $g_1^n(t),\ldots,g_k^n(t)$ are unknown scalar functions of $t$ such that it satisfies the following finite-dimensional system of ordinary differential equations in $\Hb_n$:
\begin{equation}
\label{Finite-dimensional-approx}
    \left\{
    \begin{aligned}
         \partial_t\Upsilon(\ub_n)  +\nu \A \ub_n + \Prm_n\Pcal[((\ub_n+\zb)\cdot\nabla)\Upsilon(\ub_n+\zb)] & + \Prm_n\mathcal{P}\left[\displaystyle\sum_{j=1}^2 (\Upsilon(\ub_n+\zb))^j \nabla (\ub_n+\zb)^j\right] 
 \\  -(\alpha_1+\alpha_2)\Prm_n[\Jcal(\ub_n+\zb) ]
   -\beta \Prm_n [\Kcal(\ub_n+\zb)] & = \mathcal{P} \fb_n  +  \zb_n +  (\theta+\alpha_1-\nu)\A\zb_n + \theta\alpha_1 \A^2\z_n,  \text{ for a.e. } t\in(0,T)  \\
\ub_n(0)& =  \Prm_n[\vb_0]=: \ub_{0,n},
    \end{aligned}
    \right.
\end{equation}
where $\fb_n:=\Prm_n[\mathcal{P}\fb]$ and $\zb_n:=\Prm_n[\zb]$.
So we have to solve a system of $n$ ordinary differential equations (ODE). Classical results on ODE ensures the existence of $g_k^n$ in an interval $[0, T_n]$, for $0<T_n\leq T$. We obtain the time $T_n=T$ by establishing the uniform energy estimates of the solutions satisfied by the system \eqref{Finite-dimensional-approx}.

\subsubsection{A priori estimates}
\textbf{\underline{Step I}:} Taking the inner product in $\eqref{Finite-dimensional-approx}_1$ with $\ub_n$, we get
\begin{align}
   &  \frac12 \frac{\drm}{\drm t}\bigg[\|\ub_n(t)\|_2^2 + \alpha_1 \|\nabla\ub_n(t)\|_2^2\bigg] + \nu \|\nabla\ub_n(t)\|_2^2 + \frac{\beta}{2}\|\Arm(\ub_n(t)+\zb(t))\|^4_4
   \nonumber\\ & = \underbrace{\alpha_1 b(\zb(t), \ub_n(t), \A\ub_n(t))}_{I_1} + \underbrace{ b(\zb(t), \ub_n(t), \Upsilon(\zb(t)))}_{I_2} 
    - \underbrace{b(\ub_n(t), \zb(t),\ub_n(t))}_{I_3} 
     -  \underbrace{\alpha_1b(\ub_n(t), \zb(t),\A(\ub_n(t) +\zb(t)))}_{I_4}
   \nonumber\\ & \quad - \underbrace{\frac{(\alpha_1+\alpha_2)}{2}\int_{\Tb^2}[\Arm(\ub_n(x,t)+\zb(x,t))]^2:\Arm(\ub_n(x,t)) \drm x }_{I_5}
   \nonumber\\ & \quad + \underbrace{\frac{\beta}{2}\int_{\Tb^2}|\Arm(\ub_n(x,t)+\zb(x,t))|^2[\Arm(\ub_n(x,t)+\zb(x,t)):\Arm(\zb(x,t))] \drm x}_{I_6}
   \nonumber\\ & \quad + \underbrace{[( \fb(t), \ub_n(t))  +  (\zb(t) +  (\theta+\alpha_1-\nu)\A\zb(t) + \theta\alpha_1 \A^2\z(t),\ub_n(t))]}_{I_7},\label{EE-ub1}
\end{align}
for a.e. $t\in[0,T]$. Let us estimate each term on the right hand side of \eqref{EE-ub1} using integration by parts, and H\"older's, Sobolev and Young's inequalities as follows:
\begin{align}
    |I_1| & = |\alpha_1 b(\zb, \ub_n, \A\ub_n)|
     =\left|\alpha_1\int_{\Tb^2}\sum_{i,j,k}^2
    \partial_k \zb^i
     \partial_i \ub_n^j
    \partial_k \ub_n^j \drm x\right| 
       \leq C \|\zb\|_{\Wb^{1,\infty}}\|\nabla\ub_n\|_{2}^2  \leq C \|\zb\|_{\dot{\Hb}^{3}}\|\nabla\ub_n\|_{2}^2,\label{EE-ub2}\\
    |I_2| & \leq \|\zb\|_{\infty}\|\nabla\ub_n\|_2\|\z+\alpha_1\A\zb\|_2 \leq C \|\zb\|_{\dot{\Hb}^2}^2\|\nabla\ub_n\|_2 \leq C \|\zb\|_{\dot{\Hb}^3}^4 + C\|\nabla\ub_n\|_2^2 ,\label{EE-ub3}\\
    |I_3| & \leq \|\nabla\zb\|_2\|\ub_n\|^2_{4} \leq C 
    \|\zb\|_{\dot{\Hb}^{3}}\|\nabla\ub_n\|_{2}^2,\label{EE-ub4}\\
    |I_4| & = \alpha_1\left|\int_{\Tb^2}\sum\limits_{i,j,k=1}^2 \partial_k \ub_n^i
   \partial_i \zb^j
   \partial_k (\ub_n^j+\zb^j) \drm x + \int_{\Tb^2}\sum\limits_{i,j,k=1}^2 \ub_n^i
  \partial_i\partial_k \zb^j\partial_k (\ub_n^j+\zb^j) \drm x\right| 
    \nonumber\\ & \leq C \|\nabla\ub_n\|_2\|\nabla\zb\|_4\|\Arm(\ub_n+\zb)\|_4 + C  \|\ub_n\|_4\|\zb\|_{\dot{\Hb}^2}\|\Arm(\ub_n+\zb)\|_4
    \nonumber\\ & \leq C \|\nabla\ub_n\|_2\|\zb\|_{\dot{\Hb}^2}\|\Arm(\ub_n+\zb)\|_4
      \leq \frac{\beta}{12} \|\Arm(\ub_n+\zb)\|^4_4 + C \|\zb\|^4_{\dot{\Hb}^3} + C\|\nabla\ub_n\|^2_2,\label{EE-ub5}\\
    |I_5| & \leq \frac{\beta}{12} \|\Arm(\ub_n+\zb)\|^4_4 + C\|\nabla\ub_n\|^2_2,\label{EE-ub6}\\
    |I_6| & \leq \frac{\beta}{12} \|\Arm(\ub_n+\zb)\|^4_4 + C\|\Arm(\zb)\|^4_4 \leq \frac{\beta}{12} \|\Arm(\ub_n+\zb)\|^4_4 + C \|\zb\|^4_{\dot{\Hb}^3}, \label{EE-ub7}\\
    |I_7| & \leq C [\|\fb\|_2+\|\zb\|_{\dot{\Hb}^4}]\|\ub_n\|_2 \leq C [\|\fb\|_2^2+\|\zb\|^4_{\dot{\Hb}^4}+1]+ C \|\ub_n\|^2_2.\label{EE-ub8}
\end{align}
Combining \eqref{EE-ub1}-\eqref{EE-ub8}, we obtain
\begin{align}
    &   \frac{\drm}{\drm t}\bigg[\|\ub_n(t)\|_2^2 + \alpha_1 \|\nabla\ub_n(t)\|_2^2\bigg] + 2\nu \|\nabla\ub_n(t)\|_2^2 + \frac{\beta}{2}\|\Arm(\ub_n(t)+\zb(t))\|^4_4
    \nonumber\\ & \leq C [\|\fb(t)\|_2^2+\|\zb(t)\|^4_{\dot{\Hb}^4}+1] + C[1+\|\zb(t)\|_{\dot{\Hb}^3}]\bigg[\|\ub_n(t)\|_2^2 + \alpha_1 \|\nabla\ub_n(t)\|_2^2\bigg],
\end{align}
for a.e. $t\in[0,T]$, which, by an application of the Gronwall Lemma, gives
\begin{align}
    &   \|\ub_n(t)\|_2^2 + \alpha_1 \|\nabla\ub_n(t)\|_2^2 + 2\nu \int_{0}^{t} \|\nabla\ub_n(s)\|_2^2 \drm s + \frac{\beta}{2}\int_{0}^{t}\|\Arm(\ub_n(s)+\zb(s))\|^4_4\drm s
    \nonumber\\ & \leq \bigg[\|\ub_n(0)\|_2^2 + \alpha_1 \|\nabla\ub_n(0)\|_2^2 +  C \int_{0}^{t}\{\|\fb(s)\|_2^2+\|\zb(s)\|^4_{\dot{\Hb}^4}+1\}\drm s\bigg] e^{C\int_{0}^{t}\{1+\|\zb(s)\|_{\dot{\Hb}^3}\}\drm s}
    \nonumber\\ & \leq C\bigg[\|\vb_0\|_{\Vb}^2 +   \int_{0}^{t}\{\|\fb(s)\|_2^2+\|\zb(s)\|^4_{\dot{\Hb}^4}+1\}\drm s\bigg] e^{C\int_{0}^{t}\{1+\|\zb(s)\|_{\dot{\Hb}^3}\}\drm s}:= K_1(t),\label{EE-ub9}
\end{align}
for all $t\in[0,T]$. In view of \eqref{pathwise-regularity},
we have from \eqref{EE-ub9} that
		\begin{align}\label{EE-ub10}
			\{\ub^n\}_{n\in\N} \text{ is a bounded sequence in }\mathrm{L}^{\infty}(0,T;\Vb)\cap\mathrm{L}^{4}(0,T; {\mathbb{W}}^{1,4}(\mathbb{T}^2)).
		\end{align}
\vskip 2mm
\noindent 
\textbf{\underline{Step II}:} Since $\{\wb_i\}_{i\in\mathbb{N}}$ is a sequence of eigenfunctions of the Stokes operator $\A$, it gives $\A\ub_n\in \Hb_n.$ Taking the inner product in $\eqref{Finite-dimensional-approx}_1$ with $\A\ub_n$, we get
\begin{align}
   &  \frac12 \frac{\drm}{\drm t}\bigg[\|\nabla\ub_n(t)\|_2^2 + \alpha_1 \|\A\ub_n(t)\|_2^2\bigg] + \nu \|\A\ub_n(t)\|_2^2 
   \nonumber\\ & + \underbrace{\frac{\beta}{2}\int_{\Tb^2}|\Arm(\ub_n(x,t)+\zb(x,t))|^2[\Arm(\ub_n(x,t)+\zb(x,t)):\Arm(\A\ub_n(x,t)+\A\zb(x,t))] \drm x}_{\widetilde{I}_1}
   \nonumber\\ & = - \underbrace{\alpha_1 b(\A(\ub_n(t)+\zb(t)),\ub_n(t)+\zb(t),\A(\ub_n(t)+\zb(t)))}_{\widetilde{I}_2} - \underbrace{b(\zb(t)-\alpha_1 \A \zb(t),\ub_n(t),\A\ub_n(t))}_{\widetilde{I}_3}
   \nonumber\\ & \quad + \underbrace{\alpha_1 b(\A\zb(t),\ub_n(t)+\zb(t),\A\zb(t))}_{\widetilde{I}_4} +\underbrace{\alpha_1 b(\A\zb(t),\zb(t),\A\ub_n(t))}_{\widetilde{I}_5} - \underbrace{b(\ub_n(t)+\zb(t), \Upsilon(\zb(t)),\A\ub_n(t))}_{\widetilde{I}_6}
   \nonumber\\ & \quad - \underbrace{\frac{(\alpha_1+\alpha_2)}{2}\int_{\Tb^2}[\Arm(\ub_n(x,t)+\zb(x,t))]^2:\Arm(\A\ub_n(x,t)) \drm x}_{\widetilde{I}_7} 
   \nonumber\\ & \quad + \underbrace{\frac{\beta}{2}\int_{\Tb^2}|\Arm(\ub_n(x,t)+\zb(x,t))|^2[\Arm(\ub_n(x,t)+\zb(x,t)):\Arm(\A\zb(x,t))] \drm x}_{\widetilde{I}_8}
   \nonumber\\ & \quad + \underbrace{[( \fb(t), \A\ub_n(t))  +  (\zb(t) +   (\theta+\alpha_1-\nu)\A\zb(t) + \theta\alpha_1 \A^2\zb(t) ,\A\ub_n(t))]}_{\widetilde{I}_9},\label{EE-Aub1}
\end{align}
for a.e. $t\in[0,T]$. Let us estimate each term on the right hand side of \eqref{EE-Aub1} using integration by parts, and H\"older's, Sobolev and Young's inequalities as follows:
\begin{align}
    \widetilde{I}_1 & = \frac{\beta}{2}\int_{\Tb^2}|\Arm(\ub_n(x)+\zb(x))|^2|\nabla\Arm(\ub_n(x)+\zb(x))|^2\drm x + \frac{\beta}{4} \int_{\mathbb{T}^2} |\nabla\left(|\Arm(\ub_n(x)+\zb(x))|^2\right)|^2  \drm x, \label{EE-Aub2}\\
   |\widetilde{I}_2| & = \left|\frac{\alpha_1}{2}\int_{\Tb^2} [\A(\ub_n(x)+\zb(x))\cdot \Arm(\ub_n(x)+\zb(x))]\cdot \A (\ub_n(x)+\zb(x))  \right|
    \nonumber\\ & = \left|\frac{\alpha_1}{2}\int_{\Tb^2} [\diver\Arm(\ub_n(x)+\zb(x))\cdot \Arm(\ub_n(x)+\zb(x))]\cdot \A (\ub_n(x)+\zb(x))  \right|
    \nonumber\\ & \leq \frac{\beta}{8}\int_{\Tb^2}|\Arm(\ub_n(x)+\zb(x))|^2|\nabla\Arm(\ub_n(x)+\zb(x))|^2\drm x + C \|\A(\ub_n+\zb)\|_2^2
    \nonumber\\ & \leq \frac{\beta}{8}\int_{\Tb^2}|\Arm(\ub_n(x)+\zb(x))|^2|\nabla\Arm(\ub_n(x)+\zb(x))|^2\drm x + C \|\A\ub_n\|_2^2 + C\|\zb\|^2_{\dot{\Hb}^3}
    \nonumber\\ & \leq \frac{\beta}{8}\int_{\Tb^2}|\Arm(\ub_n(x)+\zb(x))|^2|\nabla\Arm(\ub_n(x)+\zb(x))|^2\drm x + C \|\A\ub_n\|_2^2 + C\|\zb\|^4_{\dot{\Hb}^3} + C,\label{EE-Aub3}\\
|\widetilde{I}_3| & \leq  \|\zb-\alpha_1\A\zb\|_{4}\|\nabla\ub_n\|_{4}\|\A\ub_n\|_2 \leq  C \|\zb\|_{\dot{\Hb}^3}\|\A\ub_n\|_2^2, \label{EE-Aub4}\\
|\widetilde{I}_4| & \leq C \|\nabla(\ub_n+\zb)\|_2 \|\A\zb\|_4^2 \leq C \|\nabla\ub_n\|^2_2 + C \|\zb\|^4_{\dot{\Hb}^3} + C, 
 \label{EE-Aub5}\\
|\widetilde{I}_5| & \leq C \|\A\zb\|_2\|\nabla\zb\|_{\infty}\|\A\ub_n\|_2 \leq C \|\zb\|^2_{\dot{\Hb}^3} \|\A\ub_n\|_2 \leq C \|\A\ub_n\|^2_2 +  C \|\zb\|^4_{\dot{\Hb}^3},\label{EE-Aub6}\\
|\widetilde{I}_6| & \leq C \|\ub_n+\zb\|_{\infty} \|\zb\|_{\dot{\Hb}^3}\|\A\ub_n\|_2 \leq C  \|\zb\|_{\dot{\Hb}^3}\|\A\ub_n\|^2_2 + C  \|\zb\|^2_{\dot{\Hb}^3}\|\A\ub_n\|_2 \leq C [1+ \|\zb\|_{\dot{\Hb}^3}]\|\A\ub_n\|^2_2 + C  \|\zb\|^4_{\dot{\Hb}^3}, \label{EE-Aub7}\\
|\widetilde{I}_7| & \leq C \int_{\Tb^2}|\Arm(\ub_n(x)+\zb(x))||\nabla\Arm(\ub_n(x)+\zb(x))||\A\ub_n(x)| \drm x 
 \nonumber\\ & \leq \frac{\beta}{4}\int_{\Tb^2}|\Arm(\ub_n(x)+\zb(x))|^2|\nabla\Arm(\ub_n(x)+\zb(x))|^2\drm x + C\|\A\ub_n\|^2_2, \label{EE-Aub8}\\
 |\widetilde{I}_8| & \leq C \|\Arm(\ub_n+\zb)\|_{12} \|\Arm(\ub_n+\zb)\|_{3} \|\zb\|_{\dot{\Hb}^3}
  \nonumber\\ & \leq C \||\Arm(\ub_n+\zb)|^2\|^{\frac12}_{6} \|\A(\ub_n+\zb)\|_{2} \|\zb\|_{\dot{\Hb}^3}
   \nonumber\\ & \leq C \||\Arm(\ub_n+\zb)|^2\|^{\frac12}_{\Hb^1} \|\A(\ub_n+\zb)\|_{2} \|\zb\|_{\dot{\Hb}^3}
   \nonumber\\ & \leq C \|\Arm(\ub_n)\|_{4} \|\A(\ub_n+\zb)\|_{2} \|\zb\|_{\dot{\Hb}^3} + C \left(\int_{\mathbb{T}^2} |\nabla\left(|\Arm(\ub_n(x))|^2\right)|^2   \drm x\right)^{\frac14} \|\A(\ub_n+\zb)\|_{2} \|\zb\|_{\dot{\Hb}^3}
   \nonumber\\ & \leq \frac{\beta}{8} \int_{\mathbb{T}^2} |\nabla\left(|\Arm(\ub_n(x)+\zb(x))|^2\right)|^2  \drm x + C \|\Arm(\ub_n)\|_{4}^4 + C \|\A\ub_n\|_2^2 + C \|\A\zb\|_2^2 +  C \|\zb\|_{\dot{\Hb}^3}^4
   \nonumber\\ & \leq \frac{\beta}{8} \int_{\mathbb{T}^2} |\nabla\left(|\Arm(\ub_n(x)+\zb(x))|^2\right)|^2  \drm x + C \|\Arm(\ub_n+\zb)\|_{4}^4 + C \|\A\ub_n\|_2^2  +  C \|\zb\|_{\dot{\Hb}^3}^4 +C,\label{EE-Aub9}\\
 |\widetilde{I}_9| & \leq C [\|\fb\|_2+\|\zb\|_{\dot{\Hb}^4}]\|\A\ub_n\|_2 \leq C [\|\fb\|_2^2+\|\zb\|^4_{\dot{\Hb}^4}+1]+ C \|\A\ub_n\|^2_2.\label{EE-Aub10}
   \end{align}
Combining \eqref{EE-Aub1}-\eqref{EE-Aub10}, we obtain
\begin{align}
   &    \frac{\drm}{\drm t}\bigg[\|\nabla\ub_n(t)\|_2^2 + \alpha_1 \|\A\ub_n(t)\|_2^2\bigg] + 2 \nu \|\A\ub_n(t)\|_2^2 
   \nonumber\\ & + \frac{\beta}{2}\int_{\Tb^2}|\Arm(\ub_n(x,t)+\zb(x,t))|^2|\nabla\Arm(\ub_n(x,t)+\zb(x,t))|^2\drm x + \frac{\beta}{4} \int_{\mathbb{T}^2} |\nabla\left(|\Arm(\ub_n(x,t)+\zb(x,t))|^2\right)|^2  \drm x
    \nonumber\\ & \leq C [ \|\Arm(\ub_n(t) +\zb(t))\|_{4}^4 + \|\fb(t)\|_2^2+\|\zb(t)\|^4_{\dot{\Hb}^4}+1] + C[1+\|\zb(t)\|_{\dot{\Hb}^3}]\bigg[\|\nabla\ub_n(t)\|_2^2 + \alpha_1 \|\A\ub_n(t)\|_2^2\bigg],
\end{align}
for a.e. $t\in[0,T]$, which, by an application of the Gronwall Lemma, gives
\begin{align}
    &   \|\nabla\ub_n(t)\|_2^2 + \alpha_1 \|\A\ub_n(t)\|_2^2 + 2\nu \int_{0}^{t} \|\A\ub_n(s)\|_2^2 \drm s + \frac{\beta}{4}\int_{0}^{t}\int_{\Tb^2}|\nabla\left(|\Arm(\ub_n(x,s)+\zb(x,s))|^2\right)|^2  \drm x\drm s
    \nonumber\\ & \leq \bigg[\|\nabla\ub_n(0)\|_2^2 + \alpha_1 \|\A\ub_n(0)\|_2^2 +  C \int_{0}^{t}\{ \|\Arm(\ub_n(s) +\zb(s))\|_{4}^4 + \|\fb(s)\|_2^2+\|\zb(s)\|^4_{\dot{\Hb}^4}+1\}\drm s\bigg] e^{C\int_{0}^{t}\{1+\|\zb(s)\|_{\dot{\Hb}^3}\}\drm s}
    \nonumber\\ & \leq C\bigg[\|\A\vb_0\|_{2}^2 +   \int_{0}^{t}\{ \|\Arm(\ub_n(s) +\zb(s))\|_{4}^4 + \|\fb(s)\|_2^2+\|\zb(s)\|^4_{\dot{\Hb}^4}+1\}\drm s\bigg] e^{C\int_{0}^{t}\{1+\|\zb(s)\|_{\dot{\Hb}^3}\}\drm s}
    \nonumber\\ & \leq C\bigg[\|\A\vb_0\|_{2}^2 + K_1(t)  + \int_{0}^{t}\{  \|\fb(s)\|_2^2+\|\zb(s)\|^4_{\dot{\Hb}^4}+1\}\drm s\bigg] e^{C\int_{0}^{t}\{1+\|\zb(s)\|_{\dot{\Hb}^3}\}\drm s}:= K_2(t),\label{EE-Aub11}
\end{align}
for all $t\in[0,T]$. 
In view of \eqref{pathwise-regularity} and \eqref{EE-ub9}, we have from \eqref{EE-Aub11} that
		\begin{align}\label{EE-Aub13}
			\{\ub^n\}_{n\in\N} \text{ is a bounded sequence in }\mathrm{L}^{\infty}(0,T;\Drm(\A)).
		\end{align}
\vskip 2mm
\noindent 
\textbf{\underline{Step III}:} Taking the inner product with $\partial_t\ub_n$ in $\eqref{Finite-dimensional-approx}_1$, we obtain
\begin{align}\label{EE-ubt1}
    & \|\partial_t\ub_n(t)\|_2^2 +\alpha_1\|\partial_t\nabla\ub_n(t)\|_2^2  +\frac{\nu}{2}\frac{\drm}{\drm t}  \|\nabla\ub_n(t)\|_2^2  
\nonumber\\ &  =  \underbrace{b(\ub_n(t)+\zb(t), \partial_t\ub_n(t), \Upsilon(\ub_n(t)+\zb_n(t)))}_{\widetilde{I}_1}
  - \underbrace{ b(\partial_t\ub_n(t),\ub_n(t)+\zb(t), \Upsilon(\ub_n(t)+\zb_n(t))) }_{\widetilde{I}_2}
\nonumber\\ & 
\quad -\underbrace{(\alpha_1+\alpha_2)(\Arm(\ub_n(t)+\zb(t))^2, \partial_t\nabla\ub_n(t)) }_{\widetilde{I}_3} 
 - \underbrace{ \beta (|\Arm(\ub_n(t)+\zb(t))|^2\Arm(\ub_n(t)+\zb(t)),\partial_t\nabla\ub_n(t)) }_{\widetilde{I}_4}
\nonumber \\& \quad + \underbrace{( \fb(t)  +  \zb(t) +   (\theta+\alpha_1-\nu)\A\zb(t) + \theta\alpha_1 \A^2\zb(t), \partial_t\ub_n(t))}_{\widetilde{I}_5},
\end{align}
for a.e. $t\in[0,T]$. Let us estimate each term on the right hand side of \eqref{EE-ubt1} using integration by parts, and H\"older's, Sobolev and Young's inequalities as follows:
\begin{align}
    |\widetilde{I}_1| & \leq  \|\ub_n + \zb\|_{\infty} \|\partial_t\nabla\ub_n\|_2 \|\ub_n  + \alpha_1\A \ub_n + \zb + \alpha_1\A \zb\|_2
    \nonumber\\ & \leq C \|\partial_t\nabla\ub_n\|_2 \|\A(\ub_n + \zb)\|^2_2 \leq \frac{\alpha_1}{8} \|\partial_t\nabla\ub_n\|_2^2 + C  \|\A \ub_n \|^6_2 + C \|\zb\|^6_{\dot{\Hb}^2}+C,\label{EE-ubt2}\\
|\widetilde{I}_2| & \leq   \|\partial_t\ub_n\|_4 \|\nabla(\ub_n + \zb)\|_4 \|\ub_n + \alpha_1\A\ub_n + \zb +\alpha_1\A\zb \|_2 \nonumber\\
  & \leq C \|\partial_t\nabla\ub_n\|_2  \|\A(\ub_n + \zb)\|^2_2 \leq \frac{\alpha_1}{8} \|\partial_t\nabla\ub_n\|_2^2 + C  \|\A \ub_n \|^6_2 + C \|\zb\|^6_{\dot{\Hb}^2}+C,\label{EE-ubt3}\\
  |\widetilde{I}_3| & \leq \|\partial_t\nabla\ub_n\|_2  \|\Arm(\ub_n + \zb)\|^2_4 \leq \frac12 \|\partial_t\nabla\ub_n\|_2^2  + C \|\Arm(\ub_n + \zb)\|^4_4
  \nonumber\\ & \leq \frac{\alpha_1}{8} \|\partial_t\nabla\ub_n\|_2^2  + C \|\A(\ub_n + \zb)\|^4_2 \leq \frac{1}{2} \|\partial_t\nabla\ub_n\|_2^2 + C  \|\A \ub_n \|^6_2 + C \|\zb\|^6_{\dot{\Hb}^2}+C , 
  \label{EE-ubt4}\\
  |\widetilde{I}_4| & \leq \|\partial_t\nabla\ub_n\|_2  \|\Arm(\ub_n + \zb)\|^3_6 \leq \frac{\alpha_1}{8} \|\partial_t\nabla\ub_n\|_2^2  + C \|\Arm(\ub_n + \zb)\|^6_6
  \nonumber\\ & \leq \frac12 \|\partial_t\nabla\ub_n\|_2^2  + C \|\A(\ub_n + \zb)\|^6_2 \leq \frac{1}{2} \|\partial_t\nabla\ub_n\|_2^2 + C  \|\A \ub_n \|^6_2 + C \|\zb\|^6_{\dot{\Hb}^2}+C , 
  \label{EE-ubt5}\\
  |\widetilde{I}_5| &  \leq C [\|\fb\|_2^2+\|\zb\|^6_{\dot{\Hb}^4}+1]+ \frac12
  \|\partial_t\ub_n\|_2^2.\label{EE-ubt6}
\end{align}
Combining \eqref{EE-ubt1}-\eqref{EE-ubt6}, we obtain
\begin{align}
   &  \|\partial_t\ub_n(t)\|_2^2 +\alpha_1\|\partial_t\nabla\ub_n(t)\|_2^2+   \nu \frac{\drm}{\drm t} \|\nabla\ub_n(t)\|_2^2  
     \leq C [ \|\A\ub_n(t)\|_{2}^6 + \|\fb(t)\|_2^2+\|\zb(t)\|^6_{\dot{\Hb}^4}+1],
\end{align}
for a.e. $t\in[0,T]$, which, using \eqref{EE-Aub11}, gives 
\begin{align}
    &  \nu \|\nabla\ub_n(t)\|_2^2  +  \int_{0}^{t} \bigg[\|\partial_t\ub_n(s)\|_2^2 +\alpha_1\|\partial_t\nabla\ub_n(s)\|_2^2\bigg] \drm s 
    \nonumber\\ & \leq \nu \|\nabla\ub_n(0)\|_2^2  +  C \int_{0}^{t}\{ \|\A\ub_n(s) \|_{2}^6 + \|\fb(s)\|_2^2+\|\zb(s)\|^6_{\dot{\Hb}^4}+1\}\drm s
    \nonumber\\ & \leq C\bigg[\|\ub_0\|_{\Vb}^2 + t[K_2(t)]^3 +  \int_{0}^{t}\{ \|\fb(s)\|_2^2+\|\zb(s)\|^6_{\dot{\Hb}^4}+1\}\drm s\bigg] := K_3(t),\label{EE-ubt7}
\end{align}
for all $t\in[0,T].$  
In view of \eqref{pathwise-regularity} and \eqref{EE-Aub11}, we have from \eqref{EE-ubt7} that
		\begin{align}\label{EE-ubt8}
			\{\partial_t \ub^n\}_{n\in\N} \text{ is a bounded sequence in }\mathrm{L}^{2}(0,T;\Vb).
		\end{align}

Let us define 
\begin{align}\label{eqn-mathcal-G}
    \mathcal{G} (\ub, \zb) & :=\nu \A \ub  +\mathcal{P}[((\ub+\zb)\cdot \nabla)\Upsilon(\ub+\zb)]  + \mathcal{P}\left[\displaystyle\sum_{j=1}^2[\Upsilon(\ub+\zb)]^j\nabla (\ub^j+\zb^j)\right] 
    \nonumber \\ & \quad 
-(\alpha_1+\alpha_2) \mathcal{P}\bigg[\diver[\Arm(\ub+\zb)^2] \bigg] 
 -\beta  \mathcal{P}\bigg[\diver[|\Arm(\ub+\zb)|^2\Arm(\ub+\zb)] \bigg].
\end{align}
Note that, from \eqref{Finite-dimensional-approx}, we have 
\begin{align}
&\int_0^T\|\mathrm{P}_n\mathcal{G}(\ub_n(s), \zb(s))\|^2_{\Vb^{\prime}}\drm s 
\nonumber\\   & \leq \int_0^T\|\partial_t\ub_n(s)+\alpha_1\partial_t\A\ub_n(s)-\Prm_n\mathcal{P} \fb(s)  - \Prm_n\zb(s) -   (\theta+\alpha_1-\nu)\A\Prm_n\zb(s) - \theta\alpha_1 \Prm_n\A^2\zb(s)\|^2_{\Vb^{\prime}}\drm s
   \nonumber\\ & \leq C\int_0^T\|\partial_t\nabla\ub_n(s)\|^2_{2}\drm s + C\int_0^T\|\fb(s)\|^2_{2}\drm s + C\int_0^T\|\zb(s)\|^2_{\dot{\Hb}^3}\drm s
  \nonumber\\ &   \leq CK_3(T) + C\int_0^T\|\fb(s)\|^2_{2}\drm s + C\int_0^T\|\zb(s)\|^2_{\dot{\Hb}^3}\drm s,
\end{align}
 and \eqref{EE-ubt7} provides
\begin{align}\label{eqn-I+A_i_u}
 \int_0^T\|\partial_t\Upsilon(\ub_n(s))\|^2_{\Vb^{\prime}}\drm s 
  & \leq 2\int_0^T\|\partial_t\ub_n(s)\|^2_{\Vb^{\prime}}\drm s + 2\alpha_1^2\int_0^T\|\partial_t\A\ub_n(s)\|^2_{\Vb^{\prime}}\drm s 
  \nonumber\\ & \leq C\int_0^T\|\partial_t\nabla\ub_n(s)\|^2_{2}\drm s 
   \leq CK_3(T),
\end{align}
 which implies 
\begin{align}\label{EE-Gu}
    \{\Prm_n\mathcal{G}(\ub_n, \zb)\}_{n\in\N} \text{ and } \{\partial_t\Upsilon(\ub_n)\}_{n\in\N} \text{ is a bounded sequence in }\mathrm{L}^{2}(0,T;\Vb^{\prime}), \;\; \Pb\text{-a.s.}
\end{align}
 \subsubsection{\texorpdfstring{Weak$^{\ast}$}{}, weak and strong limits.}
 Using \eqref{EE-ub10}, \eqref{EE-Aub13}, \eqref{EE-ubt8}, \eqref{EE-Gu} and the \textit{Banach-Alaoglu theorem}, we infer the existence of an element $\ub\in\mathrm{L}^{\infty}(0,T;\Drm(\A))$ with $\partial_t (\Irm+\alpha_1\A) \ub \in \mathrm{L}^{2}(0,T;\Vb^{\prime})$ and $\mathcal{G}_0\in \mathrm{L}^{2}(0,T;\Vb^{\prime})$ such that
		\begin{equation}
		\left.    \begin{aligned}
			\ub_n\xrightharpoonup{w^*}&\ \ub &&\text{ in }\ \ \ \ \	\mathrm{L}^{\infty}(0,T;\Drm(\A)),\label{S7}\\
			\ub_n\xrightharpoonup{w}&\ \ub   && \text{ in } \ \ \ \ \ \mathrm{L}^{2}(0,T;\Drm(\A))\\
			\partial_t \Upsilon( \ub_n) \xrightharpoonup{w}&\;  \partial_t \Upsilon(\ub)   && \text{ in }  \ \ \ \ \ \mathrm{L}^{2}(0,T;\Vb^{\prime}),
            \\
            \Prm_n\mathcal{G}(\ub_n, \zb)\xrightharpoonup{w}& \; \mathcal{G}_0   && \text{ in }  \ \ \ \ \ \mathrm{L}^{2}(0,T;\Vb^{\prime}),
		\end{aligned}\right\}
		\end{equation}
	along a subsequence (still denoted by the same symbol). In addition, since $\ub_n\in \mathrm{L}^{2}(0,T;\Drm(\A))$, $\partial_t\ub_n\in \mathrm{L}^{2}(0,T;\Hb)$, in view of continuous embeddings $\Drm(\A)\subset \Vb\subset \Hb$ with compact embedding $\Drm(\A)\subset \Vb$ and Aubin-Lions compactness lemma, we also have (along a subsequence, still denoted by the same symbol)
\begin{align}\label{Strong-Convergence-v}
		\ub_n \to & \ \ub \ \ \  \text{ in }\ \ \ \ \	\mathrm{L}^{2}(0,T;\Vb).
\end{align}

\subsubsection{Passing \texorpdfstring{$n\to \infty$}{} in \texorpdfstring{\eqref{Finite-dimensional-approx}}{}.}
Let us start by  showing that $\mathcal{G}(\ub, \zb)$ is an element in $\mathrm{L}^{2}(0,T;\Vb^{\prime})$ for $\ub\in \mathrm{L}^{\infty}(0,T;\Drm(\A))$, $\fb\in\mathrm{L}^{2}(0,T;\dot{\Lb}^{2}(\mathbb{T}^2))$ and $\zb\in\mathrm{L}^{\infty}(0,T;\dot{\Hb}^{3}(\mathbb{T}^2))$. Since, for $\boldsymbol{\phi}\in\mathrm{L}^{2}(0,T;\Vb)$, we have
\begin{align}\label{Bound-G-ub}
    & \int_0^T\left<\mathcal{G}(\ub(t), \zb(t)),\boldsymbol{\phi}(t)\right> \drm t
 \nonumber\\ & = \int_0^T\int_{\Tb^2} \bigg\{\nu \nabla \ub(t)  +((\ub(t)+\zb(t))\otimes(\Upsilon(\ub (t)+\zb(t)))) 
 -(\alpha_1+\alpha_2) \Arm(\ub(t)+\zb(t))^2 
  \nonumber \\ & \quad -\beta  |\Arm(\ub(t)+\zb(t))|^2\Arm(\ub(t)+\zb(t)) \bigg\}: \nabla\boldsymbol{\phi}(t) \drm x \drm t  
     +  
 \int_0^T b(\boldsymbol{\phi}(t),\ub(t)+\zb(t), \Upsilon(\ub (t)+\zb(t))) \drm t
 \nonumber\\ & \leq C\int_0^T\bigg[\|\ub(t)\|_{\Vb}+\|\ub(t)+\zb(t)\|_{\infty} \|\Upsilon(\ub (t)+\zb(t))\|_{2} 
 + \|\Arm(\ub(t)+\zb(t))\|^2_{4} 
   +  \|\Arm(\ub(t)+\zb(t))\|^3_6\bigg]
   \nonumber \\ & \quad \times \|\boldsymbol{\phi}(t)\|_{\Vb}  \drm t  
  +  
 \int_0^T \|\boldsymbol{\phi}(t)\|_{4} \|\nabla(\ub(t)+\zb(t))\|_{4} \|\Upsilon(\ub (t)+\zb(t))\|_{2} \drm t
  \nonumber\\ & \leq C\int_0^T\bigg[\|\ub(t)\|_{\Vb}+\|\A(\ub(t)+\zb(t))\|_{2}^2   
 +  \|\A(\ub(t)+\zb(t))\|^3_2\bigg] \|\boldsymbol{\phi}(t)\|_{\Vb}  \drm t  
 \nonumber\\ & \leq C\left(\int_0^T\bigg[\|\ub(t)\|^2_{\Vb}+\|\A(\ub(t)+\zb(t))\|_{2}^4   
 +  \|\A(\ub(t)+\zb(t))\|^6_2\bigg]\drm t \right)^{\frac12} \|\boldsymbol{\phi}\|_{\mathrm{L}^{2}(0,T;\Vb)},
\end{align}
it implies that $\mathcal{G}(\ub, \zb)\in\mathrm{L}^{2}(0,T;\Vb^{\prime}).$

Let us now show that $\mathcal{G}_0=\mathcal{G}(\ub, \zb)$. For $\boldsymbol{\phi}\in\mathrm{L}^{2}(0,T;\Vb)$, we consider
\begin{align}\label{G_n-weak-convergence}
	& \int_0^{T}\left\langle \mathcal{G}_n(\ub_n(t), \zb(t))- \mathcal{G}(\ub(t), \zb(t)) , \boldsymbol{\phi}(t) \right\rangle \drm t
	\nonumber\\ &  = \underbrace{\int_0^{T}\left\langle \mathcal{G}(\ub_n(t), \zb(t)) ,\Prm_n \boldsymbol{\phi}(t) -\boldsymbol{\phi}(t) \right\rangle \drm t }_{:=G_1(n)} + \underbrace{\int_0^{T}\left\langle \mathcal{G}(\ub_n(t), \zb(t))-  \mathcal{G}(\ub(t), \zb(t)) , \boldsymbol{\phi}(t) \right\rangle\drm t}_{:=G_2(n)}.
\end{align}

Since $\{\ub_n\}_{n\in\N}$ is a bounded sequence in $\mathrm{L}^{\infty}(0,T;\Drm(\A))$ (see \eqref{EE-Aub13}), a calculation similar to \eqref{Bound-G-ub} gives  that the sequence
\begin{center}
    $\left\{\int_0^T\|\mathcal{G}(\ub_n(t), \zb(t))\|^2_{\Vb^{\prime}}\drm t\right\}_{n\in\N} $ is a bounded sequence.
\end{center}
Also, we have 
 $\Prm_n\boldsymbol{\phi}\to \boldsymbol{\phi}$ in $\mathrm{L}^2(0,T;\Vb)$ therefore $\lim\limits_{n\to\infty}G_1(n)=0.$

Let us now show that $\lim\limits_{n\to\infty}G_2(n)=0.$  Define $\vb^n:=\ub_n-\ub$ and consider for $\boldsymbol{\phi}\in\mathrm{L}^{2}(0,T;\Vb)$
\begin{align}\label{eqn-Gu1-Gu2-1}
	&\left\langle \mathcal{G}(\ub_n, \zb)-  \mathcal{G}(\ub, \zb) , \boldsymbol{\phi} \right\rangle 
	\nonumber\\ & = \nu \underbrace{ (\nabla\vb^n,\nabla\boldsymbol{\phi} )}_{G_{21}}  - \underbrace{b(\vb^n ,\boldsymbol{\phi} , \Upsilon(\ub_n+\zb)  )}_{G_{22}} - \underbrace{b(\ub+\zb  ,\boldsymbol{\phi} , \Upsilon(\vb^n)  )}_{G_{23}} 
    + \underbrace{b(\boldsymbol{\phi} , \ub+\zb , \Upsilon(\vb^n) )}_{G_{24}} + \underbrace{b(\boldsymbol{\phi} , \vb^n , \Upsilon(\ub_n+\zb) ) }_{G_{25}} 
    \nonumber\\ & \quad + (\alpha_1+\alpha_2) \underbrace{\big[\left(\Arm(\vb^n)\Arm(\ub_n+\zb), \nabla\boldsymbol{\phi}\right)+ \left(\Arm(\ub+\zb)\Arm(\vb^n), \nabla\boldsymbol{\phi}\right)\big]}_{G_{26}}
	\nonumber\\ & \quad  + \beta\underbrace{\bigg[ \big( [\Arm(\vb^n ):\Arm(\ub_n+\zb)]\Arm(\ub_n+\zb )+   [\Arm(\ub+\zb ):\Arm(\vb^n )]\Arm(\ub_n+\zb )  + 
	 |\Arm(\ub+\zb )|^2\Arm(\vb^n ),\nabla\boldsymbol{\phi}\big) \bigg]}_{G_{27}}.
\end{align}
It is immediate from the weak convergence $\eqref{S7}_2$ that 
\begin{align}\label{eqn-Gu1-Gu2-2}
    \int_0^T [\nu G_{21}(t) - G_{23}(t) + G_{24}(t)]\drm t \to 0 \; \text{ as } n\to \infty.
\end{align}
Next using H\"older's, Agmon's and Sobolev's inequalities, and the following Gagliardo-Nirenberg inequality for $1\leq p <\infty$
\begin{align}\label{Gagliardo-Nirenberg-Lp}
\|\nabla\yb(t)\|_p  \leq C	\|\A(\yb(t))\|^{1-\frac1p}_2 \|\yb(t)\|_2^{\frac1p}, \text{	for all }\yb\in\Drm(\A),
\end{align} 
we have
\begin{align}\label{eqn-Gu1-Gu2-3}
    & \left|\int^T_0[-G_{22}(t)+G_{25}(t)]\drm t\right|
    \nonumber\\ & \leq \int^T_0 [\|\vb^n(t)\|_{\infty}\|\boldsymbol{\phi}(t)\|_{\Vb} + \|\boldsymbol{\phi}(t)\|_{4}  \|\nabla\vb^n(t)\|_4 ]\|\Upsilon(\ub_n(t)+\zb(t))\|_2 \drm t
    \nonumber\\ & \leq C \int^T_0 [\|\A\vb^n(t)\|^{\frac12}_2 \|\vb^n(t)\|^{\frac12}_{2} +  \|\A\vb^n(t)\|^{\frac34}_2 \|\vb^n(t)\|^{\frac14}_{2} ][\|\A\ub_n(t)\|_2 + \|\A\zb(t)\|_2]  \|\boldsymbol{\phi}(t)\|_{\Vb} \drm t
    \nonumber\\ & \leq C \int^T_0  \big[\|\A\ub_n(t)\|_2 + \|\A\zb(t)\|_2\big] \|\A\vb^n(t)\|^{\frac34}_2 \|\vb^n(t)\|^{\frac14}_{2}   \|\boldsymbol{\phi}(t)\|_{\Vb} \drm t
    \nonumber\\ & \leq C  \sup_{t\in[0,T]} \big[\|\A\ub_n(t)\|_2 + \|\A\zb(t)\|_2\big] \|\A\vb^n\|^{\frac34}_{\mathrm{L}^{2}(0,T;\Hb)} \|\vb^n\|^{\frac14}_{\mathrm{L}^{2}(0,T;\Hb)}   \|\boldsymbol{\phi}\|_{\mathrm{L}^{2}(0,T;\Vb)} 
    \nonumber\\ & \leq C \sup_{t\in[0,T]} \big[  [K_2(t)]^{\frac12} + \|\A\zb(t)\|_2\big] [K_2(T)]^{\frac38} \|\vb^n\|^{\frac14}_{\mathrm{L}^{2}(0,T;\Hb)}   \|\boldsymbol{\phi}\|_{\mathrm{L}^{2}(0,T;\Vb)} 
    \nonumber\\ & \to 0  \text{ as } n\to \infty,
\end{align}
where we have used the strong convergence obtained in \eqref{Strong-Convergence-v}. Finally, using H\"older's inequality and \eqref{Gagliardo-Nirenberg-Lp}, we obtain
\begin{align}\label{eqn-Gu1-Gu2-4}
     & \left|\int^T_0[(\alpha_1+\alpha_2)G_{26}(t)+\beta G_{27}(t)]\drm t\right|
    \nonumber\\ & \leq C \int^T_0\big[ \|\Arm(\ub_n(t)+\zb(t))\|_3+ \|\Arm(\ub(t)+\zb(t))\|_{3} + \|\Arm(\ub_n(t)+\zb(t))\|_6^2+ \|\Arm(\ub(t)+\zb(t))\|_{6}\|\Arm(\ub_n(t)+\zb(t))\|_6
    \nonumber\\ & \qquad + \|\Arm(\ub(t)+\zb(t))\|^2_6\big]
    \|\Arm(\vb^n(t))\|_{6}\|\boldsymbol{\phi}(t)\|_{\Vb}\drm t
    \nonumber\\ & \leq C \int^T_0\big[1+  \|\A\ub_n(t)\|_2^2  +  \|\A\zb(t)\|_{2}^2 \big]
    \|\A\vb^n(t)\|_{2}^{\frac56} \|\vb^n(t)\|_{2}^{\frac16}\|\boldsymbol{\phi}(t)\|_{\Vb}\drm t
    \nonumber\\ & \leq C \sup_{t\in[0,T]} \big[1+  K_2(t)  +  \|\A\zb(t)\|_{2}^2 \big] [K_2(T)]^{\frac{5}{12}} \|\vb^n\|^{\frac16}_{\mathrm{L}^{2}(0,T;\Hb)}   \|\boldsymbol{\phi}\|_{\mathrm{L}^{2}(0,T;\Vb)} 
    \nonumber\\ & \to 0  \text{ as } n\to \infty,
\end{align}
where we have used the strong convergence obtained in \eqref{Strong-Convergence-v}. Combing \eqref{eqn-Gu1-Gu2-1}-\eqref{eqn-Gu1-Gu2-2} and \eqref{eqn-Gu1-Gu2-3}-\eqref{eqn-Gu1-Gu2-4}, we infer that $\lim\limits_{n\to\infty}G_2(n)=0.$ Therefore, from $\eqref{S7}_4$, \eqref{G_n-weak-convergence} and uniqueness of weak limit, we obtain $\mathcal{G}_0=\mathcal{G}(\ub, \zb)$.
Also, we have $\fb_n\to \mathcal{P}\fb $ in $\mathrm{L}^{2}(0,T;\Hb)$. Therefore, on passing to limit as $n\to\infty$ in \eqref{Finite-dimensional-approx}, the limit $\ub(\cdot)$ satisfies:
	\begin{equation}\label{eqn-TGF-limit}
			\partial_t(\Irm+\alpha_1\A)\ub =-\mathcal{G}(\ub, \zb) +\mathcal{P}\fb + \zb +   (\theta+\alpha_1-\nu)\A\zb + \theta\alpha_1 \A^2\zb, \ \ \ \ \text{ in }  \ \ \ \  \mathrm{L}^{2}(0,T;\Vb^{\prime}).
	\end{equation}

Next, we notice that  \cite[Lemma 1.2, p.176]{temam2001navier}, $(\Irm+\alpha_1\A)^{\frac12}\ub\in\mathrm{L}^{2}(0,T;\Vb)$ and $\partial_t(\Irm+\alpha_1\A)^{\frac12}\ub\in \mathrm{L}^{2}(0,T;\Vb')$ imply $(\Irm+\alpha_1\A)^{\frac12}\ub\in \mathrm{C}([0,T];\Hb)$, the real-valued function $t\mapsto\|(\Irm+\alpha_1\A)^{\frac12}\ub(t)\|_{2}^2$ is absolutely continuous and the following equality is satisfied:
\begin{align}
    \frac12\frac{\drm}{\drm t}\|(\Irm+\alpha_1\A)^{\frac12}\ub(t)\|_{2}^2 =  \left<\partial_t (\Irm+\alpha_1\A)^{\frac12}\ub(t),(\Irm+\alpha_1\A)^{\frac12}\ub(t)  \right>=  \left<\partial_t (\Irm+\alpha_1\A)\ub(t),\ub(t)  \right>,
\end{align}
for a.e.  $t\in[0,T]$.  Hence, the initial condition in the Definition \ref{def-ranndom-TGF} also makes sense. 

\subsubsection{Uniqueness}
In order to obtain the uniqueness, we recall the following identity (form \cite[Appendix]{Busuioc+Iftimie_2004}) for any vector $\wb$ which will be used in the sequel:
\begin{align}\label{equivalance-equation}
    \frac12 \nabla\left[\alpha_1|\nabla\wb|^2-|\wb|^2\right]
      &=(\wb\cdot \nabla)\wb + \diver \big[-\alpha_1\underbrace{\{\wb\cdot \nabla\Arm(\wb)+L(\wb)^{T}\Arm(\wb)+\Arm(\wb)L(\wb)\}}_{=:\mathcal{N}(\wb)}-\alpha_2\Arm(\wb)^2\big]
    \nonumber\\ & \quad - \underbrace{\bigg[(\wb\cdot\nabla)(\wb-\alpha_1\Delta\wb) + \sum_{j=1}^2 (\wb-\alpha_1\Delta\wb)_j\nabla\wb_j - (\alpha_1+\alpha_2)\diver\Arm(\wb)^2\bigg]}_{=:\mathcal{M}(\wb)},
\end{align}
where $L(\wb)$ and $\Arm(\wb)$ are the matrix with entries $[L(\wb)]_{ij}=\partial_{j}\wb_i$ and $[\Arm(\wb)]_{ij}=\partial_{i}\wb_j+ \partial_{j}\wb_i$. We also recall Sobolev embedding (from \cite[p. 723-724]{Chemin+Xu_1997}) which says that there exists a constant $C_{\ast}>0$ (independent of $p$) such that for any $\wb\in\dot{\Hb}^{1}(\Tb^2)$
\begin{align}\label{Best-constant-sobolev-embedding}
    \|\wb\|_{p} \leq C_{\ast}p\|\wb\|_{\dot{\Hb}^1}.
\end{align}

Define $\mathfrak{U}=\ub_1-\ub_2$, where $\ub_1$ and $\ub_2$ are two solutions of system \eqref{random-third-grade-fluids-equations} in the sense of Definition \ref{def-ranndom-TGF}. Then $\mathfrak{U}\in\mathrm{C}([0,T];\Vb)\cap\mathrm{L}^{\infty}(0,T;\Drm(\A))$ and satisfies
\begin{equation}\label{Uni}
	\left\{
	\begin{aligned}
		\partial_t \Upsilon (\mathfrak{U}(t)) &= -\left[\mathcal{G}(\ub_1(t), \zb(t))-\mathcal{G}(\ub_2(t), \zb(t))\right], \\
		\mathfrak{U}(0)&= \textbf{0},
	\end{aligned}
	\right.
\end{equation}
for a.e. $t\in[0,T]$, in $\Vb'$.	Taking the inner product to the equation $\eqref{Uni}_1$ with $\mathfrak{U}$ and using integration by parts, \cite[Equation (27)]{Busuioc+Iftimie_2004}, \eqref{equivalance-equation}, \eqref{b0} and \cite[Equation (30)]{Busuioc+Iftimie_2004}, we obtain 
\begin{align}\label{Uni-1}
    & \frac{1}{2}\frac{\drm}{\drm t} \bigg[\|\mathfrak{U}\|^2_2 + \alpha_1 \|\nabla\mathfrak{U}\|^2_2\bigg] + \nu \|\nabla\mathfrak{U}\|_2^2 + \frac{\beta}{2}\int_{\mathbb{T}^2} |\Arm(\mathfrak{U}(x))|^2( |\Arm(\ub_1(x)+\zb(x))|^2+|\Arm(\ub_2(x)+\zb(x))|^2)\drm x
    \nonumber\\ & + \frac{\beta}{2}\int_{\mathbb{T}^2}( |\Arm(\ub_1(x)+\zb(x))|^2-|\Arm(\ub_2(x)+\zb(x))|^2)^2\drm x 
    \nonumber\\ & = -\left< \mathcal{M}(\ub_1+\zb)-\mathcal{M}(\ub_2+\zb) , \mathfrak{U} \right>
    \nonumber\\ & = -\left< ((\ub_1+\zb)\cdot \nabla)(\ub_1+\zb) - ((\ub_2+\zb)\cdot \nabla)(\ub_2+\zb) , \mathfrak{U} \right> +\alpha_1 \left< \diver [\mathcal{N}(\ub_1+\zb)-\mathcal{N}(\ub_2+\zb)] , \mathfrak{U} \right> 
    \nonumber\\ & \quad + \alpha_2 \left< \diver[\Arm(\ub_1+\zb)^2-\Arm(\ub_2+\zb)^2] , \mathfrak{U} \right>
    \nonumber\\ & = - \underbrace{b (\mathfrak{U},\ub_1+\zb, \mathfrak{U})}_{=:U_1}  - \underbrace{\frac{\alpha_1}{2} \left< \mathcal{N}(\ub_1+\zb)-\mathcal{N}(\ub_2+\zb) , \Arm (\mathfrak{U}) \right> }_{=:U_2}
      - \underbrace{\frac{\alpha_2}{2} \left(\Arm(\mathfrak{U})\Arm(\ub_1+\zb) + \Arm(\ub_2+\zb)\Arm(\mathfrak{U}), \Arm(\mathfrak{U})\right)}_{=:U_3}.
\end{align}

Let us write the term $U_2$ in more simple way by using integration by parts as follows (see  \cite[Equations (31)-(32)]{Busuioc+Iftimie_2004} for detailed calculation): 
\begin{align}\label{Uni-2}
    U_2 & = \frac{\alpha_1}{2} \int_{\Tb^2} [(\ub_1+\zb)\cdot \nabla\Arm(\ub_1+\zb)- (\ub_2+\zb)\cdot \nabla\Arm(\ub_2+\zb)]: \Arm(\mathfrak{U}) 
    \nonumber\\ &  \quad + \frac{\alpha_1}{2} \int_{\Tb^2} [L(\ub_1+\zb)^{T}\Arm(\ub_1+\zb)+\Arm(\ub_1+\zb)L(\ub_1+\zb) - L(\ub_2+\zb)^{T}\Arm(\ub_2+\zb)+\Arm(\ub_2+\zb)L(\ub_2+\zb)]:\Arm(\mathfrak{U})
    \nonumber\\ & = \underbrace{\frac{\alpha_1}{2} \int_{\Tb^2} [\mathfrak{U}\cdot \nabla\Arm(\ub_2+\zb)]: \Arm(\mathfrak{U}) }_{=:U_{21}}
      + \underbrace{\alpha_1 \int_{\Tb^2} [\Arm(\mathfrak{U})^2:L(\ub_1+\zb)+(\Arm(\ub_2+\zb)\Arm(\mathfrak{U})):L(\mathfrak{U})]}_{=:U_{22}}.
\end{align}
Using H\"older's, Sobolev's and Young's inequalities, we find
\begin{align}
|U_1| & \leq \|\nabla(\ub_1+\zb)\|_2 \|\mathfrak{U}\|_4^2 \leq C \|\nabla(\ub_1+\zb)\|_2 \|\nabla\mathfrak{U}\|_2^2,\label{Uni-3}
\\ 
    |U_3 + U_{22}| & \leq C \int_{\Tb^2}|\Arm(\mathfrak{U}(x))|(|\Arm(\ub_1(x)+\zb(x))|+|\Arm(\ub_2(x)+\zb(x))|) |\nabla\mathfrak{U}(x)| \drm x
    \nonumber\\ & \leq C\|\nabla\mathfrak{U}\|_{2}^2 + \frac{\beta}{12} \int_{\mathbb{T}^2} |\Arm(\mathfrak{U}(x))|^2 ( |\Arm(\ub_1(x)+\zb(x))|^2+|\Arm(\ub_2(x)+\zb(x))|^2)\drm x,\label{Uni-4}
\end{align}
where $\gamma>0$ will be chosen later. Finally, we estimate $U_{21}$ which is bit more technical than other terms. Let $\sigma\in(0,1)$ be arbitrary. Then, using H\"older's inequality, \eqref{Best-constant-sobolev-embedding} and Young's inequality, we get
\begin{align}\label{Uni-5}
    |U_{21}| & \leq C \int_{\Tb^2} |\mathfrak{U}(x)| |\nabla\Arm(\ub_2(x)+\zb(x))| |\Arm(\mathfrak{U}(x))| \drm x
    \nonumber \\ & = C \int_{\Tb^2} |\mathfrak{U}(x)| |\Arm(\mathfrak{U}(x))|^{\sigma} |\nabla\Arm(\ub_2(x)+\zb(x))| |\Arm(\mathfrak{U}(x))|^{1-\sigma} \drm x
    \nonumber\\ & \leq C \|\mathfrak{U}\|_{\frac{4}{1-\sigma}}\|\Arm(\mathfrak{U})\|^{\sigma}_{2}\|\nabla\Arm(\ub_2+\zb)\|_{2}\|\Arm(\mathfrak{U})\|^{1-\sigma}_{4}
    \nonumber\\ & \leq \frac{C}{1-\sigma}  \|\nabla\mathfrak{U}\|^{1+\sigma}_{2}\|\A(\ub_2+\zb)\|_{2}\|\Arm(\mathfrak{U})\|^{1-\sigma}_{4}
    \nonumber\\ & \leq \frac{\beta}{12} \|\Arm(\mathfrak{U})\|^4_{4} +  \frac{C}{1-\sigma}  \left(\frac{\beta}{3}\right)^{\frac{\sigma-1}{\sigma+3}} \|\nabla\mathfrak{U}\|^{\frac{4(1+\sigma)}{\sigma+3}}_{2}\|\A(\ub_2+\zb)\|^{\frac{4}{\sigma+3}}_{2}
    \nonumber\\ & \leq \frac{\beta}{6} \int_{\mathbb{T}^2} |\Arm(\mathfrak{U})|^2 ( |\Arm(\ub_1+\zb)|^2+|\Arm(\ub_2+\zb)|^2) +  \frac{C}{1-\sigma}  \left(\frac{\beta}{3}\right)^{\frac{\sigma-1}{\sigma+3}} \|\nabla\mathfrak{U}\|^{\frac{4(1+\sigma)}{\sigma+3}}_{2}\|\A(\ub_2+\zb)\|^{\frac{4}{\sigma+3}}_{2}.
\end{align}
Combining \eqref{Uni-1}-\eqref{Uni-5}, we infer
\begin{align}\label{Uni-6}
     & \frac{\drm}{\drm t} \bigg[\|\mathfrak{U}(t)\|^2_2 + \alpha_1 \|\nabla\mathfrak{U}(t)\|^2_2\bigg] 
    \nonumber\\ & \leq C [1+\|\nabla(\ub_1(t)+\zb(t))\|_2] \|\nabla\mathfrak{U}(t)\|_2^2 +  \frac{C}{1-\sigma}  \left(\frac{\beta}{3}\right)^{\frac{\sigma-1}{\sigma+3}} \|\nabla\mathfrak{U}(t)\|^{\frac{4(1+\sigma)}{\sigma+3}}_{2}\|\A(\ub_2(t)+\zb(t))\|^{\frac{4}{\sigma+3}}_{2}
\end{align}
for a.e. $t\in [0,T]$. In view of \eqref{EE-Aub11}, there exists a constant $\widehat{K}:= \widehat{K}(\alpha_1,\alpha_2,\beta,\ub_1,\ub_2,\zb,T) >0$ independent of $\sigma$ such that
\begin{align}\label{Uni-7}
    \frac{\drm \mathrm{\textbf{U}}(t)}{\drm t}   \leq \frac{\widehat{K}}{1-\sigma}\bigg[\mathrm{\textbf{U}}(t) + \{\mathrm{\textbf{U}}(t)\}^{\frac{2(1+\sigma)}{\sigma+3}}\bigg]
\end{align}
for a.e. $t\in[0,T]$, where $\mathrm{\textbf{U}}(t)=\|\mathfrak{U}(t)\|^2_2 + \alpha_1 \|\nabla\mathfrak{U}(t)\|^2_2$. We also have 
\begin{align}
    \frac{\drm \mathrm{\textbf{U}}(t)}{\drm t}= \frac{\drm}{\drm t} \bigg[\{\mathrm{\textbf{U}}(t)\}^{\frac{1-\sigma}{\sigma+3}}\bigg]^{\frac{\sigma+3}{1-\sigma}}= \frac{\sigma+3}{1-\sigma}\{\mathrm{\textbf{U}}(t)\}^{\frac{2(1+\sigma)}{\sigma+3}}\frac{\drm}{\drm t} \bigg[\{\mathrm{\textbf{U}}(t)\}^{\frac{1-\sigma}{\sigma+3}}\bigg],
\end{align}
for a.e. $t\in[0,T]$, which implies from \eqref{Uni-7} and $\mathrm{\textbf{U}}(t)\geq 0$ that
\begin{align}\label{Uni-8}
    \frac{\drm}{\drm t} \bigg[\{\mathrm{\textbf{U}}(t)\}^{\frac{1-\sigma}{\sigma+3}}\bigg]   \leq \frac{\widehat{K}}{\sigma+3}\bigg[\{\mathrm{\textbf{U}}(t)\}^{\frac{1-\sigma}{\sigma+3}} + 1\bigg],
\end{align}
for a.e. $t\in[0,T]$. Now, an application of Gronwall's inequality to \eqref{Uni-8} gives
\begin{align}
    \mathrm{\textbf{U}}(t) \leq \left\{\frac{\widehat{K}t}{\sigma+3}\right\}^{\frac{\sigma+3}{1-\sigma}}e^{\widehat{K}t(1-\sigma)} \leq \left\{\frac{\widehat{K}t}{3}\right\}^{\frac{\sigma+3}{1-\sigma}}e^{\widehat{K}t(1-\sigma)},
\end{align}
for all $t\in[0,T]$. Let us denote $T_0=\frac{3}{2\widehat{K}}$, then we have
\begin{align}
    \mathrm{\textbf{U}}(t) \leq \left\{\frac12\right\}^{\frac{\sigma+3}{1-\sigma}}e^{\frac32}, \;\; \text{ for all } t\in [0,T_0].
\end{align}
If $\mathrm{\textbf{U}}(t)\geq \varepsilon$ for some $\varepsilon>0$. Then, one can always find $\sigma\in (0,1)$ close the $1$ such that 
\begin{align}
    \mathrm{\textbf{U}}(t) \leq \left\{\frac12\right\}^{\frac{\sigma+3}{1-\sigma}}e^{\frac32} \leq \frac{\varepsilon}{2}, \;\; \text{ for all } t\in [0,T_0],
\end{align}
which leads to a contradiction. 
Hence $\mathrm{\textbf{U}}(t)=0$ for all $t\in[0,T_0]$. We repeat the same arguments a finite number of times to
cover the whole interval $[0,T]$, so that the uniqueness is proved.

\subsubsection{More regular a priori estimate}\label{sec-more-regularity}
We will denote by $\mathfrak{D}^k\wb$ a linear combination of derivatives of order not greater than $k$ of the
components of $\wb$. The expression $\mathfrak{D}^k\wb_1\mathfrak{D}^l\wb_2$ denotes a product of terms of the type $\mathfrak{D}^k\wb_1$ and $\mathfrak{D}^l\wb_2$ or a sum of such terms.
		Let us recall an inequality from \cite[Proposition 1]{Busuioc_2002} which will be used in the sequel: let $\varepsilon_0>0$, then for any $\vb\in \dot{\Hb}^{1+\varepsilon_0}(\mathbb{T}^2)$
	\begin{align}\label{embedding-eps}
		\|\vb\|_{\infty} \leq \frac{C_{\ast}}{\sqrt{\varepsilon_0}}\|\vb\|_{\dot{{\Hb}}^{1+\varepsilon_0}},
	\end{align}
where $C_{\ast}$ is a universal constant independent of $\varepsilon_0$. 

Since $\{\wb_i\}_{i\in\mathbb{N}}$ is a sequence of eigenfunctions of the Stokes operator $\A$, it gives $\A^2\ub_n\in \Hb_n.$ Taking the inner product with $\A^{2}\ub_n$ in $\eqref{Finite-dimensional-approx}_1$, we obtain
\begin{align}
   &  \frac12 \frac{\drm}{\drm t}\bigg[\|\A\ub_n(t)\|_2^2 + \alpha_1 \|\A^{\frac32}\ub_n(t)\|_2^2\bigg] + \nu \|\A^{\frac32}\ub_n(t)\|_2^2 
   \nonumber\\ & =  \underbrace{-b(\ub_n(t)+\zb(t), \ub_n(t)+\zb(t),\A^2\ub_n(t))}_{:= \widehat{I}_1 } \underbrace{-\alpha_1 b(\ub_n(t)+\zb(t), \A(\ub_n(t)+\zb(t)),\A^2(\ub_n(t)+\zb(t)))}_{:= \widehat{I}_2 }
   \nonumber\\ & \quad  \underbrace{
   +\alpha_1 b(\ub_n(t)+\zb(t), \A(\ub_n(t)+\zb(t)),\A^2\zb(t))}_{:= \widehat{I}_3} - \underbrace{\alpha_1 b(\A^2\ub_n(t), \ub_n(t)+\zb(t), \A\ub_n(t)+\A\zb(t))}_{:= \widehat{I}_4 }
   \nonumber\\ & \quad  \underbrace{-\frac{(\alpha_1+\alpha_2)}{2}\int_{\Tb^2}[\Arm(\ub_n(x,t)+\zb(x,t))]^2:\Arm(\A^2\ub_n(x,t)) \drm x }_{:= \widehat{I}_5 }
   \nonumber\\ & \quad \underbrace{-\frac{\beta}{2}\int_{\Tb^2}|\Arm(\ub_n(x,t)+\zb(x,t))|^2[\Arm(\ub_n(x,t)+\zb(x,t)):\Arm(\A^2(\ub_n(x,t)+\zb(x,t)))] \drm x}_{:= \widehat{I}_6 }
   \nonumber\\ & \quad  \underbrace{{+}\frac{\beta}{2}\int_{\Tb^2}|\Arm(\ub_n(x,t)+\zb(x,t))|^2[\Arm(\ub_n(x,t)+\zb(x,t)):\Arm(\A^2\zb(x,t))] \drm x}_{:= \widehat{I}_7}
   \nonumber\\ & \quad + \underbrace{[( \fb(t), \A^2\ub_n(t))  +  (\zb(t) +   (\theta+\alpha_1-\nu)\A\zb(t) + \theta\alpha_1 \A^2\zb(t),\A^2\ub_n(t))]}_{:= \widehat{I}_8}\label{EE-A2ub1}.
\end{align}
Using integration by parts, let us rewrite each term of right hand side of \eqref{EE-A2ub1} one by one in terms of derivatives $\mathfrak{D}^k$ as follows:
\begin{align}
    \widehat{I}_1 &  = \int_{\Tb^2}\mathfrak{D}^{0}(\ub_n+\zb) \cdot \mathfrak{D}^2(\ub_n+\zb)\cdot \mathfrak{D}^3\ub_n \drm x + \int_{\Tb^2}\mathfrak{D}^{1}(\ub_n+\zb)\cdot\mathfrak{D}^1(\ub_n+\zb)\cdot \mathfrak{D}^3\ub_n \drm x
   ,\label{EE-A2ub2}\\
    \widehat{I}_2 &  = \int_{\Tb^2}\mathfrak{D}^{1}(\ub_n+\zb)\cdot \mathfrak{D}^3(\ub_n+\zb)  \cdot \mathfrak{D}^3(\ub_n+\zb) \drm x, \label{EE-A2ub3} \\
    \widehat{I}_3 &  = \int_{\Tb^2}\mathfrak{D}^{0}(\ub_n+\zb) \cdot \mathfrak{D}^3(\ub_n+\zb)\cdot \mathfrak{D}^4\zb \drm x, \label{EE-A2ub4}\\
    \widehat{I}_4 &  = \int_{\Tb^2}\mathfrak{D}^{3}\ub_n \cdot \mathfrak{D}^2(\ub_n+\zb) \cdot \mathfrak{D}^2(\ub_n+\zb) \drm x + \int_{\Tb^2}\mathfrak{D}^{3}\ub_n \cdot \mathfrak{D}^1(\ub_n+\zb) \cdot \mathfrak{D}^3(\ub_n+\zb) \drm x,\label{EE-A2ub5} \\
    \widehat{I}_5 &  = \int_{\Tb^2}\mathfrak{D}^{1}(\ub_n+\zb) \cdot \mathfrak{D}^2(\ub_n+\zb) \cdot \mathfrak{D}^3\ub_n \drm x, \label{EE-A2ub6} \\
    \widehat{I}_7 &  = \underbrace{\int_{\Tb^2} \mathfrak{D}^{1}(\ub_n+\zb) \cdot  \mathfrak{D}^{1}(\ub_n+\zb) \cdot \mathfrak{D}^3(\ub_n+\zb) \cdot \mathfrak{D}^3\zb \drm x}_{\widehat{I}_{71}} + \underbrace{\int_{\Tb^2} \mathfrak{D}^{1}(\ub_n+\zb) \cdot  \mathfrak{D}^{2}(\ub_n+\zb) \cdot \mathfrak{D}^2(\ub_n+\zb) \cdot \mathfrak{D}^3\zb \drm x}_{\widehat{I}_{72}}, \label{EE-A2ub7} \\ 
    \widehat{I}_6 & = - \frac{\beta}{2} \int_{\Tb^2} |\Delta\Arm(\ub_n+\zb)|^2 |\Arm(\ub_n+\zb)|^2 \drm x - 
    \beta \int_{\Tb^2} [\Arm(\ub_n+\zb):\Delta\Arm(\ub_n+\zb)]^2 \drm x 
    \nonumber\\ & \quad + \underbrace{\int_{\Tb^2} \mathfrak{D}^{1}(\ub_n+\zb) \cdot  \mathfrak{D}^{2}(\ub_n+\zb) \cdot \mathfrak{D}^2(\ub_n+\zb) \cdot \mathfrak{D}^3(\ub_n+\zb) \drm x}_{\widehat{I}_{61}}. \label{EE-A2ub8}
\end{align}
Equalities \eqref{EE-A2ub1}-\eqref{EE-A2ub8} provide that
\begin{align}
   &  \frac12 \frac{\drm}{\drm t}\bigg[\|\A\ub_n\|_2^2 + \alpha_1 \|\A^{\frac32}\ub_n\|_2^2\bigg] + \nu \|\A^{\frac32}\ub_n\|_2^2 
   \nonumber\\ & \leq \widehat{I}_8 + \widehat{I}_{71} + \widehat{I}_{72} + \widehat{I}_{61} + \underbrace{\int_{\Tb^2}\mathfrak{D}^{1}(\ub_n+\zb)\cdot\mathfrak{D}^3(\ub_n+\zb)\cdot \mathfrak{D}^3(\ub_n+\zb) \drm x}_{\widehat{I}_9} + \underbrace{\int_{\Tb^2}\mathfrak{D}^{1}(\ub_n+\zb)\cdot\mathfrak{D}^3(\ub_n+\zb)\cdot \mathfrak{D}^4\zb \drm x}_{\widehat{I}_{10}}
   \nonumber\\ & \quad + \underbrace{\int_{\Tb^2}\mathfrak{D}^{3}(\ub_n+\zb) \cdot \mathfrak{D}^2(\ub_n+\zb) \cdot \mathfrak{D}^2(\ub_n+\zb) \drm x}_{\widehat{I}_{11}} + \underbrace{\int_{\Tb^2}\mathfrak{D}^{3}\zb \cdot \mathfrak{D}^2(\ub_n+\zb) \cdot \mathfrak{D}^2(\ub_n+\zb) \drm x}_{\widehat{I}_{12}}.\label{EE-A2ub9}
\end{align}
Let us now estimate each term of right hand side of \eqref{EE-A2ub9} using H\"older's, Sobolev's, \eqref{embedding-eps}, interpolation and Young's inequalities, as follows:
\begin{align}
    |\widehat{I}_8| & \leq C \|\fb\|^2_{\dot{\Hb}^1} + \|\zb\|^2_{\dot{\Hb}^5} + \frac{\nu}{2} \|\A^{\frac32}\ub_n\|_2^2,\label{EE-A2ub10}\\
    |\widehat{I}_{71}| & \leq C \|\mathfrak{D}^1(\ub_n+\zb)\|^2_{\infty}\|\mathfrak{D}^3(\ub_n+\zb)\|_{2}\|\mathfrak{D}^3\zb\|_{2}\nonumber\\
    & \leq \frac{C}{\sqrt{\varepsilon_0}} \|\ub_n+\zb\|_{\dot{\Hb}^{2+\varepsilon_0}} \|\ub_n+\zb\|^2_{\dot{\Hb}^3}\|\zb\|_{\dot{\Hb}^3}
    \leq \frac{C}{\sqrt{\varepsilon_0}} \|\ub_n+\zb\|_{\dot{\Hb}^{2}}^{1-\varepsilon_0} \|\ub_n+\zb\|^{2+\varepsilon_0}_{\dot{\Hb}^3}\|\zb\|_{\dot{\Hb}^3},\label{EE-A2ub11}\\
    |\widehat{I}_{72}| & \leq C \|\mathfrak{D}^1(\ub_n+\zb)\|_{\infty}\|\mathfrak{D}^2(\ub_n+\zb)\|^2_{4}\|\mathfrak{D}^3\zb\|_{2}\nonumber\\
    & \leq \frac{C}{\sqrt{\varepsilon_0}} \|\ub_n+\zb\|_{\dot{\Hb}^{2+\varepsilon_0}} \|\ub_n+\zb\|^2_{\dot{\Hb}^3}\|\zb\|_{\dot{\Hb}^3}
    \leq \frac{C}{\sqrt{\varepsilon_0}} \|\ub_n+\zb\|_{\dot{\Hb}^{2}}^{1-\varepsilon_0} \|\ub_n+\zb\|^{2+\varepsilon_0}_{\dot{\Hb}^3}\|\zb\|_{\dot{\Hb}^3},\label{EE-A2ub12}\\
    |\widehat{I}_{61}| & \leq C \|\mathfrak{D}^1(\ub_n+\zb)\|_{\infty}\|\mathfrak{D}^2(\ub_n+\zb)\|^2_{4}\|\mathfrak{D}^3(\ub_n+\zb)\|_{2}\nonumber\\
    & \leq \frac{C}{\sqrt{\varepsilon_0}} \|\ub_n+\zb\|_{\dot{\Hb}^{2+\varepsilon_0}} \|\ub_n+\zb\|^2_{\dot{\Hb}^{\frac52}}\|\ub_n+\zb\|_{\dot{\Hb}^3} 
    \leq \frac{C}{\sqrt{\varepsilon_0}} \|\ub_n+\zb\|_{\dot{\Hb}^{2}}^{2-\varepsilon_0} \|\ub_n+\zb\|^{2+\varepsilon_0}_{\dot{\Hb}^3},\label{EE-A2ub13}\\
    |\widehat{I}_{9}| & \leq C \|\mathfrak{D}^1(\ub_n+\zb)\|_{\infty}\|\mathfrak{D}^3(\ub_n+\zb)\|^2_{2}\nonumber\\
    & \leq \frac{C}{\sqrt{\varepsilon_0}} \|\ub_n+\zb\|_{\dot{\Hb}^{2+\varepsilon_0}} \|\ub_n+\zb\|^2_{\dot{\Hb}^3}
    \leq \frac{C}{\sqrt{\varepsilon_0}} \|\ub_n+\zb\|_{\dot{\Hb}^{2}}^{1-\varepsilon_0} \|\ub_n+\zb\|^{2+\varepsilon_0}_{\dot{\Hb}^3},\label{EE-A2ub14}\\
    |\widehat{I}_{10}| & \leq C \|\mathfrak{D}^1(\ub_n+\zb)\|_{\infty}\|\mathfrak{D}^3(\ub_n+\zb)\|_{2}\|\mathfrak{D}^4\zb\|_{2}
    \leq C \|\zb\|_{\dot{\Hb}^4}  \|\ub_n+\zb\|^2_{\dot{\Hb}^3},\label{EE-A2ub15}\\
    |\widehat{I}_{11}| & \leq C  \|\mathfrak{D}^2(\ub_n+\zb)\|^2_{4}\|\mathfrak{D}^3(\ub_n+\zb)\|_{2}\nonumber\\
    & \leq C \|\ub_n+\zb\|^2_{\dot{\Hb}^{\frac52}}\|\ub_n+\zb\|_{\dot{\Hb}^3}
    \leq C \|\ub_n+\zb\|_{\dot{\Hb}^{2}} \|\ub_n+\zb\|^{2}_{\dot{\Hb}^3},\label{EE-A2ub16}\\
    |\widehat{I}_{12}| & \leq C  \|\mathfrak{D}^2(\ub_n+\zb)\|^2_{4}\|\mathfrak{D}^3\zb\|_{2} \leq C  \|\zb\|_{\dot{\Hb}^3} \|\ub_n+\zb\|^2_{\dot{\Hb}^{3}}.\label{EE-A2ub17}
\end{align}
Making use of \eqref{EE-A2ub10}-\eqref{EE-A2ub17} in \eqref{EE-A2ub9}, we reach at
\begin{align}
   &   \frac{\drm}{\drm t}\bigg[\|\A\ub_n(t)\|_2^2 + \alpha_1 \|\A^{\frac32}\ub_n(t)\|_2^2\bigg]  + 2\nu \|\A^{\frac32}\ub_n(t)\|_2^2
   \nonumber\\ & \leq C \|\fb(t)\|^2_{\dot{\Hb}^1} + \|\zb(t)\|^2_{\dot{\Hb}^5} + C \bigg[\|\ub_n(t)+\zb(t)\|_{\dot{\Hb}^{2}} + \|\zb(t)\|_{\dot{\Hb}^{4}} \bigg]\|\ub_n(t)+\zb(t)\|^{2}_{\dot{\Hb}^3} 
   \nonumber\\ & \quad + \frac{C}{\sqrt{\varepsilon_0}}\bigg[\|\ub_n(t)+\zb(t)\|_{\dot{\Hb}^{2}}^{1-\varepsilon_0} \|\zb(t)\|_{\dot{\Hb}^3} + \|\ub_n(t)+\zb(t)\|_{\dot{\Hb}^{2}}^{2-\varepsilon_0} + \|\ub_n(t)+\zb(t)\|_{\dot{\Hb}^{2}}^{1-\varepsilon_0}\bigg]\|\ub_n(t)+\zb(t)\|^{2+\varepsilon_0}_{\dot{\Hb}^3},
\end{align}
which implies that $\mathbf{Y}(t) = 1 + \|\A \ub_n(t)\|^2_{2}+\alpha_1\|\A^{\frac{3}{2}}\ub_n(t)\|^2_{2} $  satisfies
\begin{align}
    \frac{\drm\mathbf{Y}(t)}{\drm t} & \leq C \bigg[1+  \|\fb(t)\|^2_{\dot{\Hb}^1} + \|\zb(t)\|^6_{\dot{\Hb}^5} \bigg] 
      + \frac{C}{\sqrt{\varepsilon_0}} \bigg[1+ \|\A\ub_n(t)\|^2_2+\|\zb(t)\|^6_{\dot{\Hb}^5} \bigg]\{\mathbf{Y}(t)\}^{1+\frac{\varepsilon_0}{2}}.
\end{align}
Therefore, we obtain
	\begin{align}
		\frac{\drm}{\drm t}\left[\left\{\mathbf{Y}(t)\right\}^{-\frac{\varepsilon_0}2}\right] & = - \frac{\varepsilon_0}{2\mathbf{Y}(t)^{1+\frac{\varepsilon_0}2}}\frac{\drm\mathbf{Y}(t)}{\drm t}
		\nonumber\\ & \geq  - \frac{\varepsilon_0}{2\mathbf{Y}(t)^{1+\frac{\varepsilon_0}2}}\bigg[C \big[1+  \|\fb(t)\|^2_{\dot{\Hb}^1} + \|\zb(t)\|^6_{\dot{\Hb}^5} \big] 
      + \frac{C}{\sqrt{\varepsilon_0}} \big[1+ \|\A\ub_n(t)\|^2_2+\|\zb(t)\|^6_{\dot{\Hb}^5} \big]\{\mathbf{Y}(t)\}^{1+\frac{\varepsilon_0}{2}}\bigg]
	\nonumber\\ & \geq  - {C}{\sqrt{\varepsilon_0}} \left[ 1+ \|\fb(t)\|^2_{\dot{\Hb}^1} +\|\zb(t)\|^6_{\dot{\Hb}^5} + \|\A \ub_n(t)\|^2_2  \right],
	\end{align}
	which implies
	\begin{align}
		\left\{\mathbf{Y}(t)\right\}^{-\frac{\varepsilon_0}2} \geq \left\{\mathbf{Y}(0)\right\}^{-\frac{\varepsilon_0}2} - {C}{\sqrt{\varepsilon_0}} \int_{0}^{t} \left[ 1+ \|\fb(s)\|_{\dot{\Hb}^{1}}^2 +\|\zb(s)\|^6_{\dot{\Hb}^5}  + \|\A \ub_n(s)\|^2_2  \right] \drm s.
	\end{align}
    In view of \eqref{EE-Aub11} and the fact that $\fb\in \mathrm{L}^2(0,T;\dot{\Hb}^{1}(\mathbb{T}^2))$ and $\zb\in \mathrm{C}([0,T];\dot{\Hb}^{5}(\mathbb{T}^2))$, we infer that, for any given time $T$, there exists $0<\varepsilon_*=\varepsilon_*(T,\omega)\leq 1$ such that for any $t\in[0,T]$
	\begin{align}
		   {C} \int_{0}^{t} \left[ 1+ \|\fb(s)\|_{\dot{\Hb}^{1}}^2 + \|\zb(s)\|_{\dot{\Hb}^{5}}^6  + \|\A \ub_n(s)\|^2_2  \right] \drm s \leq \frac{{\varepsilon_*}^{-\frac12}}{2} \left\{\mathbf{Y}(0)\right\}^{-\frac{\varepsilon_*}2}.
	\end{align}
Note that such an $\varepsilon_*$ exists as the limit when $\varepsilon_0\to 0$ of the right-hand side
is $+\infty$. Moreover, one can choose $\varepsilon_*$ small enough which will be suitable for any given $T>0$ and any given $\omega\in\widetilde{\Omega}$, that is, independent of $T$ and $\omega$. Hence, we obtain
	\begin{align}
	\left\{\mathbf{Y}(t)\right\}^{-\frac{\varepsilon_*}2} \geq \frac{\left\{\mathbf{Y}(0)\right\}^{-\frac{\varepsilon_*}2}}{2},
\end{align}
for $t\in[0,T]$, equivalently
\begin{align}\label{H3-regularity-n}
	\|\A \ub_n(t)\|^2_{2}+\alpha_1\|\A^{\frac{3}{2}}\ub_n(t)\|^2_{2} 
    & \leq  4^{\frac{1}{\varepsilon_*}} (1+ \|\A \ub_n(0)\|^2_{2}+\alpha_1\|\A^{\frac{3}{2}}\ub_n(0)\|^2_{2})
    \nonumber\\ & \leq  4^{\frac{1}{\varepsilon_*}} (1+ \|\A \ub(0)\|^2_{2}+\alpha_1\|\A^{\frac{3}{2}}\ub(0)\|^2_{2})
    \nonumber\\ & \leq  4^{\frac{1}{\varepsilon_*}} (1+ \|\A \vb_0\|^2_{2}+\alpha_1\|\A^{\frac{3}{2}}\vb_0\|^2_{2}),
\end{align}
for all $t\in[0,T]$. This gives that $\{\ub_n\}_{n\in\N}$ is a bounded sequence in $\mathrm{L}^{\infty}(0,T; \Drm(\A^{\frac{3}{2}}))$, consequently the unique solution $\ub$ to system \eqref{random-third-grade-fluids-equations} belongs to $\mathrm{L}^{\infty}(0,T;\Drm(\A^{\frac{3}{2}}))$ satisfying
\begin{align}\label{H3-regularity}
	\|\A \ub(t)\|^2_{2}+\alpha_1\|\A^{\frac{3}{2}}\ub(t)\|^2_{2}   \leq  4^{\frac{1}{\varepsilon_*}} (1+ \|\A \vb_0\|^2_{2}+\alpha_1\|\A^{\frac{3}{2}}\vb_0\|^2_{2}),
\end{align}
for all $t\in[0,T]$. 
\subsubsection{Regularity of solution of system \texorpdfstring{\eqref{STGF-Projected}}{}.}
Since, $\vb_0\in \Drm(\A^{\frac32})$ is deterministic, we achieve from \eqref{H3-regularity} that $\ub\in \Lrm^{p}(\Omega;\Lrm^{\infty}(0,T;\Drm(\A^{\frac32})))$, for any $p\geq2$. Hence, in view of \eqref{Z-regularity}, we have $\vb\in\mathrm{L}^{p}(\Omega;\mathrm{L}^{\infty}(0,T;\Drm(\A^{\frac32})))$, $p\geq2$. On the other hand, for given $\hat{c}>0$, let us choose $\theta> \hat{c} + \frac{\| GG^{\ast}\|_{Op}}{\lambda_1}$ and consider  
\begin{align}
   \Eb\bigg[\exp\bigg\{\hat{c}\int_0^t\|\vb(s)\|^{2}_{\dot{\Hb}^3} \drm s \bigg\}\bigg] & \leq \Eb\bigg[\exp\bigg\{2\hat{c}\left[\int_0^t\|\ub(s)\|^{2}_{\dot{\Hb}^3}+\int_0^t\|\zb(s)\|^{2}_{\dot{\Hb}^3}\right] \drm s \bigg\}\bigg]
   \nonumber\\ & \leq \exp\bigg\{2\hat{c} \int_0^t\|\ub(s)\|^{2}_{\dot{\Hb}^3} \drm s \bigg\}\Eb\bigg[\exp\bigg\{2\hat{c}\int_0^t\|\zb(s)\|^{2}_{\dot{\Hb}^3} \drm s \bigg\}\bigg] < + \infty,
\end{align}
for all $t\in[0,T]$, where we have used \eqref{H3-regularity} and \eqref{Z-exponential-regularity}. This completes the proof of Theorem \ref{Wellposedness-state}.

\section{Linearized state equation}\label{Sec-Linearized}\setcounter{equation}{0}\label{section-4}
In this section, we establish well-posedness results for the linearized state equation. Let $T$ represent the time horizon for which the unique solution $\vb$ to the state equation \eqref{STGF-Projected}, as defined in Definition \ref{def-ranndom-TGF}, exists and satisfies \eqref{eqn-vb-exponential-moments}. Additionally, recall that $\Arm(\vb)= \nabla \vb + (\nabla \vb)^{T}$,  with its components denoted as $[\Arm(\vb)]_{ij}:=a_{ij}$.	Let $\boldsymbol{\psi}: \Omega \times \mathbb{T}^2\times [0,T]\to \mathbb{R}^2$ be a force such that $\boldsymbol{\psi} \in \Lrm^p(\Omega; \mathrm{L}^2(0,T; \dot{\Lb}^2(\mathbb{T}^2)))$ (for some $p>2$) and  consider the following system:
\begin{equation}
	\left\{
	\begin{aligned}
		\partial_t (\mfrk - \alpha_1 \Delta \mfrk) &=  \boldsymbol{\psi} -\nabla\pi + \nu\Delta \mfrk - (\vb\cdot\nabla)(\mfrk - \alpha_1 \Delta \mfrk) - (\mfrk\cdot \nabla) (\vb - \alpha_1 \Delta \vb) 
         -\displaystyle\sum_{j=1}^2 [\mfrk - \alpha_1 \Delta \mfrk]^j\nabla \vb^j 
         \\
          & \quad - \displaystyle\sum_{j=1}^2 [\vb - \alpha_1 \Delta \vb]^j\nabla \mfrk^j 
         + (\alpha_1+\alpha_2) \mathrm{div}(\Arm(\vb) \Arm(\mfrk)+ \Arm(\mfrk)\Arm(\vb))  \\
		  & \quad +\beta [\mathrm{div}(\vert \Arm(\vb)\vert^{2}\Arm(\mfrk))]+2\beta  {\rm div}[(\Arm(\mfrk):\Arm(\vb))\Arm(\vb)],&& \hspace{-20mm} \text{in }  \mathbb{T}^2 \times (0, \infty),\\
		\text{div}\; \mfrk&=0, \quad && \hspace{-20mm} \text{in } \mathbb{T}^2 \times [0,\infty),\\
		\mfrk(x,0)&=\boldsymbol{0}, \quad && \hspace{-20mm} \text{in } \mathbb{T}^2,
	\end{aligned}
	\right.\label{eqn-linearized-pi}
\end{equation}
with periodic boundary conditions, where $\mfrk =(\mfrk^{1},\mfrk^2)$ and $\pi$ are unknown vector and scalar fields, respectively. Applying  the projection $\mathcal{P}$ to the system \eqref{eqn-linearized-pi}, we obtain
\begin{equation}
	\left\{
	\begin{aligned}
		\partial_t \Upsilon(\mfrk ) &=  \Pcal\boldsymbol{\psi} - \nu\A \mfrk - \Pcal[(\vb\cdot\nabla)\Upsilon(\mfrk )] - \Pcal[(\mfrk\cdot \nabla) \Upsilon(\vb)] 
         -\Pcal\bigg[\displaystyle\sum_{j=1}^2[\Upsilon(\mfrk )]^j\nabla \vb^j \bigg] 
          - \Pcal\bigg[\displaystyle\sum_{j=1}^2[\Upsilon(\vb)]^j\nabla \mfrk^j\bigg] 
        \\
        \vspace{2mm} & \quad  + (\alpha_1+\alpha_2) \Pcal [\mathrm{div}(\Arm(\vb) \Arm(\mfrk)+ \Arm(\mfrk)\Arm(\vb))] +\beta \Pcal [\mathrm{div}(\vert \Arm(\vb)\vert^{2}\Arm(\mfrk))]+2\beta \Pcal [{\rm div}[(\Arm(\mfrk):\Arm(\vb))\Arm(\vb)]],\\
		\mfrk(0)&=\boldsymbol{0},
	\end{aligned}
	\right.\label{eqn-linearized}
\end{equation}
in $\Vb'$.

\begin{definition}\label{def-LSTGF}
 Assume that $\boldsymbol{\psi} \in \Lrm^p(\Omega; \mathrm{L}^2(0,T; \dot{\Lb}^2(\mathbb{T}^2)))$ (for some $p>2$).
	A stochastic process $\{\mfrk(t)\}_{t\geq0}$ with trajectories in  $\mathrm{C}([0,{T}]; \Vb)\cap \mathrm{L}^{\infty}(0,{T};\Drm(\A))$,  $\partial_t\mfrk \in\mathrm{L}^{2}(0,{T};\Vb)$ and $\mfrk(0)=\boldsymbol{0}$, 
	is called a \emph{  solution} to system \eqref{eqn-linearized} 
    if  for any $\phi\in \Vb$ the following equation holds 
		\begin{align}\label{W-linearized}
		\left\langle\partial_t\Upsilon(\mfrk(t)),\phi\right\rangle  &=-\int_0^t\langle\nu \A\ub(s) - (\alpha_1+\alpha_2) \mathrm{div}(\Arm(\vb) \Arm(\mfrk)+ \Arm(\mfrk)\Arm(\vb)) -\beta \mathrm{div}(\vert \Arm(\vb)\vert^{2}\Arm(\mfrk))
            \nonumber\\& \quad -2\beta {\rm div}((\Arm(\mfrk):\Arm(\vb))\Arm(\vb)) ,\phi\rangle\drm s
             +\int_0^t b(\vb(s),\phi(s),\Upsilon(\mfrk(s))) \drm s +\int_0^t b(\mfrk(s),\phi(s),\Upsilon(\vb(s))) \drm s  
             \nonumber \\ & \quad - \int_0^t b(\phi(s), \vb(s),\Upsilon(\mfrk(s)))  \drm s - \int_0^t b(\phi(s), \mfrk(s),\Upsilon(\vb(s)))  \drm s,
		\end{align}
		for a.e. $t\in[0,T]$ and $\Pb$-a.s.
\end{definition}

\begin{theorem}\label{solution-L}
		Let $T>0$, $\boldsymbol{\psi} \in \Lrm^p(\Omega; \mathrm{L}^2(0,T; \dot{\Lb}^2(\mathbb{T}^2)))$ (for some $p>2$) and $\vb$ be the solution to the state equation \eqref{STGF-Projected} in the sense of Definition \ref{def-Solution} satisfying \eqref{eqn-vb-exponential-moments}. Then, there exists a unique solution $\mfrk\in \Lrm^2(\Omega; \mathrm{L}^{\infty}(0,T; \Vb))$ to system \eqref{eqn-linearized} in the sense of Definition \ref{def-LSTGF}.
	\end{theorem}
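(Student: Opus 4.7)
The plan is to implement a Galerkin scheme, exploiting both the linearity of \eqref{eqn-linearized} in $\mfrk$ and the exponential integrability of $\|\vb(\cdot)\|_{\dot{\Hb}^3}^2$ supplied by \eqref{eqn-vb-exponential-moments}. Working in the Stokes eigenbasis $\{\wb_i\}_{i\in\mathbb{N}}$, I would seek $\mfrk_n(t)=\sum_{i=1}^{n} h_i^n(t)\wb_i$ satisfying the projection of \eqref{eqn-linearized} onto $\Hb_n$ with $\mfrk_n(0)=\boldsymbol{0}$. Because the resulting system is \emph{linear} in $\mfrk_n$ with measurable time-dependent coefficients (built from $\vb$, whose norms are path-wise integrable on $[0,T]$), classical linear ODE theory furnishes a unique global solution $\mfrk_n$ on $[0,T]$ for $\Pb$-almost every $\omega$.

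For the a priori estimate, I would test the projected equation with $\mfrk_n$. The time derivative produces $\tfrac{1}{2}\frac{\drm}{\drm t}(\|\mfrk_n\|_2^2+\alpha_1\|\nabla\mfrk_n\|_2^2)$, the Stokes term gives the dissipation $\nu\|\nabla\mfrk_n\|_2^2$, and the $\beta$-contributions supply the nonnegative quantities $\tfrac{\beta}{2}\int_{\mathbb{T}^2}|\Arm(\vb)|^2|\Arm(\mfrk_n)|^2\,\drm x$ and $\beta\int_{\mathbb{T}^2}(\Arm(\mfrk_n):\Arm(\vb))^2\,\drm x$ on the left-hand side. After integration by parts, every remaining coupling term reduces to an integral of products of derivatives of $\vb$ (up to order three) against derivatives of $\mfrk_n$ (up to order one), so the 2D embedding $\dot{\Hb}^3(\mathbb{T}^2)\hookrightarrow\mathbb{W}^{1,\infty}(\mathbb{T}^2)$, together with H\"older and Young inequalities, gives bounds of the generic form $C(1+\|\vb\|_{\dot{\Hb}^3}^2)\|\mfrk_n\|_\Vb^2$. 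Gronwall then yields
\begin{equation*}
\sup_{t\in[0,T]}\|\mfrk_n(t)\|_\Vb^2 \leq C\left(\int_0^T\|\boldsymbol{\psi}(s)\|_2^2\,\drm s\right)\exp\left\{C\int_0^T\bigl(1+\|\vb(s)\|_{\dot{\Hb}^3}^2\bigr)\,\drm s\right\},
\end{equation*}
and after taking expectation, applying H\"older with the conjugate pair $(p/2,p/(p-2))$, and invoking \eqref{eqn-vb-exponential-moments} with $\hat{c}$ chosen strictly larger than the Gronwall constant $C$, I would obtain the uniform bound $\sup_n\Eb[\|\mfrk_n\|_{\mathrm{L}^\infty(0,T;\Vb)}^2]<\infty$. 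An analogous computation, now testing with $\A\mfrk_n$, delivers the $\Lrm^2(\Omega;\mathrm{L}^\infty(0,T;\Drm(\A)))$ bound, from which $\partial_t\Upsilon(\mfrk_n)$ is uniformly controlled in $\Lrm^2(0,T;\Vb')$.

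Banach-Alaoglu would then produce weak-$\ast$ subsequential limits in the spaces above, and Aubin-Lions would furnish strong convergence in $\Lrm^2(0,T;\Vb)$; passage to the limit in the weak formulation \eqref{W-linearized} is routine because every term is \emph{linear} in $\mfrk_n$. Uniqueness follows from the same energy estimate applied to the difference of two solutions, which solves the homogeneous linear equation with zero data. The main technical obstacle will be to perform the integration-by-parts reductions of the coupling terms in such a way that only $\|\vb\|_{\dot{\Hb}^3}^2$ appears in the Gronwall exponent --- no higher-order norms of $\vb$ should enter --- for this is precisely what makes the exponential factor integrable via \eqref{eqn-vb-exponential-moments}, after choosing $\theta$ large enough in Proposition \ref{thm_z_alpha} to push $\hat{c}$ past the Gronwall constant.
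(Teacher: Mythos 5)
Your proposal is correct and follows essentially the same route as the paper: Galerkin approximation in the Stokes eigenbasis, pathwise energy estimates tested against $\mfrk_n$ and $\A\mfrk_n$ with the $\beta$-terms kept as nonnegative contributions on the left, Gronwall with an exponent controlled by $\int_0^T\|\vb\|_{\dot{\Hb}^3}^2\,\drm s$, expectation via H\"older with exponents $(p/2,p/(p-2))$ and the exponential moment bound \eqref{eqn-vb-exponential-moments}, then Banach--Alaoglu/Aubin--Lions and a linear uniqueness argument. The only step you omit is testing against $\partial_t\mfrk_n$ to obtain $\partial_t\mfrk_n\in\mathrm{L}^2(0,T;\Vb)$ as required by Definition \ref{def-LSTGF}, but this is a routine addition in the same spirit.
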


\begin{proof}[Proof of Theorem \ref{solution-L}]
Due to assumptions, we have $\boldsymbol{\psi} \in \mathrm{L}^{2}(0,T; \dot{\Lb}^2(\mathbb{T}^2))$ and $\vb \in \mathrm{L}^{\infty}(0,T; \dot{\Hb}^3(\mathbb{T}^2))$, $\Pb$-a.s. Therefore, we will consider the system \eqref{eqn-linearized} pathwise and do the analysis accordingly.
After showing the pathwise well-posedness, we will verify that the second-order moments are finite.  The proof is divided into the following five steps.
		\vskip 2mm
		\noindent
		\textbf{Step I.} \textit{Finite-dimensional approximation.} Let us consider the following approximate equation for system \eqref{eqn-linearized} on the finite-dimensional space $\Hb_n$:
    \begin{equation}\label{finite-dimS-linerarized}
			\left\{
			\begin{aligned}
		\partial_t (\Upsilon(\mfrk_n)) &=  \Prm_n\Pcal\boldsymbol{\psi} - \nu\A \mfrk_n - \Prm_n\Pcal[(\vb\cdot\nabla)\Upsilon(\mfrk_n)] - \Pcal[(\mfrk_n\cdot \nabla) \Upsilon(\vb)] 
           -\Prm_n\Pcal\bigg[\displaystyle\sum_{j=1}^2[\Upsilon(\mfrk_n)]^j\nabla \vb^j \bigg] 
        \\
        \vspace{2mm} & \quad - \Prm_n\Pcal\bigg[\displaystyle\sum_{j=1}^2[\Upsilon(\vb)]^j\nabla (\mfrk_n)^j\bigg] 
          + (\alpha_1+\alpha_2)\Prm_n \Pcal [\mathrm{div}(\Arm(\vb) \Arm(\mfrk_n)+ \Arm(\mfrk_n)\Arm(\vb))]\vspace{2mm} \\
		\vspace{2mm} & \quad +\beta \Prm_n\Pcal [[\mathrm{div}(\vert \Arm(\vb)\vert^{2}\Arm(\mfrk_n))]]+2\beta \Prm_n\Pcal [{\rm div}[(\Arm(\mfrk_n):\Arm(\vb))\Arm(\vb)]],\\
				\mfrk_n(0)&=\boldsymbol{0},
	\end{aligned}
			\right.
		\end{equation}
	for a.e. $t\in[0,{T}]$. Note that system \eqref{finite-dimS-linerarized} defines a system of linear ordinary differential equations, which has a unique local solution $\mfrk_n\in\mathrm{C}([0,{T}_n];\Hb_n)$, for some $0<{T}_n\leq  {T}$. The following uniform a priori estimates show that the time ${T}_n={T}$. 
	
	\vskip 2mm
	\noindent
	\textbf{Step II.} \textit{A priori estimates.} 	Taking the inner product with $\mfrk_n$ to $\eqref{finite-dimS-linerarized}_1$, integrating by parts and using \eqref{b0}, we obtain 
\begin{align}\label{lin-ub1}
  & \frac12 \frac{\drm}{\drm t}\bigg[\|\mfrk_n(t)\|_2^2+\alpha_1 \|\nabla\mfrk_n(t)\|_2^2\bigg] + \nu \|\nabla\mfrk_n(t)\|_2^2 + \frac{\beta}{2}\||\Arm(\vb(t))|\Arm(\mfrk_n(t))\|^2_2
    + \beta \int_{\mathbb{T}^2} (\Arm(\mfrk_n(x,t)):\Arm(\vb(x,t)))^2 \drm x 
   \nonumber\\ &= \underbrace{(\boldsymbol{\psi}(t), \mfrk_n(t))}_{=:M_1} - \underbrace{\alpha_1 b ( \vb(t), \mfrk_n(t), \Delta \mfrk_n(t))}_{=:M_2} - \underbrace{b ( \mfrk_n(t), \vb(t), \mfrk_n (t))}_{=:M_3} + \underbrace{\alpha_1  b (\mfrk_n (t), \vb(t), \Delta \mfrk_n(t))  }_{=:M_4}
   \nonumber\\ & \quad - \underbrace{(\alpha_1+\alpha_2) \left\langle \Arm(\vb(t)) \Arm(\mfrk_n(t)) +  \Arm(\mfrk_n(t))\Arm(\vb(t))), \nabla\mfrk_n(t)\right\rangle}_{=:M_5},
   \end{align}
for a.e. $t\in[0,T]$. Using integration by parts, H\"older's, Sobolev's, Ladyzhenskaya's, Young's inequalities,  we deduce
\begin{align}
    |M_1|& \leq  C \|\boldsymbol{\psi}\|^2_2 + C\|\mfrk_n\|^2_2,\label{lin-ub2}\\
    |M_2|& = \alpha_1 \left|\int_{\Tb^2}\mathfrak{D}^1\vb\cdot\mathfrak{D}^1\mfrk_n\cdot\mathfrak{D}^1\mfrk_n \drm x\right| \leq C\|\nabla\vb\|_{\infty}\|\nabla\mfrk_n\|^2_2 \leq C \|\vb\|_{\dot{\Hb}^3}\|\nabla\mfrk_n\|^2_2,\label{lin-ub3}\\
    |M_3|& \leq C\|\nabla\vb\|_{\infty}\|\mfrk_n\|^2_2 \leq C \|\vb\|_{\dot{\Hb}^3}\|\nabla\mfrk_n\|^2_2,\label{lin-ub4}\\
    |M_4|& = \left|\int_{\Tb^2}\mathfrak{D}^1\mfrk_n\cdot\mathfrak{D}^1\vb\cdot\mathfrak{D}^1\mfrk_n \drm x + \int_{\Tb^2} \mathfrak{D}^0\mfrk_n\cdot\mathfrak{D}^2\vb\cdot\mathfrak{D}^1\mfrk_n \right| 
    \nonumber\\ & \leq C \|\nabla\vb\|_{\infty}\|\nabla\mfrk_n\|^2_2 + C \|\mathfrak{D}^2\vb\|_{4}\|\mfrk_n\|_4 \|\nabla\mfrk_n\|_2
      \leq  C \|\vb\|_{\dot{\Hb}^3} \|\nabla\mfrk_n\|_2^{2},\label{lin-ub5}\\
      |M_5|& \leq \frac{\beta}{4}\||\Arm(\vb)|\Arm(\mfrk_n)\|^2_2 + C \|\nabla\mfrk_n\|^2_2\label{lin-ub6}.
\end{align}
Combining \eqref{lin-ub1}-\eqref{lin-ub6}, we achieve
\begin{align}\label{lin-ub7}
  &  \frac{\drm}{\drm t}\bigg[\|\mfrk_n(t)\|_2^2+\alpha_1 \|\nabla\mfrk_n(t)\|_2^2\bigg] \leq C \|\boldsymbol{\psi}(t)\|^2_2+ C \{1+ \|\vb(t)\|_{\dot{\Hb}^3}\}\bigg[\|\mfrk_n(t)\|_2^2+\alpha_1 \|\nabla\mfrk_n(t)\|_2^2\bigg],
   \end{align}
   for a.e. $t\in[0,T]$, and the Gronwall inequality implies
\begin{align}\label{lin-ub8}
\|\mfrk_n(t)\|_2^2+\alpha_1 \|\nabla\mfrk_n(t)\|_2^2  \leq  C  e^{Ct + C \int_0^t \|\vb(s)\|_{\dot{\Hb}^3}\drm s}  \int_0^t \|\boldsymbol{\psi}(s)\|^2_2 \drm s,
 \end{align}
for all $t\in[0,{T}]$. Using the fact that 
        $\boldsymbol{\psi} \in\mathrm{L}^{2}(0,{T};\dot{\Lb}^2(\mathbb{T}^2))$ and $\vb\in \mathrm{L}^{\infty}(0, {T};\dot{\Hb}^3(\mathbb{T}^2))$, we have from \eqref{lin-ub8} that
		\begin{align}\label{lin-ub9}
			\{\mfrk_n\}_{n\in\N} \text{ is a bounded sequence in }\mathrm{L}^{\infty}(0,{T};\Vb).
		\end{align} 
        
Next, since $\{\wb_i\}_{i\in\mathbb{N}}$ is a sequence of eigenfunctions of the Stokes operator $\A$, it gives $\A\mfrk_n\in \Hb_n$, therefore taking the inner product with $\A\mfrk_n$ to $\eqref{finite-dimS-linerarized}_1$ and using \eqref{b0}, we obtain 
\begin{align}\label{lin-ub10}
  & \frac12 \frac{\drm}{\drm t}\bigg[\|\nabla\mfrk_n(t)\|_2^2+\alpha_1 \|\A\mfrk_n(t)\|_2^2\bigg] + \nu \|\A\mfrk_n(t)\|_2^2 
   \nonumber\\ &=   \underbrace{(\boldsymbol{\psi}(t),\A\mfrk_n(t))}_{\widetilde{M}_1}  - \underbrace{b( \vb(t), \mfrk_n(t) , \A\mfrk_n(t))}_{\widetilde{M}_2} - \underbrace{b(\mfrk_n(t) , \Upsilon(\vb(t)), \A\mfrk_n(t))}_{\widetilde{M}_3} 
    - \underbrace{b(\A\mfrk_n(t), \vb(t), \Upsilon(\mfrk_n(t))  )}_{\widetilde{M}_4} \nonumber\\ & \quad + \underbrace{b(\A\mfrk_n(t), \Upsilon(\vb(t)) , \mfrk_n(t))}_{\widetilde{M}_5}  + \underbrace{(\alpha_1+\alpha_2) \int_{\mathbb{T}^2}\diver [\Arm(\vb(t)) \Arm(\mfrk_n(t))+\Arm(\mfrk_n(t))\Arm(\vb(t))]\cdot \A\mfrk_n(t)\drm x }_{\widetilde{M}_6}
     \nonumber \\
		 & \quad + \underbrace{\beta\bigg[ \int_{\mathbb{T}^2} \diver [\vert \Arm(\vb(t))\vert^{2}\Arm(\mfrk_n(t)))]\cdot \A\mfrk_n(t)\drm x 
           + 2 \int_{\mathbb{T}^2} \diver[(\Arm(\mfrk_n(t)):\Arm(\vb(t)))\Arm(\vb(t))]\cdot \A\mfrk_n(t))\drm x\bigg]}_{\widetilde{M}_7},
   \end{align}
for a.e. $t\in[0,{T}]$. Let us estimate each term of right hand side of \eqref{lin-ub10} as follows:
\begin{align}
    |\widetilde{M}_1|& \leq  C \|\boldsymbol{\psi}\|^2_2 + C\|\A\mfrk_n\|^2_2,\label{lin-ub11}\\
    |\widetilde{M}_2|& = \left| \int_{\Tb^2}\mathfrak{D}^0\vb\cdot\mathfrak{D}^1\mfrk_n\cdot\mathfrak{D}^2\mfrk_n \drm x  \right|  \leq C \|\vb\|_4\|\mathfrak{D}^1\mfrk_n\|_4\|\mathfrak{D}^2\mfrk_n\|_2 \leq C \|\vb\|_{\dot{\Hb}^3}\|\A\mfrk_n\|_2^2,\label{lin-ub12}\\
    |\widetilde{M}_3|& = \left| \int_{\Tb^2}\mathfrak{D}^0\mfrk_n\cdot\mathfrak{D}^3\vb\cdot\mathfrak{D}^2\mfrk_n \drm x  \right|  \leq C \|\mfrk_n\|_{\infty}\|\mathfrak{D}^3\vb\|_2\|\mathfrak{D}^2\mfrk_n\|_2 \leq C \|\vb\|_{\dot{\Hb}^3}\|\A\mfrk_n\|_2^2,\label{lin-ub13}\\
    |\widetilde{M}_4|& = \left| \int_{\Tb^2}\mathfrak{D}^2\mfrk_n\cdot\mathfrak{D}^1\vb\cdot\mathfrak{D}^2\mfrk_n \drm x  \right|  \leq C \|\nabla\vb\|_{\infty}\|\mathfrak{D}^2\mfrk_n\|^2_2 \leq C \|\vb\|_{\dot{\Hb}^3}\|\A\mfrk_n\|_2^2,\label{lin-ub14}\\
    |\widetilde{M}_5|& = \left| \int_{\Tb^2}\mathfrak{D}^2\mfrk_n\cdot\mathfrak{D}^3\vb\cdot\mathfrak{D}^0\mfrk_n \drm x  \right|  \leq C  \|\mathfrak{D}^2\mfrk_n\|_2 
 \|\mathfrak{D}^3\vb\|_2\|\mfrk_n\|_{\infty} \leq C \|\vb\|_{\dot{\Hb}^3}\|\A\mfrk_n\|_2^2,\label{lin-ub15}\\
 |\widetilde{M}_6|& = \left| \int_{\Tb^2}\mathfrak{D}^1\vb\cdot\mathfrak{D}^2\mfrk_n\cdot\mathfrak{D}^2\mfrk_n \drm x + \int_{\Tb^2}\mathfrak{D}^2\vb\cdot\mathfrak{D}^1\mfrk_n\cdot\mathfrak{D}^2\mfrk_n \drm x  \right| 
 \nonumber\\ & \leq C \|\mathfrak{D}^1\vb\|_{\infty} \|\mathfrak{D}^2\mfrk_n\|^2_2  + C  \|\mathfrak{D}^2\vb\|_4 
 \|\mathfrak{D}^1\mfrk_n\|_4\|\mathfrak{D}^2\mfrk_n\|_{2} \leq C \|\vb\|_{\dot{\Hb}^3}\|\A\mfrk_n\|_2^2,\label{lin-ub16}\\
 |\widetilde{M}_7|& = \left| \int_{\Tb^2}\mathfrak{D}^1\vb\cdot\mathfrak{D}^2\vb\cdot\mathfrak{D}^1\mfrk_n\cdot\mathfrak{D}^2\mfrk_n \drm x + \int_{\Tb^2}\mathfrak{D}^1\vb\cdot\mathfrak{D}^1\vb\cdot\mathfrak{D}^2\mfrk_n\cdot\mathfrak{D}^2\mfrk_n \drm x  \right| 
 \nonumber\\ & \leq C \|\mathfrak{D}^1\vb\|_{\infty}\|\mathfrak{D}^2\vb\|_{4} \|\mathfrak{D}^1\mfrk_n\|_4\|\mathfrak{D}^2\mfrk_n\|_2  + C  \|\mathfrak{D}^1\vb\|^2_{\infty} 
 \|\mathfrak{D}^2\mfrk_n\|^2_{2} \leq C \|\vb\|_{\dot{\Hb}^3}^2\|\A\mfrk_n\|_2^2.\label{lin-ub17}
\end{align}
Combining \eqref{lin-ub11}-\eqref{lin-ub17}, we achieve
\begin{align}\label{lin-ub18}
  &  \frac{\drm}{\drm t}\bigg[\|\nabla\mfrk_n(t)\|_2^2+\alpha_1 \|\A\mfrk_n(t)\|_2^2\bigg] \leq C \|\boldsymbol{\psi}(t)\|^2_2+ C \{1+ \|\vb(t)\|^2_{\dot{\Hb}^3}\}\bigg[\|\nabla\mfrk_n(t)\|_2^2+\alpha_1 \|\A\mfrk_n(t)\|_2^2\bigg],
   \end{align}
   for a.e. $t\in[0,T]$, and the Gronwall inequality implies
\begin{align}\label{lin-ub19}
\|\nabla\mfrk_n(t)\|_2^2+\alpha_1 \|\A\mfrk_n(t)\|_2^2  \leq  C  e^{Ct + C \int_0^t \|\vb(s)\|^2_{\dot{\Hb}^3}\drm s}  \int_0^t \|\boldsymbol{\psi}(s)\|^2_2 \drm s,
 \end{align}
for all $t\in[0,{T}]$. Using the fact that 
        $\boldsymbol{\psi} \in\mathrm{L}^{2}(0,{T};\dot{\Lb}^2(\mathbb{T}^2))$ and $\vb\in \mathrm{L}^{\infty}(0, {T};\dot{\Hb}^3(\mathbb{T}^2))$, we have from \eqref{lin-ub19} that
		\begin{align}\label{lin-ub20}
			\{\mfrk_n\}_{n\in\N} \text{ is a bounded sequence in }\mathrm{L}^{\infty}(0,{T};\Drm(\A)).
		\end{align} 
        
Taking the inner product with $\partial_t\mfrk_n$ to $\eqref{finite-dimS-linerarized}_1$ and using integration by parts, we obtain 
\begin{align}\label{lin-ub21}
  & \|\partial_t\mfrk_n(t)\|_2^2 + \alpha_1 \|\partial_t\nabla\mfrk_n(t)\|^2_2 +  \frac{\nu}{2} \frac{\drm}{\drm t} \|\nabla\mfrk_n(t)\|_2^2 
   \nonumber\\ &=   \underbrace{(\boldsymbol{\psi}(t),\partial_t\mfrk_n(t))}_{\widehat{M}_1} + \underbrace{b(\vb(t),  \partial_t\mfrk_n(t), \Upsilon(\mfrk_n(t)))}_{\widehat{M}_2} - \underbrace{b(\mfrk_n(t), \Upsilon(\vb(t)), \partial_t\mfrk_n(t))}_{\widehat{M}_3} - \underbrace{b(\partial_t\mfrk_n(t), \vb(t), \Upsilon(\mfrk_n(t)))}_{\widehat{M}_4}
   \nonumber\\ & \quad 
   + \underbrace{b(\partial_t\mfrk_n(t), \Upsilon(\vb(t)) , \mfrk_n(t)) }_{\widehat{M}_4}
    - \underbrace{(\alpha_1+\alpha_2) \big(\Arm(\vb(t)) \Arm(\mfrk_n(t))+ \Arm(\mfrk_n(t))\Arm(\vb(t)), \partial_t\nabla\mfrk_n(t)\big)}_{\widehat{M}_5} 
    \nonumber\\ & \quad  - \underbrace{\beta \big( \vert \Arm(\vb(t))\vert^{2}\Arm(\mfrk_n(t)), \partial_t\nabla\mfrk_n(t)\big) }_{\widehat{M}_6}
         - \underbrace{2\beta   \big( (\Arm(\mfrk_n(t)):\Arm(\vb(t)))(\Arm(\vb(t)), \partial_t\nabla\mfrk_n(t))\big)}_{\widehat{M}_7} ,
   \end{align}
   for a.e. $t\in[0,{T}]$. Let us estimate the terms on the right hand side of \eqref{lin-ub21} using H\"older's, Sobolev's and Young's inequalities as follows:
\begin{align}
    |\widehat{M}_1|& \leq  C \|\boldsymbol{\psi}\|^2_2 + \frac{1}{6}\|\partial_t \mfrk\|^2_2,\label{lin-ub22}\\
    |\widehat{M}_2|& \leq C \|\vb\|_{\infty}\|\partial_t \nabla\mfrk_n\|_2\|\mathfrak{D}^2\mfrk_n\|_2 \leq C \|\vb\|_{\dot{\Hb}^3}\|\partial_t \nabla\mfrk_n\|_2\|\A\mfrk_n\|_2 
    \leq  C \|\vb\|_{\dot{\Hb}^3}^2\|\A\mfrk_n\|_2^2 + \frac{1}{6}\|\partial_t\nabla \mfrk_n\|^2_2,\label{lin-ub23}\\
    |\widehat{M}_3|& \leq C \|\mathfrak{D}^3\vb\|_{2}\|\partial_t \mfrk_n\|_2\|\mfrk_n\|_{\infty} \leq C \|\vb\|_{\dot{\Hb}^3}\|\partial_t \mfrk_n\|_2\|\A\mfrk_n\|_2 
    \leq  C \|\vb\|_{\dot{\Hb}^3}^2\|\A\mfrk_n\|_2^2 + \frac{1}{6}\|\partial_t \mfrk_n\|^2_2,\label{lin-ub24}\\
    |\widehat{M}_4|& \leq C \|\nabla\vb\|_{\infty}\|\partial_t \mfrk_n\|_2\|\mathfrak{D}^2\mfrk_n\|_{2} \leq C \|\vb\|_{\dot{\Hb}^3}\|\partial_t \mfrk_n\|_2\|\A\mfrk_n\|_2 
    \leq  C \|\vb\|_{\dot{\Hb}^3}^2\|\A\mfrk_n\|_2^2 + \frac{1}{6}\|\partial_t \mfrk_n\|^2_2,\label{lin-ub25}\\
    |\widehat{M}_5|& \leq C \|\nabla\vb\|_{\infty}\|\partial_t\nabla \mfrk_n\|_2\|\nabla\mfrk_n\|_{2} \leq C \|\vb\|_{\dot{\Hb}^3}\|\partial_t\nabla \mfrk_n\|_2\|\A\mfrk_n\|_2 
    \leq  C \|\vb\|_{\dot{\Hb}^3}^2\|\A\mfrk_n\|_2^2 + \frac{1}{6}\|\partial_t\nabla \mfrk_n\|^2_2,\label{lin-ub26}\\
    |\widehat{M}_6+\widehat{M}_7|& \leq C \|\nabla\vb\|^2_{\infty}\|\partial_t\nabla \mfrk_n\|_2\|\nabla\mfrk_n\|_{2} \leq C \|\vb\|^2_{\dot{\Hb}^3}\|\partial_t\nabla \mfrk_n\|_2\|\A\mfrk_n\|_2 
    \leq  C \|\vb\|_{\dot{\Hb}^3}^4\|\A\mfrk_n\|_2^2 + \frac{1}{6}\|\partial_t\nabla \mfrk_n\|^2_2.\label{lin-ub27}
\end{align}
Combining \eqref{lin-ub21}-\eqref{lin-ub27}, we have 
\begin{align}\label{lin-ub28}
    & \|\partial_t\mfrk_n(t)\|_2^2 + \alpha_1 \|\partial_t\nabla\mfrk_n(t)\|^2_2 +  \nu \frac{\drm}{\drm t} \|\nabla\mfrk_n(t)\|_2^2 
        \leq  C\|\boldsymbol{\psi}(t)\|^2_2 + C [1 + \|\vb(t)\|_{\dot{\Hb}^3}^4]\|\A\mfrk_n(t)\|_2^2,
\end{align}
for a.e. $t\in[0,{T}]$, which due to $\mfrk_n(0)=\boldsymbol{0}$ implies 
\begin{align}\label{lin-ub29}
  \int_0^t     \bigg[ \|\partial_t\mfrk_n(s)\|_2^2 + \alpha_1 \|\partial_t\nabla\mfrk_n(s)\|^2_2 \bigg] \drm s \leq    C \int_0^t \|\boldsymbol{\psi}(s)\|^2_2 + [1 + \|\vb(s)\|_{\dot{\Hb}^3}^4]\|\A\mfrk_n(s)\|_2^2 \drm s,
 \end{align}
for all $t\in[0,{T}]$. Using the fact that 
        $\boldsymbol{\psi} \in\mathrm{L}^{2}(0,{T};\dot{\Lb}^2(\mathbb{T}^2))$, $\vb\in \mathrm{L}^{\infty}(0,{T};\dot{\Hb}^3(\Tb^2))$ and $\{\mfrk_n\}_{n\in\N}$ is a bounded sequence in $\mathrm{L}^{\infty}(0,T;\Drm(\A))$ (see \eqref{lin-ub20}), we have from \eqref{lin-ub29} that
		\begin{align}\label{lin-ub30}
			\{\partial_t\mfrk_n\}_{n\in\N} \text{ is a bounded sequence in } \mathrm{L}^{2}(0,{T};\Vb).
		\end{align}
	Note that, similar to \eqref{eqn-I+A_i_u}, in view of \eqref{lin-ub30}, we get
	\begin{align}\label{lin-ub31}
		 \{\partial_t\Upsilon(\mfrk_n)\}_{n\in\N} \text{ is a bounded sequence in }\mathrm{L}^{2}(0,{T};\Vb^{\prime}).
	\end{align}

\vskip 2mm
	\noindent
	\textbf{Step III.} \textit{Weak and strong limits, and passing $n\to \infty$ in system \eqref{finite-dimS-linerarized}.}  Using \eqref{lin-ub20}, \eqref{lin-ub31}, the \textit{Banach-Alaoglu theorem} and \textit{Aubin-Lions compactness lemma} (similar to \eqref{Strong-Convergence-v}), we infer the existence of an element $\mfrk \in\mathrm{L}^{\infty}(0,{T};\Drm(\A))$ with $\partial_t (\Irm+\alpha_1\A) \mfrk \in \mathrm{L}^{2}(0,{T};\Vb^{\prime})$ such that
\begin{equation}\label{lin-ub32}
		\left.    \begin{aligned}
			\mfrk_n\xrightharpoonup{w^*}&\ \mfrk &&\text{ in }\ \ \ \ \	\mathrm{L}^{\infty}(0,{T};\Drm(\A)),\\
			\partial_t \Upsilon(\mfrk_n) \xrightharpoonup{w}&\;  \partial_t \Upsilon (\mfrk)   && \text{ in }  \ \ \ \ \ \mathrm{L}^{2}(0,{T};\Vb^{\prime}),
			\\
			\mfrk_n \to & \ \mfrk && \text{ in }\ \ \ \ \	\mathrm{L}^{2}(0,{T};\Vb),
		\end{aligned}\right\}
	\end{equation}
    along a subsequence (still denoted by the same symbol). Making use of weak and strong convergence \eqref{lin-ub32}, $\vb\in\mathrm{L}^{\infty}(0,{T};\Drm(\A^{\frac{3}{2}}))$ and strong convergence $\Prm_n\Pcal\boldsymbol{\psi}\to \Pcal\boldsymbol{\psi} $ in $\mathrm{L}^{2}(0,{T};\Hb))$, one can easily pass the limit $n\to\infty$ in \eqref{finite-dimS-linerarized} and obtain
\begin{align}
 	\partial_t \Upsilon (\mfrk) &=  \Pcal\boldsymbol{\psi} - \nu\A \mfrk - \Pcal[(\vb\cdot\nabla)\Upsilon (\mfrk)] - \Pcal[(\mfrk\cdot \nabla) \Upsilon (\vb)] 
          - \Pcal\bigg[\displaystyle\sum_{j=1}^2[\Upsilon (\mfrk)]^j\nabla \vb^j \bigg] 
        -  \Pcal\bigg[\displaystyle\sum_{j=1}^2[\Upsilon (\mfrk)]^j\nabla (\mfrk)^j\bigg] 
         \nonumber \\
         & \quad
        + (\alpha_1+\alpha_2) \Pcal [\mathrm{div}(\Arm(\vb) \Arm(\mfrk)+ \Arm(\mfrk)\Arm(\vb))]  +\beta \Pcal [[\mathrm{div}(\vert \Arm(\vb)\vert^{2}\Arm(\mfrk))]]+2\beta \Pcal [{\rm div}[(\Arm(\mfrk):\Arm(\vb))\Arm(\vb)]],
 \end{align}
 in  $\mathrm{L}^{2}(0,{T};\Vb^{\prime})$. Next, since $(\Irm + \alpha_1\A)^{\frac12}\mfrk\in\mathrm{L}^{2}(0,{T};\Vb)$ and $\partial_t(\Irm+\alpha_1\A)^{\frac12}\mfrk \in \mathrm{L}^{2}(0, {T};\Vb')$, it implies from \cite[Lemma 1.2, p.176]{temam2001navier} that $(\Irm+\alpha_1\A)^{\frac12}\mfrk\in \mathrm{C}([0, {T}];\Hb)$, the real-valued function $t\mapsto\|(\Irm+\alpha_1\A)^{\frac12}\mfrk(t)\|_{2}^2$ is absolutely continuous and the following equality is satisfied:
	\begin{align}
		\frac12\frac{\drm}{\drm t}\|(\Irm+\alpha_1\A)^{\frac12}\mfrk(t)\|_{2}^2 =  \left<\partial_t (\Irm+\alpha\A)^{\frac12}\mfrk(t),(\Irm+\alpha_1\A)^{\frac12}\mfrk(t)  \right>,\  \ \ \ \text{ for a.e. } t\in[0, {T}].
	\end{align}
Also, the initial condition in the Definition \ref{def-LSTGF} is well defined. 

    \vskip 2mm
	\noindent
	\textbf{Step IV.}
	\textit{Uniqueness:}
	Define $\mathfrak{M}:=\mfrk_1-\mfrk_2$, where $\mfrk_1$ and $\mfrk_2$ are two solutions of system \eqref{eqn-linearized} in the sense of Definition \ref{def-LSTGF}. Since, system \eqref{eqn-linearized} is linear in $\mfrk$, therefore $\mathfrak{M}$ is also a solution of system \eqref{eqn-linearized} replacing $\boldsymbol{\psi}$ with $\boldsymbol{0}$ and it satisfies
	\begin{align}\label{eeq-L-Uni}
		&\|\mathfrak{M}(t)\|_{2}^2+ \alpha_1 \|\nabla\mathfrak{M}(t)\|_{2}^2 + 2 \nu\int_0^t\|\nabla\mathfrak{M}(s)\|_{2}^2\drm s + \beta \int_0^t\||\Arm(\vb(s))|\Arm(\mathfrak{M}(s))\|^2_2 \drm s 
	+ 2 \beta \int_0^t\int_{\mathbb{T}^2}\left[\Arm(\vb(s)):\Arm(\mathfrak{M}(s))\right]^2\drm x \drm s    
		\nonumber\\&= - 2 \int_0^t [ \alpha_1 b ( \vb(t), \mathfrak{M}(t), \Delta \mathfrak{M}(t)) +  b ( \mathfrak{M}(t), \vb(t), \mathfrak{M} (t)) - \alpha_1  b (\mathfrak{M} (t), \vb(t), \Delta \mathfrak{M}(t)) ] \drm s
		\nonumber\\ & \quad +  (\alpha_1+\alpha_2) \int_0^t\int_{\mathbb{T}^2}[\Arm(\vb(s))\Arm(\mathfrak{M}(s))+ \Arm(\mathfrak{M}(s))\Arm(\vb(s))]:\Arm(\mathfrak{M}(s))\drm x \drm s ,
	\end{align}
	for all $t\in[0,{T}]$. A
	reasoning similar to that used to obtain \eqref{lin-ub7} and $\mathfrak{M}(0)=\textbf{0}$ yield
	\begin{align}
		&\frac{\drm}{\drm t}\bigg[\|\mathfrak{M}(t)\|_2^2+\alpha_1 \|\nabla\mathfrak{M}(t)\|_2^2\bigg] \leq  C \{1+ \|\vb(t)\|_{\dot{\Hb}^3}\}\bigg[\|\mathfrak{M}(t)\|_2^2+\alpha_1 \|\nabla\mathfrak{M}(t)\|_2^2\bigg],
	\end{align}
	for all $t\in[0,{T}]$. Hence, an application of Gronwall's inequality and the fact that $\vb\in \mathrm{L}^{\infty}(0,{T};\Drm(\A^{\frac{3}{2}}))$   give $\mfrk_1(t)=\mfrk_2(t)$ in $\Vb$, for all $t\in[0,{T}]$.
\vskip 2mm
	\noindent
	\textbf{Step V.}
	\textit{Moments:}    Finally we show that $\mfrk\in \Lrm^2(\Omega; \mathrm{L}^{\infty}(0,T; \Drm(\A)))$. 
Taking supremum over $t\in[0,T]$ and then expectation of \eqref{lin-ub19}, we find for $p>2$
    \begin{align}\label{Moment-2}
& \Eb\bigg[\sup_{t\in[0,T]}\|\nabla\mfrk_n(t)\|_2^2+\alpha_1 \sup_{t\in[0,T]} \|\A\mfrk_n(t)\|_2^2 \bigg] 
\nonumber\\ & \leq  C \Eb \left[\exp\left\{CT + C \int_0^T \|\vb(s)\|^2_{\dot{\Hb}^3}\drm s\right\} \cdot \|\boldsymbol{\psi}(s)\|^2_{\Lrm^2(0,T;\dot{\Lb}^2)}\right]
\nonumber\\ & \leq  C \left\{\Eb \left[\exp\left\{\frac{pC}{p-2}(T +  \int_0^T \|\vb(s)\|^2_{\dot{\Hb}^3}\drm s)\right\} \right]\right\}^{\frac{p-2}{p}}\cdot  \|\boldsymbol{\psi}(s)\|^2_{\Lrm^p(\Omega;\Lrm^2(0,T;\dot{\Lb}^2))}
  <+\infty,
 \end{align}
 where we have used \eqref{eqn-vb-exponential-moments} and the fact that $\boldsymbol{\psi} \in \Lrm^p(\Omega; \mathrm{L}^2(0,T; \dot{\Lb}^2(\mathbb{T}^2)))$ (for some $p>2$). Therefore, by Banach-Alaoglu theorem, and uniqueness of weak and weak* limits, we have (along a subsequence)
 \begin{equation}\label{Moment-3}
        \begin{aligned}
			\mfrk_n\xrightharpoonup{w}&\ \mfrk &&\text{ in }\ \ \ \ \	\Lrm^2(\Omega;\mathrm{L}^{2}(0,{T};\Drm(\A))).
		\end{aligned}
	\end{equation}
This completes the proof of Theorem \ref{solution-L}.
\end{proof}


\section{G\^ateaux differentiability of the control-to-state mapping}\label{Sec-Gateaux-differentiability}\setcounter{equation}{0}

This section analyzes the differentiability of the control-to-state mapping. Specifically, we demonstrate that the solution to the linearized equation corresponds to the G\^ateaux derivative of the control-to-state mapping. In order to prove such result, we need the following stability result:

\begin{proposition}
    Let $\vb_1,\vb_2$ be two solutions to system \eqref{STGF-Projected} in the sense of Definition \ref{def-Solution} satisfying \eqref{eqn-vb-exponential-moments} associated with external forcing $\fb_1,\fb_2\in \Lrm^2(\Omega;\mathrm{L}^2(0,T;\dot{\Hb}^{1}(\mathbb{T}^2)))$ and same initial data $\vb_{0}\in \Drm(\A^{\frac{3}{2}})$, respectively. Then, there exists a constant $K$ depending only on the given data $\nu$, $\alpha_1$, $\alpha_2$ and $\beta$ such that 
    \begin{align}\label{Stability-inequality}
    \sup_{t\in[0,T]}\|\A\vfrk(t)\|_{2}^2 
    & \leq C e^{C\int_0^T [ 1+ \|\vb_1(t)\|^2_{\dot{\Hb}^3}+\|\vb_2(t)\|^2_{\dot{\Hb}^3}]\drm s} \|\fb_1-\fb_2\|^2_{\Lrm^2(0,T;\dot{\Lb}^2)}, \;\; \; \Pb\text{-a.s.}
\end{align}
\end{proposition}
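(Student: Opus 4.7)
Since the two solutions share the common additive noise $\Wrm$ and the common initial datum $\vb_0$, the stochastic integral cancels upon subtraction and $\vfrk:=\vb_1-\vb_2$ solves, pathwise on $\widetilde{\Omega}$, the deterministic PDE
\begin{align*}
\partial_t \Upsilon(\vfrk) &+ \nu \A\vfrk + \Pcal[(\vfrk\cdot\nabla)\Upsilon(\vb_1)] + \Pcal[(\vb_2\cdot\nabla)\Upsilon(\vfrk)] + \Pcal\bigg[\sum_{j=1}^2 [\Upsilon(\vfrk)]^j\nabla\vb_1^j + [\Upsilon(\vb_2)]^j\nabla\vfrk^j\bigg] \\
&- (\alpha_1+\alpha_2)[\Jcal(\vb_1)-\Jcal(\vb_2)] - \beta[\Kcal(\vb_1)-\Kcal(\vb_2)] = \Pcal(\fb_1-\fb_2),
\end{align*}
with $\vfrk(0)=\boldsymbol{0}$. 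The whole argument is therefore pathwise.

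The plan is to take the $\Hb$-inner product of this equation with $\A\vfrk$, which is exactly the test used in Step~II of the proof of Theorem \ref{solution-L}, but with $\vfrk$ playing the role of $\mfrk_n$ and the coefficients built from both $\vb_1,\vb_2$. The left-hand side produces
\[
\tfrac{1}{2}\tfrac{\drm}{\drm t}\bigl(\|\nabla\vfrk\|_2^2 + \alpha_1\|\A\vfrk\|_2^2\bigr) + \nu\|\A\vfrk\|_2^2,
\]
together with non-negative coercive contributions arising from the cubic difference $\Kcal(\vb_1)-\Kcal(\vb_2)$ (of the form $\tfrac{\beta}{2}\int |\Arm(\vfrk)|^2(|\Arm(\vb_1)|^2+|\Arm(\vb_2)|^2)$ and $\tfrac{\beta}{2}\int[|\Arm(\vb_1)|^2-|\Arm(\vb_2)|^2]^2$), which I will either drop or use to absorb bad cross terms. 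Each nonlinear difference is split into a factor linear in $\vfrk$ times a factor depending only on $\vb_1$ or $\vb_2$; the integrations by parts are arranged exactly as in the estimates \eqref{lin-ub11}--\eqref{lin-ub17}, so that no integrand contains a third derivative of $\vfrk$. Using $\dot{\Hb}^3(\Tb^2)\embed \Wb^{1,\infty}(\Tb^2)$, H\"older, Gagliardo--Nirenberg in two dimensions and Young, every such piece is bounded by
\[
C\bigl(1+\|\vb_1\|_{\dot{\Hb}^3}^2+\|\vb_2\|_{\dot{\Hb}^3}^2\bigr)\bigl(\|\nabla\vfrk\|_2^2 + \alpha_1\|\A\vfrk\|_2^2\bigr),
\]
while the forcing contribution $(\Pcal(\fb_1-\fb_2),\A\vfrk)$ is absorbed into $\tfrac{\nu}{2}\|\A\vfrk\|_2^2 + C\|\fb_1-\fb_2\|_2^2$.

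Collecting these bounds produces the differential inequality
\[
\tfrac{\drm}{\drm t}\bigl(\|\nabla\vfrk(t)\|_2^2 + \alpha_1\|\A\vfrk(t)\|_2^2\bigr) \leq C\|\fb_1(t)-\fb_2(t)\|_2^2 + C\bigl(1+\|\vb_1(t)\|_{\dot{\Hb}^3}^2+\|\vb_2(t)\|_{\dot{\Hb}^3}^2\bigr)\bigl(\|\nabla\vfrk(t)\|_2^2 + \alpha_1\|\A\vfrk(t)\|_2^2\bigr),
\]
with $\vfrk(0)=\boldsymbol{0}$. Gronwall's lemma and taking the supremum in $t\in[0,T]$ then yield \eqref{Stability-inequality}; pathwise finiteness of the exponential factor is guaranteed by the exponential integrability \eqref{eqn-vb-exponential-moments}. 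The main technical obstacle I anticipate is the treatment of $\Kcal(\vb_1)-\Kcal(\vb_2)$ tested against $\A\vfrk$: after the algebraic expansion analogous to \eqref{213-K}, the integration by parts must be arranged so that only factors built from $\vb_1,\vb_2$ (whose $\dot{\Hb}^3$-norms are pathwise bounded and whose first two derivatives can be estimated in $L^\infty$, $L^4$) carry the extra spatial derivative, and never $\A\vfrk$ itself; otherwise a term controlled only by $\|\vfrk\|_{\dot{\Hb}^3}$ would appear and the Gronwall argument at the $\Drm(\A)$-energy level would not close.
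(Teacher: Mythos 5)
Your proposal is correct and follows essentially the same route as the paper: cancel the common noise and initial datum, derive a pathwise equation for $\vfrk=\vb_1-\vb_2$, perform a second-order energy estimate whose nonlinear terms are bounded by $C\bigl(1+\|\vb_1\|^2_{\dot{\Hb}^3}+\|\vb_2\|^2_{\dot{\Hb}^3}\bigr)$ times the energy, and close with Gronwall, the exponential factor being finite $\Pb$-a.s.\ by \eqref{eqn-vb-exponential-moments}. The only (immaterial) deviation is the test function: the paper pairs the difference equation with $\Upsilon(\vfrk)=\vfrk+\alpha_1\A\vfrk$ and controls $\|\Upsilon(\vfrk)\|_2^2$, which is where the nonnegative cubic contributions $\tfrac{\beta}{2}\int|\Arm(\vfrk)|^2(|\Arm(\vb_1)|^2+|\Arm(\vb_2)|^2)$ actually arise (from the $\vfrk$-part of the pairing, not from testing with $\A\vfrk$ as you suggest); since both you and the paper merely drop these terms and the two energies are equivalent, your $\A\vfrk$-test closes identically via the estimates of Step~II of the proof of Theorem \ref{solution-L}.
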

\begin{proof}
Defining  $\vfrk:=\vb_1-\vb_2$, then the following equation holds 
\begin{equation}\label{Stability}
	\left\{
	\begin{aligned}
		\partial_t \Upsilon(\mathfrak{v}(t)) &= -\left[\mathcal{G}(\vb_1(t), \boldsymbol{0})-\mathcal{G}(\vb_2(t), \boldsymbol{0})\right] + \fb_1-\fb_2, \\
		\mathfrak{v}(0)&= \vb_{0,1}-\vb_{0,2},
	\end{aligned}
	\right.
\end{equation}
for a.e. $t\in[0,T]$ and $\Pb$-a.s. in $\Vb'$, where $\mathcal{G}(\cdot,\cdot)$ is given by \eqref{eqn-mathcal-G}. In view of $\Upsilon(\vb_1-\zb)$, $\Upsilon(\vb_2-\zb)\in \mathrm{L}^2(0,T;\Vb)$ (see \eqref{H3-regularity}) and $\partial_t\Upsilon(\vb_1-\zb)$, $\partial_t\Upsilon(\vb_1-\zb)\in \mathrm{L}^{2}(0,T;\Vb')$ (see \eqref{EE-ubt7}), we infer that  $\Upsilon(\vfrk) \in \mathrm{L}^2(0,T;\Vb)$ and $\partial_t\Upsilon(\vfrk)\in \mathrm{L}^{2}(0,T;\Vb')$. Therefore, an application of \cite[Lemma 1.2, p.176]{temam2001navier} implies $\Upsilon(\vfrk) \in \mathrm{C}([0,T];\Hb)$, the real-valued function $t\mapsto\|\Upsilon(\mathfrak{v}(t))\|_{2}^2$ is absolutely continuous and the following equality is satisfied:
\begin{align*}
    \frac12\frac{\drm}{\drm t}\|\Upsilon(\mathfrak{v}(t))\|_{2}^2 =  \left<\partial_t \Upsilon(\mathfrak{v}(t)),\Upsilon(\mathfrak{v}(t))  \right>,\  \ \ \ \text{ for a.e. } t\in[0,T].
\end{align*}
or equivalently
\begin{align}\label{Stability-1}
   & \frac12 \frac{\drm}{\drm t}\bigg[\|\vfrk(t)\|_{2}^2 + 2\alpha_1\|\nabla\vfrk(t)\|_{2}^2+ \alpha_1^2\|\A\vfrk(t)\|_{2}^2\bigg]
          = - \left<\mathcal{G}(\vb_1(t), \boldsymbol{0})-\mathcal{G}(\vb_2(t), \boldsymbol{0}),\Upsilon(\mathfrak{v}(t)) \right> + \underbrace{\left< \fb_1(t)-\fb_2(t),\Upsilon(\mathfrak{v}(t)) \right>}_{=:V_0},
\end{align}
 for a.e.  $t\in[0,T]$, $\Pb$-a.s. Our next aim is to estimate the right hand side of \eqref{Stability-1} appropriately. Using integration by parts, \eqref{b0}, \eqref{Au-orthogonal}, we infer
\begin{align}\label{Stability-2}
    & - \left< \mathcal{G}(\vb_1, \boldsymbol{0})-\mathcal{G}(\vb_2, \boldsymbol{0}),\Upsilon(\mathfrak{v}) \right>
    \nonumber\\ & = -\nu\|\nabla\vfrk\|_2^2 -\nu\alpha_1\|\A\vfrk\|_2^2 -  \frac{\beta}{2}\int_{\mathbb{T}^2} |\Arm(\vfrk(x))|^2( |\Arm(\vb_1(x))|^2+|\Arm(\vb_2(x))|^2)\drm x 
      \nonumber\\ & \quad +\underbrace{\beta \alpha_1  \left< \diver\{[\Arm(\vfrk):\Arm(\vb_1)]\Arm(\vb_1)+[\Arm(\vfrk):\Arm(\vb_2)]\Arm(\vb_1) + 
      |\Arm(\vb_2)|^2\Arm(\vfrk) \},\A\vfrk \right>}_{=:V_1}
      \nonumber\\ & \quad -  \underbrace{[b(\vfrk,\Upsilon(\vb_1), \Upsilon(\mathfrak{v})) - b(\Upsilon(\mathfrak{v}),\Upsilon(\vb_1) , \vfrk )+ b(\Upsilon(\mathfrak{v}), \vb_2, \Upsilon(\mathfrak{v}))  ]}_{=:V_2}
   - \underbrace{(\alpha_1+\alpha_2) \left<\diver[\Arm(\vb_1)\Arm(\vfrk)+\Arm(\vfrk)\Arm(\vb_2)],\Upsilon(\mathfrak{v}) \right>  }_{=:V_3}.
\end{align}
Using H\"older's, Sobolov's and Young's inequalities, we have 
\begin{align}
    |V_1|& =\bigg|\int_{\Tb^2}\mathfrak{D}^{2}\vfrk\cdot\mathfrak{D}^{1}\vb_1\cdot\mathfrak{D}^{1}\vb_1\cdot\mathfrak{D}^{2}\vfrk\drm x + \int_{\Tb^2}\mathfrak{D}^{1}\vfrk\cdot\mathfrak{D}^{2}\vb_1\cdot \mathfrak{D}^{1}\vb_1\cdot\mathfrak{D}^{2}\vfrk\drm x + \int_{\Tb^2}\mathfrak{D}^{2}\vfrk \cdot \mathfrak{D}^{1}\vb_2 \cdot \mathfrak{D}^{1}\vb_1 \cdot \mathfrak{D}^{2}\vfrk\drm x 
    \nonumber\\ & \qquad + \int_{\Tb^2}\mathfrak{D}^{1}\vfrk \cdot \mathfrak{D}^{2}\vb_2 \cdot \mathfrak{D}^{1}\vb_1 \cdot \mathfrak{D}^{2} \vfrk\drm x+ \int_{\Tb^2}\mathfrak{D}^{1}\vfrk \cdot \mathfrak{D}^{1}\vb_2 \cdot \mathfrak{D}^{2}\vb_1 \cdot \mathfrak{D}^{2}\vfrk\drm x + \int_{\Tb^2}\mathfrak{D}^{2}\vfrk \cdot \mathfrak{D}^{1}\vb_2 \cdot \mathfrak{D}^{1}\vb_2 \cdot \mathfrak{D}^{2}\vfrk\drm x 
    \nonumber\\ & \qquad + \int_{\Tb^2}\mathfrak{D}^{1}\vfrk \cdot \mathfrak{D}^{2}\vb_2 \cdot \mathfrak{D}^{1}\vb_2 \cdot \mathfrak{D}^{2}\vfrk\drm x \bigg|
    \nonumber\\ & \leq C \|\mathfrak{D}^{1}\vb_1\|^2_{\infty} \|\mathfrak{D}^{2}\vfrk\|^2_2 + C \|\mathfrak{D}^{1}\vfrk\|_{4} \|\mathfrak{D}^{2}\vb_1\|_4 \|\mathfrak{D}^{1}\vb_1\|_{\infty} \|\mathfrak{D}^{2}\vfrk\|_2 +  C \|\mathfrak{D}^{1}\vb_2\|_{\infty}  \|\mathfrak{D}^{1}\vb_1\|_{\infty} \|\mathfrak{D}^{2}\vfrk\|^2_{2} 
    \nonumber\\ & \qquad +  C \|\mathfrak{D}^{1}\vfrk\|_4 \|\mathfrak{D}^{2}\vb_2\|_4 \|\mathfrak{D}^{1}\vb_1\|_{\infty} \|\mathfrak{D}^{2} \vfrk\|_2 + C \|\mathfrak{D}^{1}\vfrk\|_4 \|\mathfrak{D}^{1}\vb_2\|_{\infty} \|\mathfrak{D}^{2}\vb_1\|_4 \|\mathfrak{D}^{2}\vfrk\|_2 +  C\|\mathfrak{D}^{1}\vb_2\|_{\infty}^2 \|\mathfrak{D}^{2}\vfrk\|_2^2 
    \nonumber\\ & \qquad + C \|\mathfrak{D}^{1}\vfrk\|_4 \|\mathfrak{D}^{2}\vb_2\|_4 \|\mathfrak{D}^{1}\vb_2\|_{\infty} \|\mathfrak{D}^{2}\vfrk\|_2 
    \nonumber\\ & \leq C \big[\|\vb_1\|^2_{\dot{\Hb}^3}+\|\vb_2\|^2_{\dot{\Hb}^3}\big] \|\A\vfrk\|^2_2,\label{Stability-3} \\
    |V_2+V_3|& =\bigg|\int_{\Tb^2}\mathfrak{D}^{0}\vfrk\cdot\mathfrak{D}^{3}\vb_1\cdot\mathfrak{D}^{2}\vfrk\drm x + \int_{\Tb^2}\mathfrak{D}^{2}\vfrk\cdot\mathfrak{D}^{1}\vb_2\cdot\mathfrak{D}^{2}\vfrk\drm x +  \int_{\Tb^2}\mathfrak{D}^{2}\vb_1\cdot\mathfrak{D}^{1}\vfrk \cdot \mathfrak{D}^{2}\vfrk\drm x  +  \int_{\Tb^2}\mathfrak{D}^{1}\vb_1\cdot\mathfrak{D}^{2} \vfrk \cdot \mathfrak{D}^{2}\vfrk\drm x 
    \nonumber\\ & \qquad  +  \int_{\Tb^2}\mathfrak{D}^{1}\vfrk\cdot\mathfrak{D}^{2}\vb_2 \cdot \mathfrak{D}^{2}\vfrk\drm x  \bigg|
    \nonumber\\ & \leq C \|\vfrk\|_{\infty}\|\mathfrak{D}^{3}\vb_1\|_2\|\mathfrak{D}^{2}\vfrk\|_2 + C \|\mathfrak{D}^{1}\vb_2\|_{\infty} \|\mathfrak{D}^{2}\vfrk\|_2^2 +  C \|\mathfrak{D}^{2}\vb_1\|_4 \|\mathfrak{D}^{1}\vfrk\|_4 \|\mathfrak{D}^{2}\vfrk\|_2   +  C \|\mathfrak{D}^{1}\vb_1\|_{\infty} \|\mathfrak{D}^{2} \vfrk\|_2^2 
    \nonumber\\ & \qquad  +   C  \|\mathfrak{D}^{1}\vfrk\|_4 \|\mathfrak{D}^{2}\vb_1\|_4 \|\mathfrak{D}^{2}\vfrk\|_2 
    \nonumber\\ & \leq C \big[ 1+ \|\vb_1\|^2_{\dot{\Hb}^3}+\|\vb_2\|^2_{\dot{\Hb}^3}\big] \|\A\vfrk\|^2_2,\label{Stability-4}\\ 
    |V_0| & \leq  C\|\fb_1-\fb_2\|_2^2 +  C \|\A\vfrk\|^2_2.\label{Stability-5}
\end{align}
In view of \eqref{Stability-1}-\eqref{Stability-5}, we write 
\begin{align}\label{Stability-6}
   & \frac12 \frac{\drm}{\drm t}\bigg[\|\vfrk(t)\|_{2}^2 + 2\alpha_1\|\nabla\vfrk(t)\|_{2}^2+ \alpha_1^2\|\A\vfrk(t)\|_{2}^2\bigg]
   \nonumber\\ & \leq C\|\fb_1(t)-\fb_2(t)\|_2^2 +  C \big[ 1+ \|\vb_1(t)\|^2_{\dot{\Hb}^3}+\|\vb_2(t)\|^2_{\dot{\Hb}^3}\big] \bigg[\|\vfrk(t)\|_{2}^2 + 2\alpha_1\|\nabla\vfrk(t)\|_{2}^2+ \alpha_1^2\|\A\vfrk(t)\|_{2}^2\bigg],
\end{align}
for a.e.  $t\in[0,T]$ and $\Pb$-a.s. Hence, applying Gronwall's inequality, we complete the proof.
\end{proof}

Let us now prove the main result of this section in the following proposition.

\begin{proposition}\label{prop-GD}
Suppose that $\vb_{0}\in \Drm(\A^{\frac{3}{2}})$, $\boldsymbol{\psi}\in \Lrm^2(\Omega;\mathrm{L}^2(0,T;\dot{\Hb}^{1}(\mathbb{T}^2)))\cap\Lrm^p(\Omega;\mathrm{L}^2(0,T;\dot{\Lb}^{2}(\mathbb{T}^2)))$ (for some $p>4$) and $\fb \in \Lrm^2(\Omega;\mathrm{L}^2(0,T;\dot{\Hb}^{1}(\mathbb{T}^2)))$. Let $\vb$ and $\vb_{\rho}$ be two solutions of system \eqref{STGF-Projected} in the sense of Definition \ref{def-Solution} with same initial data $\vb_0$ and corresponding to external forcing $\fb$ and $\fb_{\rho}:=\fb+\rho \boldsymbol{\psi}$, $\rho\in(0,1)$, respectively, then 
\begin{align}
    \chi_{\rho}:=  \frac{\vb_\rho - \vb }{\rho} -\mfrk, \;\; \text{ satisfies }\;\; \lim_{\rho\to 0}\Eb\left[\sup_{t\in[0,T]}\bigg[\|\chi_{\rho}(t)\|_2^2+\alpha_1 \|\nabla\chi_{\rho}(t)\|_2^2\bigg]\right] = 0,
\end{align}
where $\mfrk$ is the solution to system \eqref{eqn-linearized} in the sense of Definition of \ref{def-LSTGF}.
\end{proposition}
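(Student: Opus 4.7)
The plan is to derive an evolution equation for $\chi_\rho$, perform a pathwise energy estimate in $\Vb$ patterned on the uniqueness argument in Step~II of Theorem~\ref{solution-L}, and then combine the stability estimate \eqref{Stability-inequality} with the exponential integrability \eqref{eqn-vb-exponential-moments} to pass to the limit $\rho\to 0$.

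First, set $\widetilde{\vb}_\rho := (\vb_\rho - \vb)/\rho$. Subtracting the projected state equations for $\vb_\rho$ and $\vb$ and dividing by $\rho$, each nonlinearity in \eqref{STGF-Projected} splits via an algebraic identity of the form $\tfrac{1}{\rho}[\mathcal{N}(\vb_\rho) - \mathcal{N}(\vb)] = \mathcal{N}'(\vb)\widetilde{\vb}_\rho + r_\rho$; for instance, for the bilinear part
\begin{equation*}
\tfrac{1}{\rho}\bigl[b(\vb_\rho,\cdot,\Upsilon(\vb_\rho)) - b(\vb,\cdot,\Upsilon(\vb))\bigr] = b(\widetilde{\vb}_\rho,\cdot,\Upsilon(\vb)) + b(\vb,\cdot,\Upsilon(\widetilde{\vb}_\rho)) + b(\widetilde{\vb}_\rho,\cdot,\Upsilon(\vb_\rho - \vb)),
\end{equation*}
and analogous expansions hold for $\Jcal$ and $\Kcal$. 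Subtracting the linearized equation \eqref{eqn-linearized} for $\mfrk$ (whose source $\boldsymbol{\psi}$ cancels the one produced by $(\fb_\rho-\fb)/\rho$), one obtains
\begin{equation*}
\partial_t \Upsilon(\chi_\rho) + \mathcal{L}_\vb\chi_\rho = \mathcal{R}_\rho,\qquad \chi_\rho(0)=\boldsymbol{0},
\end{equation*}
where $\mathcal{L}_\vb$ is the linear operator on the right-hand side of \eqref{eqn-linearized} applied to $\chi_\rho$, and $\mathcal{R}_\rho$ collects all $r_\rho$-type remainders, each of which carries a factor $\vb_\rho-\vb = \rho\widetilde{\vb}_\rho$ (e.g.\ $b(\widetilde{\vb}_\rho,\cdot,\Upsilon(\vb_\rho-\vb))$, $\Pcal\diver[\Arm(\widetilde{\vb}_\rho)\Arm(\vb_\rho-\vb)]$ and analogous cubic contributions from $\Kcal$).

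Testing this equation with $\chi_\rho$ and repeating verbatim the computations of Step~II in the proof of Theorem~\ref{solution-L} (which are valid since the linear part is identical), one obtains pathwise
\begin{equation*}
\frac{\drm}{\drm t}\bigl[\|\chi_\rho\|_2^2 + \alpha_1\|\nabla\chi_\rho\|_2^2\bigr] \leq C\bigl(1+\|\vb\|^2_{\dot{\Hb}^3}\bigr)\bigl[\|\chi_\rho\|_2^2 + \alpha_1\|\nabla\chi_\rho\|_2^2\bigr] + \bigl|\langle \mathcal{R}_\rho, \chi_\rho\rangle\bigr|.
\end{equation*}
Each term of $\langle \mathcal{R}_\rho, \chi_\rho\rangle$ is estimated by H\"older's and Sobolev's inequalities so as to peel off one factor of $\vb_\rho-\vb$; typically,
\begin{equation*}
\bigl|\bigl\langle \Pcal\diver[\Arm(\widetilde{\vb}_\rho)\Arm(\vb_\rho-\vb)],\chi_\rho\bigr\rangle\bigr| \leq C\,\|\A\widetilde{\vb}_\rho\|_2\,\|\A(\vb_\rho-\vb)\|_2\,\|\nabla\chi_\rho\|_2,
\end{equation*}
and the cubic $\Kcal$-contribution is controlled using $\|\Arm(\vb)\|_\infty + \|\Arm(\vb_\rho)\|_\infty \leq C(\|\vb\|_{\dot{\Hb}^3}+\|\vb_\rho\|_{\dot{\Hb}^3})$. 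The stability inequality \eqref{Stability-inequality} with $\fb_1 = \fb+\rho\boldsymbol{\psi}$ and $\fb_2 = \fb$ yields $\sup_t\|\A(\vb_\rho-\vb)\|_2 \leq C\rho\,\mathcal{E}(\omega)\|\boldsymbol{\psi}\|_{\Lrm^2(0,T;\dot{\Lb}^2)}$ pathwise, with $\mathcal{E}(\omega):=\exp\bigl(C\!\int_0^T(1+\|\vb\|^2_{\dot{\Hb}^3}+\|\vb_\rho\|^2_{\dot{\Hb}^3})\drm s\bigr)$, and in particular $\sup_t\|\A\widetilde{\vb}_\rho\|_2 \leq C\mathcal{E}(\omega)\|\boldsymbol{\psi}\|_{\Lrm^2(0,T;\dot{\Lb}^2)}$ uniformly in $\rho$. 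Hence every contribution to $\mathcal{R}_\rho$ carries an explicit power $\rho$, and after Gronwall we obtain $\sup_t [\|\chi_\rho\|_2^2 + \alpha_1\|\nabla\chi_\rho\|_2^2] \leq C\rho^2\,\Phi(\omega)$ pathwise, where $\Phi$ is polynomial in $\sup_t\|\vb\|_{\dot{\Hb}^3}$, $\sup_t\|\vb_\rho\|_{\dot{\Hb}^3}$ and $\|\boldsymbol{\psi}\|_{\Lrm^2(0,T;\dot{\Lb}^2)}$, multiplied by $\mathcal{E}(\omega)^k$ for some finite $k$.

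Finally, taking expectation and using H\"older's inequality with the exponential integrability \eqref{eqn-vb-exponential-moments} applied to both $\vb$ and $\vb_\rho$ (the latter uniformly in $\rho\in(0,1)$, since $\fb + \rho\boldsymbol{\psi}$ stays in a fixed bounded subset of $\Lrm^2(\Omega;\Lrm^2(0,T;\dot{\Hb}^1))$, and the constant $\hat c$ in \eqref{eqn-vb-exponential-moments} is at our disposal via Proposition~\ref{thm_z_alpha}), together with the hypothesis $\boldsymbol{\psi}\in\Lrm^p(\Omega;\Lrm^2(0,T;\dot{\Lb}^2))$ for some $p>4$ to control the polynomial factors in $\Phi$, the right-hand side is bounded by $C\rho^2$, giving the claimed convergence. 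The main obstacle is the bookkeeping for the cubic $\Kcal$-remainder: its integrand is a sum of terms such as $|\Arm(\vb)|^2\Arm(\widetilde{\vb}_\rho)\Arm(\vb_\rho-\vb)$, and after integration against $\chi_\rho$ these become quartic integrals whose $\Lrm^1(\Omega)$-control requires simultaneously using the $\dot{\Hb}^3$-regularity of $\vb,\vb_\rho$ (to place $\Arm(\vb),\Arm(\vb_\rho)$ in $\Lrm^\infty$), the uniform-in-$\rho$ $\dot{\Hb}^2$-bound on $\widetilde{\vb}_\rho$ supplied by stability, and the freedom to choose $\hat c$ large enough in \eqref{eqn-vb-exponential-moments} to absorb the exponential factors arising from $\mathcal{E}(\omega)$.
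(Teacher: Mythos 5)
Your proposal is correct and follows essentially the same route as the paper: write the equation for $\chi_\rho$ with a remainder in which every term carries a factor $\vb_\rho-\vb=\rho\,(\vb_\rho-\vb)/\rho$, perform a pathwise $\Vb$-energy estimate whose linear part is handled exactly as in the linearized equation, invoke the stability inequality \eqref{Stability-inequality} to bound $\sup_t\|\A(\vb_\rho-\vb)/\rho\|_2$ uniformly in $\rho$, apply Gronwall to get a pathwise $O(\rho^2)$ bound, and conclude by H\"older together with the exponential integrability \eqref{eqn-vb-exponential-moments} and the assumption $\boldsymbol{\psi}\in\Lrm^p(\Omega;\Lrm^2(0,T;\dot{\Lb}^2))$, $p>4$. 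Your extra remark that the exponential moments for $\vb_\rho$ must be controlled uniformly in $\rho\in(0,1)$ is a sound refinement of a point the paper leaves implicit.
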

\begin{proof}
    Since $\vb_{\rho}$ and $\vb$ are the solution of system \eqref{STGF-Projected} corresponding to $\fb+\rho \boldsymbol{\psi}$ and $\fb$, respectively, and $\mfrk$ is the solution of system \eqref{eqn-linearized}, therefore $\chi_{\rho} :=\mfrk_\rho -\mfrk :=   \frac{\vb_\rho - \vb }{\rho} -\mfrk$ satisfies
    \begin{equation}\label{TGF-chi-rho}
	\left\{
	\begin{aligned}
\partial_t\Upsilon(\chi_{\rho})+\nu\A\chi_{\rho} 
        + \rho\Pcal[(\mfrk_{\rho}\cdot\nabla)\Upsilon(\mfrk_{\rho})] + \Pcal[(\vb\cdot\nabla)\Upsilon(\chi_{\rho})] + \Pcal[(\chi_{\rho}\cdot\nabla)\Upsilon(\vb)]
        \\ + \rho\Pcal\left[\sum_{j=1}^2 [\Upsilon(\mfrk_{\rho})]^j \nabla \mfrk_{\rho}^j \right]  + \Pcal\left[\sum_{j=1}^2\{ [\Upsilon(\vb)]^j\nabla\chi_{\rho}^j + [\Upsilon(\chi_{\rho})]^j\nabla \vb^j\}\right]
		- (\alpha_1+\alpha_2)\rho \mathcal{P}\diver[\Arm(\mfrk_\rho)\Arm(\mfrk_\rho)] 
	\\	- (\alpha_1+\alpha_2) \mathcal{P}\diver[\Arm(\chi_\rho)\Arm(\vb)+\Arm(\vb)\Arm(\chi_\rho)]
		-\beta\rho^2 \mathcal{P}\diver[|\Arm(\mfrk_\rho)|^2\Arm(\mfrk_\rho)] -\beta\rho \mathcal{P}\diver[|\Arm(\mfrk_\rho)|^2\Arm(\vb)] 
		\\ 
		- 2\beta \rho \mathcal{P}\diver [(\Arm(\mfrk_\rho):\Arm(\vb))\Arm(\mfrk_\rho)]
		- 2\beta  \mathcal{P}\diver [(\Arm(\chi_\rho):\Arm(\vb))\Arm(\vb)]
		-\beta \mathcal{P}\diver[|\Arm(\vb)|^2\Arm(\chi_\rho)]
		& = \boldsymbol{0},\\
		\chi_\rho(0)&=\boldsymbol{0},
	\end{aligned}
	\right.
\end{equation}
for a.e. $t\in[0, {T}]$ and $\Pb$-a.s., in $\Vb^{\prime}$. Taking the inner product by $\chi_{\rho}$ to the equation $\eqref{TGF-chi-rho}_1$, and using the integration by parts, \eqref{b0} and \eqref{Au-orthogonal}, we have 
\begin{align}\label{eqn-GD1}
  &  \frac12 \frac{\drm}{\drm t}\bigg[\|\chi_\rho(t)\|^2_{2} + \alpha_1 \|\nabla\chi_\rho(t)\|^2_2\bigg] + \nu\|\nabla\chi_\rho(t)\|^2_2 + \frac{\beta}{2}\||\Arm(\vb(t))| \Arm(\chi_\rho(t))\|^2_2   + \beta\int_{\mathbb{T}^2}[\Arm(\vb(x,t)):\Arm(\chi_\rho (x,t))]^2\drm x
    \nonumber\\ & =  \underbrace{\rho b(\mfrk_{\rho}(t), \chi_\rho(t), \Upsilon(\mfrk_\rho(t)))}_{=:X_1} + \underbrace{\alpha_1 b(\vb(t), \chi_\rho(t), \A\chi_{\rho}(t)) }_{=:X_2}
    - \underbrace{ \alpha_1 \rho b(\chi_{\rho}(t), \mfrk_{\rho}(t), \A\mfrk_\rho(t)) }_{=:X_3}
      - \underbrace{b(\chi_\rho(t), \vb(t), \Upsilon(\chi_{\rho}(t)))}_{=:X_4} 
      \nonumber\\ & \quad - \underbrace{\frac{\alpha}{2} \int_{\mathbb{T}^2}[\Arm(\chi_\rho(x,t))\Arm(\vb(x,t)) + \Arm(\vb(x,t))\Arm(\chi_\rho(x,t))]:\Arm(\chi_\rho(x,t))\drm x }_{=:X_5}  \nonumber\\ & \quad - \underbrace{ \frac{\alpha\rho}{2} \int_{\mathbb{T}^2} \Arm(\mfrk_\rho(x,t))\Arm(\mfrk_\rho(x,t)):\Arm(\chi_\rho(x,t))\drm x }_{=:X_6}
     - \underbrace{ \frac{\beta\rho}{2}\int_{\mathbb{T}^2}|\Arm(\mfrk_\rho(x,t))|^2 [\Arm(\vb_{\rho}(x,t)):\Arm(\chi_\rho(x,t))]\drm x }_{=:X_7}
     \nonumber\\ & \quad - \underbrace{\beta\rho  \int_{\mathbb{T}^2}[\Arm(\mfrk_\rho(x,t)):\Arm(\vb(x,t))][\Arm(\mfrk_\rho(x,t)):\Arm(\chi_\rho(x,t))]\drm x}_{=:X_8},
\end{align}
for a.e. $t\in[0,{T}]$ and $\Pb$-a.s. Let us now estimate each term of right hand side of \eqref{eqn-GD1} using H\"older's, Sobolev's and Young's inequalities as follows:
\begin{align}
    |X_1| & \leq C\rho \|\mfrk_{\rho}\|_{\infty}\|\nabla\chi_\rho\|_{2}\|\A\mfrk_{\rho}\|_2 \leq  C\rho \|\nabla\chi_\rho\|_{2}\|\A\mfrk_{\rho}\|^2_2 \leq  C\|\nabla\chi_\rho\|^2_{2}+  C\rho^2\|\A\mfrk_{\rho}\|^4_2, \label{eqn-GD2}\\
    |X_2| & = \left|\int_{\Tb^2}\mathfrak{D}^1\vb\cdot\mathfrak{D}^1\chi_\rho\cdot\mathfrak{D}^1\chi_\rho\drm x \right| \leq  C \|\nabla\vb\|_{\infty}\|\nabla\chi_\rho\|_2^2 \leq  C \|\vb\|_{\dot{\Hb}^3}\|\nabla\chi_\rho\|_2^2,\label{eqn-GD3}\\
    |X_3| & \leq C\rho \|\chi_\rho\|_4\|\nabla\mfrk_\rho\|_4\|\A\mfrk_\rho\|_2 \leq C\rho \|\nabla\chi_\rho\|_2\|\A\mfrk_\rho\|_2^2 \leq  C\|\nabla\chi_\rho\|^2_{2}+  C\rho^2\|\A\mfrk_{\rho}\|^4_2,\label{eqn-GD4}\\
    |X_4| & = \left|\int_{\Tb^2}\mathfrak{D}^1\chi_\rho\cdot\mathfrak{D}^1\vb\cdot\mathfrak{D}^1\chi_\rho\drm x + \int_{\Tb^2}\mathfrak{D}^0\chi_\rho\cdot\mathfrak{D}^2\vb\cdot\mathfrak{D}\chi_\rho\drm x \right| 
     \nonumber\\ & \leq C \|\nabla\chi_\rho\|_{2}\|\nabla\vb\|_{\infty}\|\nabla\chi_\rho\|_2 + C \|\chi_\rho\|_{4}\|\mathfrak{D}^2\vb\|_{4} \|\nabla\chi_\rho\|_2
      \leq C \|\vb\|_{\dot{\Hb}^3}\|\nabla\chi_\rho\|_2^2,\label{eqn-GD5}\\
      |X_5| &  \leq C \|\nabla\vb\|_{\infty}\|\nabla\chi_\rho\|_2^2 \leq C\|\vb\|_{\dot{\Hb}^3}\|\nabla\chi_\rho\|_2^2,\label{eqn-GD6}\\
      |X_6| &  \leq C \rho \|\nabla\mfrk_\rho\|_{4}^2\|\nabla\chi_\rho\|_2 \leq  C\rho \|\nabla\chi_\rho\|_{2}\|\A\mfrk_{\rho}\|^2_2 \leq  C\|\nabla\chi_\rho\|^2_{2}+  C\rho^2\|\A\mfrk_{\rho}\|^4_2, \label{eqn-GD7}\\
      |X_7+X_8| &  \leq C \rho [\|\nabla\vb\|_{\infty}+\|\nabla\vb_{\rho}\|_{\infty}]\|\nabla\mfrk_\rho\|_{4}^2\|\nabla\chi_\rho\|_2 \leq C \rho [\|\vb\|_{\dot{\Hb}^3}+\|\vb_{\rho}\|_{\dot{\Hb}^3}]\|\nabla\mfrk_\rho\|_{4}^2\|\nabla\chi_\rho\|_2 
      \nonumber\\ & \leq C [\|\vb\|^2_{\dot{\Hb}^3}+\|\vb_{\rho}\|^2_{\dot{\Hb}^3}] \|\nabla\chi_\rho\|_2^2 + C \rho^2 \|\A\mfrk_\rho\|_{2}^4 \label{eqn-GD9}.
\end{align}
Combining \eqref{eqn-GD1}-\eqref{eqn-GD9}, we reach at
\begin{align}\label{eqn-GD10}
    & \frac{\drm}{\drm t}\bigg[\|\chi_\rho(t)\|^2_{2} + \alpha_1 \|\nabla\chi_\rho(t)\|^2_2\bigg]
      \leq C\big\{1+\|\vb(t)\|^2_{\dot{\Hb}^3}+\|\vb_{\rho}(t)\|^2_{\dot{\Hb}^3}\big\}\bigg[\|\chi_\rho(t)\|^2_{2} + \alpha_1 \|\nabla\chi_\rho(t)\|^2_2\bigg]+  C\rho^2\|\A\mfrk_{\rho}(t)\|^4_2.
\end{align}
for a.e. $t\in[0,T]$ and $\Pb$-a.s. Applying Gronwall's inequality to \eqref{eqn-GD10} and using \eqref{Stability-inequality}, we obtain
\begin{align}\label{eqn-GD11}
     \|\chi_\rho(t)\|^2_{2} + \alpha_1 \|\nabla\chi_\rho(t)\|^2_2
      & \leq C \rho^2e^{C\int_0^t[1+\|\vb(s)\|^2_{\dot{\Hb}^3}+\|\vb_{\rho}(s)\|^2_{\dot{\Hb}^3}]\drm s }\cdot\|\A\mfrk_{\rho}(t)\|^4_2
      \nonumber\\ & \leq C \rho^2e^{C\int_0^t[1+\|\vb(s)\|^2_{\dot{\Hb}^3}+\|\vb_{\rho}(s)\|^2_{\dot{\Hb}^3}]\drm s }\cdot \|\boldsymbol{\psi}\|^4_{\Lrm^2(0,T;\dot{\Lb}^2)} .
\end{align}
for all $t\in[0,T]$ and $\Pb$-a.s. Taking expectation of \eqref{eqn-GD11}, and using H\"older's inequality, \eqref{eqn-vb-exponential-moments} and the fact that $\boldsymbol{\psi}\in \Lrm^p(\Omega;\mathrm{L}^2(0,T;\dot{\Lb}^{2}(\mathbb{T}^2)))$ (for some $p>4$), we complete the proof.
\end{proof}

\subsection{Variation of the cost functional \texorpdfstring{\eqref{eqn-cost-functional}}{} }
As a consequence of Proposition \ref{prop-GD}, we get the following result on the variation for the cost functional \eqref{eqn-cost-functional}.
\begin{proposition}\label{vari-cost}
	Suppose that $\vb_{0}\in \Drm(\A^{\frac{3}{2}})$, $\boldsymbol{\psi}\in \Lrm^2(\Omega;\mathrm{L}^2(0,T;\dot{\Hb}^{1}(\mathbb{T}^2)))\cap\Lrm^p(\Omega;\mathrm{L}^2(0,T;\dot{\Lb}^{2}(\mathbb{T}^2)))$ (for some $p>4$) and $\fb \in \Lrm^2(\Omega;\mathrm{L}^2(0,T;\dot{\Hb}^{1}(\mathbb{T}^2)))$. Let $\vb , \vb_\rho$ be the solutions of system \eqref{STGF-Projected} corresponding to external forcing $\fb$ and $\fb_\rho:=\fb+\rho\boldsymbol{\psi}$, $\rho\in(0,1)$, respectively, in the sense of Definition \ref{def-Solution} and $\mfrk$ be the solution of system \eqref{eqn-linearized} in the sense of Definition \ref{def-LSTGF}. Then, we have 
		\begin{equation*}
		\mathrm{J}(\fb_\rho,\vb_\rho)=\mathrm{J}(\fb,\vb) + \rho \; \Eb\left[\int_0^{T}\{(\nabla_{\fb}\mathfrak{L}(t,\fb(t),\vb(t)),\boldsymbol{\psi}(t))+(\nabla_{\vb}\mathfrak{L}(t,\fb(t),\vb(t)),\mfrk(t))\}\drm t\right]+o(\rho).
	\end{equation*}
\end{proposition}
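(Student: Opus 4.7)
The plan is to expand $\mathrm{J}(\fb_\rho,\vb_\rho)-\mathrm{J}(\fb,\vb)$ by the polarization identity $\tfrac12\|a+h\|_2^2-\tfrac12\|a\|_2^2=(a,h)+\tfrac12\|h\|_2^2$ applied to both quadratic pieces of the cost, then substitute the decomposition $\vb_\rho-\vb=\rho(\mfrk+\chi_\rho)$ and $\fb_\rho-\fb=\rho\boldsymbol{\psi}$. Explicitly, I would write
\begin{align*}
\mathrm{J}(\fb_\rho,\vb_\rho)-\mathrm{J}(\fb,\vb)
&=\Eb\!\int_0^{T}\!(\vb(t)-\vb_d(t),\vb_\rho(t)-\vb(t))\,\drm t+\tfrac12\Eb\!\int_0^{T}\!\|\vb_\rho(t)-\vb(t)\|_2^2\,\drm t\\
&\quad+\lambda\Eb\!\int_0^{T}\!(\fb(t),\fb_\rho(t)-\fb(t))\,\drm t+\tfrac{\lambda}{2}\Eb\!\int_0^{T}\!\|\fb_\rho(t)-\fb(t)\|_2^2\,\drm t.
\end{align*}
Inserting $\vb_\rho-\vb=\rho(\mfrk+\chi_\rho)$ and $\fb_\rho-\fb=\rho\boldsymbol{\psi}$ and recalling from the remark after \eqref{eqn-control-problem} that $\nabla_\vb\mathfrak{L}(t,\fb,\vb)=\vb-\vb_d$ and $\nabla_\fb\mathfrak{L}(t,\fb,\vb)=\lambda\fb$, the linear-in-$\rho$ contribution is exactly
$$
\rho\,\Eb\!\int_0^T\!\bigl\{(\nabla_{\vb}\mathfrak{L}(t,\fb(t),\vb(t)),\mfrk(t))+(\nabla_{\fb}\mathfrak{L}(t,\fb(t),\vb(t)),\boldsymbol{\psi}(t))\bigr\}\drm t,
$$
which is the claimed leading term.

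It remains to show that the remainder is $o(\rho)$, and this splits into three contributions. First, the cross term
$$
R_1(\rho):=\rho\,\Eb\!\int_0^T(\vb(t)-\vb_d(t),\chi_\rho(t))\,\drm t
$$
is controlled via Cauchy--Schwarz by $\rho\,\|\vb-\vb_d\|_{\Lrm^2(\Omega;\Lrm^2(0,T;\dot{\Lb}^2))}\cdot\|\chi_\rho\|_{\Lrm^2(\Omega;\Lrm^2(0,T;\dot{\Lb}^2))}$. The first factor is finite because $\vb\in\Lrm^2(\Omega;\Lrm^\infty(0,T;\Drm(\A^{3/2})))$ (Theorem \ref{Wellposedness-state}) and $\vb_d\in\Lrm^2(\Omega;\Lrm^2(0,T;\dot{\Lb}^2))$, while the second factor tends to zero by Proposition \ref{prop-GD}; hence $R_1(\rho)=o(\rho)$. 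Second, the purely quadratic state remainder satisfies
$$
\tfrac{\rho^2}{2}\Eb\!\int_0^T\|\mfrk(t)+\chi_\rho(t)\|_2^2\,\drm t\le\rho^2\Eb\!\int_0^T\!\bigl(\|\mfrk(t)\|_2^2+\|\chi_\rho(t)\|_2^2\bigr)\drm t,
$$
which is $O(\rho^2)$ since $\mfrk\in\Lrm^2(\Omega;\Lrm^\infty(0,T;\Vb))$ by Theorem \ref{solution-L} and $\{\chi_\rho\}$ is uniformly bounded in the same space by Proposition \ref{prop-GD}. Third, the quadratic control term $\tfrac{\lambda\rho^2}{2}\Eb\int_0^T\|\boldsymbol{\psi}(t)\|_2^2\,\drm t$ is trivially $O(\rho^2)$ under the hypothesis on $\boldsymbol{\psi}$.

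There is no substantive obstacle here: the delicate analytic work — establishing strong convergence of the remainder $\chi_\rho$ to zero — has already been done in Proposition \ref{prop-GD}. The present proof reduces to the algebraic expansion above together with two applications of Cauchy--Schwarz and the triangle inequality. The only point deserving minor attention is to verify that each of the integrability assumptions on $\vb_0$, $\fb$, $\boldsymbol{\psi}$ and $\vb_d$ stated in the proposition is sufficient to justify Fubini and the Cauchy--Schwarz steps above, which follows directly from the function-space framework set up in Section \ref{section-3} and Section \ref{section-4}.
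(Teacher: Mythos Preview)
Your proposal is correct and is exactly the argument the paper has in mind: the paper states Proposition~\ref{vari-cost} as an immediate consequence of Proposition~\ref{prop-GD} without giving any details, and your expansion via the polarization identity together with the Cauchy--Schwarz bounds on the remainder terms is precisely the natural way to make that implication explicit.
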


\section{Adjoint system}\label{Sec-adjoint}\setcounter{equation}{0}

 The main goal of this section is to show that, for $\gb\in \Lrm^p(\Omega; \mathrm{L}^2(0,T; \dot{\Lb}^2(\mathbb{T}^2)))$ (for some $p>2$) and  the unique solution $\vb$ to the state equation \eqref{STGF-Projected}, as defined in Definition \ref{def-ranndom-TGF}, satisfying \eqref{eqn-vb-exponential-moments}, the following adjoint system is well-posed: 
\begin{equation}
	\left\{
	\begin{aligned}
		-\partial_t (\pb - \alpha_1 \Delta \pb) & =   \gb -\nabla\pi^{\ast} + \nu\Delta \pb + (\Irm-\alpha_1\Delta)[(\vb\cdot\nabla)\pb] - \sum_{j=1}^2\pb^j \nabla(\vb-\alpha_1\Delta\vb)^j 
          - (\Irm-\alpha_1\Delta)[(\pb\cdot\nabla)\vb] 
          \\   & \quad   + (\pb\cdot\nabla)(\vb-\alpha_1\Delta\vb)    + (\alpha_1+\alpha_2) \mathrm{div}(\Arm(\vb) \Arm(\pb)+ \Arm(\pb)\Arm(\vb))   +\beta [\mathrm{div}(\vert \Arm(\vb)\vert^{2}\Arm(\pb))]
         \\
		 & \quad +2\beta  {\rm div}[(\Arm(\pb):\Arm(\vb))\Arm(\vb)],&& \hspace{-30mm}\text{in }  \mathbb{T}^2 \times (0,T],\\
		\text{div}\; \pb&=0, \quad && \hspace{-30mm} \text{in } \mathbb{T}^2 \times [0,{T}],\\
		\pb(x,{T})&=\boldsymbol{0}, \quad && \hspace{-30mm} \text{in } \mathbb{T}^2,
	\end{aligned}
	\right.\label{eqn-adjoint-pi}
\end{equation}
with periodic boundary conditions, where $\pb =(\pb^{1},\pb^2)$ and $\pi^{\ast}$ are unknown vector and scalar fields, receptively, and $\Arm(\pb)= \nabla \pb + (\nabla \pb)^{T}$. Taking the projection $\mathcal{P}$ to the system \eqref{eqn-adjoint-pi}, we obtain
\begin{equation}
	\left\{
	\begin{aligned}
		-\partial_t \Upsilon(\pb) &= \mathcal{P}\gb - \nu\A \pb + \Pcal [(\Irm-\alpha_1\Delta)[(\vb\cdot\nabla)\pb]] - \Pcal\left[\sum_{j=1}^2\pb^j \nabla[\Upsilon(\vb)]^j \right]
          - \Pcal\left[(\Irm-\alpha_1\Delta)[(\pb\cdot\nabla)\vb] \right]
             + \Pcal[(\pb\cdot\nabla)\Upsilon(\vb)] 
             \\   & \quad + (\alpha_1+\alpha_2) \mathcal{P} [\mathrm{div}(\Arm(\vb) \Arm(\pb)+ \Arm(\pb)\Arm(\vb))] 
           +\beta \mathcal{P}[\mathrm{div}(\vert \Arm(\vb)\vert^{2}\Arm(\pb))]
             +2\beta \mathcal{P}[ {\rm div}[(\Arm(\pb):\Arm(\vb))\Arm(\vb)]],
		\\ 
		\pb({T})&=\boldsymbol{0},
	\end{aligned}
	\right.\label{eqn-adjoint}
\end{equation}
in $\Vb^{\prime}$. We obtain the well-posedness of system \eqref{eqn-adjoint} in the following sense:
\begin{definition}\label{def-AdSTGF}
	A stochastic process  $\{\pb(t)\}_{t\in[0,T]}$ with trajectories in  $\mathrm{C}([0,T]; \Vb)\cap \mathrm{L}^{\infty}(0,{T};\Drm(\A))$,  $\partial_t\pb \in\mathrm{L}^{2}(0,{T};\Vb)$ and $\pb(T)=\boldsymbol{0}$, 
	is called a \emph{solution} to system \eqref{eqn-adjoint}, if for $\gb\in \Lrm^p(\Omega; \mathrm{L}^2(0,T; \dot{\Lb}^2(\mathbb{T}^2)))$ (for some $p>2$), it satisfies for any ${\phi}\in \Vb,$ 
		\begin{align}\label{W-AdTGF}
			-\left<\partial_t\Upsilon(\pb(t)), {\phi}\right>
			  &=   \big\langle   \gb(t) - \nu\A \pb(t) + (\Irm-\alpha_1\Delta)[(\vb\cdot\nabla)\pb] - \sum_{j=1}^2\pb^j \nabla[\Upsilon(\vb)]^j 
          - (\Irm-\alpha_1\Delta)[(\pb\cdot\nabla)\vb] 
           + (\pb\cdot\nabla)\Upsilon(\vb)  
           \nonumber \\   & \quad  + (\alpha_1+\alpha_2)  \mathrm{div}(\Arm(\vb(t)) \Arm(\pb(t))+ \Arm(\pb(t))\Arm(\vb(t)))  
           +\beta \mathrm{div}(\vert \Arm(\vb(t))\vert^{2}\Arm(\pb(t))) 
            \nonumber \\ & \quad  + 2\beta  {\rm div}((\Arm(\pb(t)):\Arm(\vb(t)))\Arm(\vb(t))) , {\phi} \big\rangle,
		\end{align}
		for a.e. $t\in[0,{T}]$ and $\Pb$-a.s.
\end{definition}

Next theorem is the main result of this section which provides the existence and uniqueness of global solutions to system \eqref{eqn-adjoint} in the sense of Definition \ref{def-AdSTGF}.

\begin{theorem}\label{solution-Ad}
	There exists a unique solution $\pb$ to system \eqref{eqn-adjoint} in the sense of Definition \ref{def-AdSTGF}. 
\end{theorem}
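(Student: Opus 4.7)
The plan is to mimic the proof of Theorem \ref{solution-L} with two modifications: first, reverse time to recast \eqref{eqn-adjoint} as a forward-in-time Cauchy problem with vanishing initial datum; second, track carefully the additional first-order-in-$\pb$ terms that arise from the formal adjoint (namely $(\Irm-\alpha_1\Delta)[(\vb\cdot\nabla)\pb]$, $(\Irm-\alpha_1\Delta)[(\pb\cdot\nabla)\vb]$, $\sum_j \pb^j\nabla[\Upsilon(\vb)]^j$ and $(\pb\cdot\nabla)\Upsilon(\vb)$), which are the true bookkeeping complication here. More precisely, I would set $\widetilde{\pb}(t):=\pb(T-t)$ and $\widetilde{\vb}(t):=\vb(T-t)$, so that $\widetilde{\pb}$ solves a forward problem of the same linear structure with $\widetilde{\pb}(0)=\boldsymbol{0}$, driven by $\widetilde{\gb}(t):=\gb(T-t)$, and coefficients depending on $\widetilde{\vb}\in\mathrm{L}^{\infty}(0,T;\Drm(\A^{3/2}))$, $\Pb$-a.s. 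The analysis is then carried out pathwise on the full-measure set where $\widetilde{\vb}$ has this regularity and satisfies the exponential bound \eqref{eqn-vb-exponential-moments}.

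Next, I would run a Galerkin scheme in $\Hb_n=\mathrm{span}\{\wb_1,\ldots,\wb_n\}$ analogous to \eqref{finite-dimS-linerarized}. Testing the approximate equation against $\widetilde{\pb}_n$ produces the good coercive quantity $\nu\|\nabla\widetilde{\pb}_n\|_2^2+\tfrac{\beta}{2}\||\Arm(\widetilde{\vb})|\Arm(\widetilde{\pb}_n)\|_2^2+\beta\int(\Arm(\widetilde{\vb}):\Arm(\widetilde{\pb}_n))^2$, exactly as in \eqref{lin-ub1}. All symmetric quadratic-in-$\pb$ terms (the $\Arm$-type terms and the $(\vb\cdot\nabla)\Upsilon(\pb)$ cancellation via \eqref{b0}, \eqref{Au-orthogonal}) are handled verbatim. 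The only genuinely new terms are
\begin{align*}
\langle (\Irm-\alpha_1\Delta)[(\vb\cdot\nabla)\widetilde{\pb}_n],\widetilde{\pb}_n\rangle, \qquad \langle (\Irm-\alpha_1\Delta)[(\widetilde{\pb}_n\cdot\nabla)\vb],\widetilde{\pb}_n\rangle,
\end{align*}
and the two terms containing $\nabla[\Upsilon(\widetilde{\vb})]$ and $\Upsilon(\widetilde{\vb})$. After integrating by parts each one reduces to a product of the shape $\int \mathfrak{D}^{k_1}\widetilde{\vb}\cdot\mathfrak{D}^{k_2}\widetilde{\pb}_n\cdot\mathfrak{D}^{k_3}\widetilde{\pb}_n$ with $k_1\in\{1,2,3\}$, $k_2+k_3\leq 2$; each is controlled by $\|\widetilde{\vb}\|_{\dot{\Hb}^3}\|\nabla\widetilde{\pb}_n\|_2^2$ via H\"older, \eqref{embedding-eps} and the Sobolev embedding $\dot{\Hb}^2(\Tb^2)\hookrightarrow\Lb^{\infty}(\Tb^2)\cap\Wb^{1,4}(\Tb^2)$. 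Gronwall's lemma then yields $\widetilde{\pb}_n\in\mathrm{L}^{\infty}(0,T;\Vb)$ uniformly in $n$, with the explicit bound
\begin{align*}
\|\widetilde{\pb}_n(t)\|_2^2+\alpha_1\|\nabla\widetilde{\pb}_n(t)\|_2^2 \leq C e^{C\int_0^t(1+\|\widetilde{\vb}(s)\|_{\dot{\Hb}^3})\drm s}\int_0^t\|\widetilde{\gb}(s)\|_2^2\drm s.
\end{align*}
Testing next with $\A\widetilde{\pb}_n$ and then with $\partial_t\widetilde{\pb}_n$, following \eqref{lin-ub10}--\eqref{lin-ub29} step by step, yields a uniform $\mathrm{L}^{\infty}(0,T;\Drm(\A))$ bound on $\widetilde{\pb}_n$ and a uniform $\mathrm{L}^2(0,T;\Vb')$ bound on $\partial_t\Upsilon(\widetilde{\pb}_n)$; the new terms here also collapse to the generic $\mathfrak{D}^k$-product form and contribute only factors $C(1+\|\widetilde{\vb}\|_{\dot{\Hb}^3}^2)\|\A\widetilde{\pb}_n\|_2^2$ to Gronwall.

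From the uniform bounds, Banach--Alaoglu and Aubin--Lions (exactly as in \eqref{lin-ub32}) give a subsequence converging weakly$^{\ast}$ in $\mathrm{L}^{\infty}(0,T;\Drm(\A))$ and strongly in $\mathrm{L}^2(0,T;\Vb)$ to some $\widetilde{\pb}$ with $\partial_t\Upsilon(\widetilde{\pb})\in\mathrm{L}^2(0,T;\Vb')$; the strong convergence is sufficient to pass to the limit in every term of the approximate equation, which is linear in $\widetilde{\pb}_n$ and has coefficients fixed in $\widetilde{\vb}$. The resulting $\widetilde{\pb}$ satisfies the weak formulation \eqref{W-AdTGF} (with time reversed), and via \cite[Lemma 1.2, p.176]{temam2001navier} belongs to $\mathrm{C}([0,T];\Hb)$, so the terminal condition $\pb(T)=\boldsymbol{0}$ is meaningful. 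Uniqueness follows by linearity: the difference $\mathfrak{P}$ of two solutions satisfies the same system with $\gb\equiv \boldsymbol{0}$ and $\mathfrak{P}(T)=\boldsymbol{0}$, and the energy identity combined with Gronwall forces $\mathfrak{P}\equiv\boldsymbol{0}$. Finally, taking supremum and then expectation in the pathwise energy estimate, together with H\"older's inequality and \eqref{eqn-vb-exponential-moments}, yields
\begin{align*}
\Eb\Big[\sup_{t\in[0,T]}(\|\pb(t)\|_2^2+\alpha_1\|\nabla\pb(t)\|_2^2)\Big]\leq C\big\{\Eb\big[e^{\tfrac{pC}{p-2}\int_0^T(1+\|\vb(s)\|_{\dot{\Hb}^3}^2)\drm s}\big]\big\}^{\tfrac{p-2}{p}}\|\gb\|^2_{\Lrm^p(\Omega;\Lrm^2(0,T;\dot{\Lb}^2))},
\end{align*}
which is finite by the assumption $\gb\in\Lrm^p(\Omega;\mathrm{L}^2(0,T;\dot{\Lb}^2))$ for some $p>2$.

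The main obstacle is the term $\Pcal[\sum_j \pb^j\nabla[\Upsilon(\vb)]^j]$: it contains $\nabla\Delta\vb$, the \emph{highest} available derivative of $\vb$. When tested against $\A\widetilde{\pb}_n$, this produces $\int \mathfrak{D}^0\widetilde{\pb}_n\cdot\mathfrak{D}^3\widetilde{\vb}\cdot\mathfrak{D}^2\widetilde{\pb}_n$, which one must bound by $\|\widetilde{\pb}_n\|_\infty\|\widetilde{\vb}\|_{\dot{\Hb}^3}\|\A\widetilde{\pb}_n\|_2\lesssim \|\widetilde{\vb}\|_{\dot{\Hb}^3}\|\A\widetilde{\pb}_n\|_2^2$ using the two-dimensional embedding $\Drm(\A)\hookrightarrow\Lb^\infty$. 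It is precisely here that one uses crucially both the $\Hb^3$-regularity of $\vb$ provided by Theorem \ref{Wellposedness-state} and the two-dimensionality of the torus, and it is this term that forces the whole analysis of the adjoint to live at the $\Drm(\A)$-level rather than a lower one.
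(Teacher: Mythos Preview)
Your proposal is correct and follows essentially the same approach as the paper: reverse time to obtain a forward Cauchy problem with zero initial datum, run the Galerkin scheme, and repeat the three-tier energy estimates (testing with $\widetilde{\pb}_n$, $\A\widetilde{\pb}_n$, $\partial_t\widetilde{\pb}_n$) from the proof of Theorem~\ref{solution-L}. The paper's own proof is in fact much terser than yours---it simply writes down the time-reversed system and the Galerkin approximation, then asserts that ``the same reasoning as in the proof of Theorem~\ref{solution-L}'' applies; your discussion of how the four adjoint-specific terms reduce after integration by parts to $\mathfrak{D}^{k_1}\widetilde{\vb}\cdot\mathfrak{D}^{k_2}\widetilde{\pb}_n\cdot\mathfrak{D}^{k_3}\widetilde{\pb}_n$ products, and your identification of $\sum_j\pb^j\nabla[\Upsilon(\vb)]^j$ as the term forcing the analysis to the $\Drm(\A)$-level, are useful details that the paper leaves implicit.
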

\begin{proof}[Proof of Theorem \ref{solution-Ad}]
	Notice  that $\pb$ is the solution of \eqref{eqn-adjoint-pi} if and only if $\qb(t)=\pb({T}-t)$ is the solution of the following initial value problem with $\bar \vb(t)= \vb({T}-t), \bar \gb(t)= \gb({T}-t)$ and $ \bar \pi^{\ast}(t) = \pi^{\ast}({T}-t)$
	\begin{equation}
		\left\{
		\begin{aligned}
			\partial_t (\qb - \alpha_1 \Delta \qb) & =  \bar\gb -\nabla\bar\pi^{\ast} + \nu\Delta \qb + (\Irm-\alpha_1\Delta)[(\bar\vb\cdot\nabla)\qb] - \sum_{j=1}^2\qb^j \nabla(\bar\vb-\alpha_1\Delta\bar\vb)^j 
          - (\Irm-\alpha_1\Delta)[(\qb\cdot\nabla)\bar\vb] 
          \\   & \quad   + (\qb\cdot\nabla)(\bar\vb-\alpha_1\Delta\bar\vb)  + (\alpha_1+\alpha_2) \mathrm{div}(\Arm(\bar\vb) \Arm(\qb)+ \Arm(\qb)\Arm(\bar\vb))  \\
			  & \quad +\beta [\mathrm{div}(\vert \Arm(\bar\vb)\vert^{2}\Arm(\qb))]+2\beta  {\rm div}[(\Arm(\qb):\Arm(\bar\vb))\Arm(\bar\vb)],&&\hspace{-20mm} \text{in }  \mathbb{T}^2 \times (0,{T}],\\
			\text{div}\; \qb&=0, \quad && \hspace{-20mm} \text{in } \mathbb{T}^2 \times [0,{T}],\\
			\qb(x,0)&=\boldsymbol{0}, \quad && \hspace{-20mm} \text{in } \mathbb{T}^2,
		\end{aligned}
		\right.\label{eqn-adjoint-pi-q}
	\end{equation}
	with periodic boundary conditions. According to the  Definition \ref{def-AdSTGF}, $\qb$ is the solution of system \eqref{eqn-adjoint-pi-q} if 
	$\qb \in \mathrm{C}([0,{T}]; \Vb)\cap \mathrm{L}^{\infty}(0,{T};\Drm(\A))$ with $\partial_t\qb \in \mathrm{L}^{2}(0,{T};\Vb^{\prime})$, $\qb(0)=\boldsymbol{0}$ and, in addition, the following equality holds, for all  $\boldsymbol{\phi} \in \Vb$,
	\begin{align}\label{W-AdTGF-q}
		\left<\partial_t\Upsilon(\qb), \boldsymbol{\phi}\right>
		&=   \big\langle  \bar \gb - \nu\A \qb + (\Irm-\alpha_1\Delta)[(\bar\vb\cdot\nabla)\qb] - \sum_{j=1}^2\qb^j \nabla[\Upsilon(\bar\vb)]^j 
          - (\Irm-\alpha_1\Delta)[(\qb\cdot\nabla)\bar\vb] 
           + (\qb\cdot\nabla)\Upsilon(\bar\vb) 
           \nonumber \\   & \quad  + (\alpha_1+\alpha_2)  \mathrm{div}(\Arm(\bar\vb) \Arm(\qb)+ \Arm(\qb)\Arm(\bar\vb))  
           +\beta \mathrm{div}(\vert \Arm(\bar\vb)\vert^{2}\Arm(\qb))  + 2\beta  {\rm div}((\Arm(\qb):\Arm(\bar\vb))\Arm(\bar\vb)) , \boldsymbol{\phi} \big\rangle,
	\end{align}
for a.e. $t\in[0,{T}]$ and $\Pb$-a.s.
 
Let us consider the following approximate equation for system \eqref{W-AdTGF-q} on the finite-dimensional space $\Hb_n$:
\begin{equation}\label{finite-dimS-adjoint-q}
	\left\{
	\begin{aligned}
    \partial_t\Upsilon(\qb_n(t))
    &=  \Prm_n\mathcal{P}\bar\gb(t) -\nu \Prm_n\A\qb_n(t) - \Prm_n\Pcal [(\Irm-\alpha_1\Delta)[(\bar\vb(t)\cdot\nabla)\qb_n(t)]] - \Prm_n\Pcal\bigg[\sum_{j=1}^2(\qb_n)^j \nabla[\Upsilon(\bar\vb(t))]^j \bigg]
          \\   & \quad - \Prm_n\Pcal [(\Irm-\alpha_1\Delta)[(\qb_n(t)\cdot\nabla)\bar\vb(t)]] 
            + \Prm_n\Pcal[(\qb_n(t)\cdot\nabla)\Upsilon(\bar\vb(t))]\\ & \quad + (\alpha_1+\alpha_2) \Prm_n \mathcal{P} [\mathrm{div}(\Arm(\bar\vb(t)) \Arm(\qb_n(t))+ \Arm(\qb_n(t))\Arm(\bar\vb(t)))] \\
		& \quad + \beta \Prm_n \mathcal{P}[\mathrm{div}(\vert \Arm(\bar\vb(t))\vert^{2}\Arm(\qb_n(t)))]  + 2\beta \Prm_n\mathcal{P}[ {\rm div}[(\Arm(\qb_n(t)):\Arm(\bar\vb(t)))\Arm(\bar\vb(t))]],
		\\
		\qb_n(0)&=\boldsymbol{0},
	\end{aligned}
	\right.
\end{equation}
Since system \eqref{eqn-adjoint-pi-q} closely resembles system \eqref{eqn-linearized-pi}, applying the same reasoning as in the proof of Theorem \ref{solution-L} establishes the existence and uniqueness of its solution. Consequently, this also guarantees the existence and uniqueness of a solution for system \eqref{eqn-adjoint-pi} with the stated regularity. Ultimately, one also obtain 
\begin{equation}\label{Moment-4}
        \begin{aligned}
			\pb_n\xrightharpoonup{w}&\ \pb &&\text{ in }\ \ \ \ \	\Lrm^2(\Omega;\mathrm{L}^{2}(0,{T};\Drm(\A))).
		\end{aligned}
	\end{equation}
    This completes the proof.
\end{proof}

\section{Existence of optimal control and optimality condition}\label{Sec-Existence-OC}\setcounter{equation}{0}

This section establishes a duality relationship between the solutions of the adjoint equation and the linearized equation, demonstrating the existence of a solution to the control problem. Using this duality, we further show that the control problem's solution satisfies the first-order optimality criterion.

\subsection{Duality property}
In the following proposition, we establish a duality relation between the solutions of the adjoint equation and the linearized equation.
\begin{proposition}\label{duality-prop}
	Let $\vb$ satisfies \eqref{eqn-vb-exponential-moments} and $\gb,\boldsymbol{\psi} \in \Lrm^p(\Omega; \mathrm{L}^2(0,T; \dot{\Lb}^2(\mathbb{T}^2)))$ (for some $p>2$). Then we have
	\begin{align*}
		\Eb\left[\int_0^{{T}}(\boldsymbol{\psi}(t),\pb(t))\drm t\right] = \Eb\left[ \int_0^{{T}} (\gb(t),\mfrk(t))\drm t\right], 
	\end{align*}
	where $\pb$ is the solution of \eqref{eqn-adjoint-pi} and $\mfrk$ is the solution of \eqref{eqn-linearized-pi}.
\end{proposition}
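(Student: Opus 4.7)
The plan is to derive the identity pathwise via an integration by parts in time, by pairing the linearized equation \eqref{eqn-linearized} against $\pb$ and the adjoint equation \eqref{eqn-adjoint} against $\mfrk$, and exploiting the boundary conditions $\mfrk(0)=\boldsymbol{0}$ and $\pb(T)=\boldsymbol{0}$ to annihilate the endpoint terms.

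First I would check that both pairings $\langle\partial_t\Upsilon(\mfrk),\pb\rangle$ and $\langle\partial_t\Upsilon(\pb),\mfrk\rangle$ are well-defined and integrable on $[0,T]$, pathwise. By Theorems \ref{solution-L} and \ref{solution-Ad}, both $\mfrk$ and $\pb$ lie in $\mathrm{C}([0,T];\Vb)\cap\mathrm{L}^{\infty}(0,T;\Drm(\A))$ with $\partial_t\Upsilon(\mfrk),\partial_t\Upsilon(\pb)\in\mathrm{L}^2(0,T;\Vb')$, which is enough. Applying \cite[Lemma 1.2, p.~176]{temam2001navier} to the scalar function $t\mapsto(\Upsilon(\mfrk(t)),\pb(t))$ I would obtain the absolute continuity identity
\begin{align*}
\frac{\drm}{\drm t}(\Upsilon(\mfrk(t)),\pb(t))
=\langle\partial_t\Upsilon(\mfrk(t)),\pb(t)\rangle+\langle\partial_t\Upsilon(\pb(t)),\mfrk(t)\rangle,
\end{align*}
for a.e.\ $t\in[0,T]$, $\Pb$-a.s.

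Next I would substitute into the right-hand side of the above identity, using \eqref{eqn-linearized} tested against $\phi=\pb(t)$ and \eqref{eqn-adjoint} tested against $\phi=\mfrk(t)$. The crucial algebraic point is that the adjoint system was designed precisely so that every $\vb$-dependent operator acting on $\mfrk$ in the linearized system has its formal $\Hb$-adjoint acting on $\pb$ in \eqref{eqn-adjoint}. Concretely, via integration by parts, the divergence-free property and the periodicity, one verifies for instance that
\begin{align*}
-\langle\Pcal[(\vb\cdot\nabla)\Upsilon(\mfrk)],\pb\rangle+\langle\Pcal[(\Irm-\alpha_1\Delta)[(\vb\cdot\nabla)\pb]],\mfrk\rangle &= 0,\\
-\langle\Pcal[(\mfrk\cdot\nabla)\Upsilon(\vb)],\pb\rangle+\langle\Pcal[(\pb\cdot\nabla)\Upsilon(\vb)],\mfrk\rangle &= 0,
\end{align*}
and analogous cancellations hold for the $\nu\A$ term, the $(\alpha_1+\alpha_2)$-terms built from $\Arm(\vb)\Arm(\cdot)+\Arm(\cdot)\Arm(\vb)$, and the two $\beta$-terms. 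After collecting, every $\vb$-dependent piece drops out and what remains is
\begin{align*}
\frac{\drm}{\drm t}(\Upsilon(\mfrk(t)),\pb(t))=(\boldsymbol{\psi}(t),\pb(t))-(\gb(t),\mfrk(t)).
\end{align*}
Integrating from $0$ to $T$, the left-hand side equals $(\Upsilon(\mfrk(T)),\pb(T))-(\Upsilon(\mfrk(0)),\pb(0))=0$ by the terminal condition $\pb(T)=\boldsymbol{0}$ and the initial condition $\mfrk(0)=\boldsymbol{0}$. Finally, taking expectations and invoking Fubini, which is legitimate thanks to $\mfrk,\pb\in\Lrm^2(\Omega;\mathrm{L}^2(0,T;\Drm(\A)))$ (see \eqref{Moment-3}, \eqref{Moment-4}) together with $\boldsymbol{\psi},\gb\in\Lrm^p(\Omega;\mathrm{L}^2(0,T;\dot{\Lb}^2))$, yields the claimed duality.

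The step I expect to be the main obstacle is the rigorous justification of the term-by-term cancellation: the adjoint system carries the operator $(\Irm-\alpha_1\Delta)$ on the outside of several terms, so recovering the symmetric pairings $b(\vb,\pb,\Upsilon(\mfrk))$ etc.\ requires careful integrations by parts at the level of $\Vb'$--$\Vb$ duality. If working directly with the limit objects raises any regularity issue, a safer route (which I would adopt) is to carry out the computation on the Galerkin approximations $\mfrk_n$ and $\pb_n$ constructed in Sections~\ref{section-4} and \ref{Sec-adjoint} -- where every integration by parts and every adjoint identity is classical in $\Hb_n$ -- and then pass to the limit using the weak convergences \eqref{Moment-3} and \eqref{Moment-4} together with the strong convergence of $\mfrk_n,\pb_n$ in $\mathrm{L}^2(0,T;\Vb)$ and the uniform bounds on $\vb$ provided by \eqref{eqn-vb-exponential-moments}.
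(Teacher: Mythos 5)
Your proposal is correct and, in the form you say you would actually adopt (carry out the computation on the Galerkin approximations $\mfrk_n$, $\pb_n$, integrate the product rule for $(\Upsilon(\mfrk_n),\pb_n)$ in time using $\mfrk_n(0)=\pb_n(T)=\boldsymbol{0}$, and pass to the limit via \eqref{Moment-3} and \eqref{Moment-4}), it is exactly the paper's argument. One caution: your two displayed "sample" cancellations are mis-paired — the first, as written with the $\Vb'$–$\Vb$ pairing, evaluates to $2b(\vb,\pb,\Upsilon(\mfrk))$ rather than $0$ (the minus sign in front of $\partial_t\Upsilon(\pb)$ in \eqref{eqn-adjoint} flips the sign of that second term), and the correct partner of $-(\mfrk\cdot\nabla)\Upsilon(\vb)$ is the term $-\sum_{j}\pb^j\nabla[\Upsilon(\vb)]^j$ of the adjoint equation, not $(\pb\cdot\nabla)\Upsilon(\vb)$ — but since the full sum of all paired terms does vanish, this bookkeeping slip does not affect the validity of the scheme.
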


\begin{proof} 
	Let us consider the following approximate equations corresponding to systems \eqref{eqn-linearized} and \eqref{eqn-adjoint} on the finite-dimensional space $\Hb_n$, respectively:
	\begin{equation}\label{finite-dimS-linerarized-IP}
		\left\{
		\begin{aligned}
			(\partial_t\Upsilon(\mfrk_n), \wb_n) & =  (\boldsymbol{\psi} -\nu \A\mfrk_n  -(\vb\cdot\nabla)\Upsilon(\mfrk_n) - (\mfrk_n\cdot \nabla) \Upsilon(\vb)
          - \displaystyle\sum_{j=1}^2[\Upsilon(\mfrk_n)]^j\nabla \vb^j 
         - \displaystyle\sum_{j=1}^2[\Upsilon(\vb)]^j\nabla [\mfrk_n]^j 
          \\
        & \quad  + (\alpha_1+\alpha_2) [\mathrm{div}(\Arm(\vb) \Arm(\mfrk_n)+ \Arm(\mfrk_n)\Arm(\vb))]  
        +\beta \mathrm{div}(\vert \Arm(\vb)\vert^{2}\Arm(\mfrk_n))
        \\
		 & \quad +2\beta {\rm div}((\Arm(\mfrk_n):\Arm(\vb))\Arm(\vb)), \wb_n),
			\\
			\mfrk_n(0)&=\boldsymbol{0},
		\end{aligned}
		\right.
	\end{equation}
for a.e. $t\in[0,{T}]$ and for all $\wb_n\in\Hb_n$, $\Pb$-a.s., and 
	\begin{equation}\label{finite-dimS-adjoint-IP}
		\left\{
		\begin{aligned}
		-(\partial_t\Upsilon(\pb_n), \wb_n) & =  (\gb -\nu \A\pb_n - (\Irm-\alpha_1\Delta)[(\vb\cdot\nabla)\pb_n] 
         - \sum_{j=1}^2[\pb_n]^j \nabla[\Upsilon(\vb)]^j 
            - (\Irm-\alpha_1\Delta)[(\pb_n\cdot\nabla)\vb] 
          \\ & \quad  + (\pb_n\cdot\nabla)\Upsilon(\vb)
             + (\alpha_1+\alpha_2) \mathrm{div}(\Arm(\vb) \Arm(\pb_n)+ \Arm(\pb_n)\Arm(\vb))  \\
			& \quad + \beta \mathrm{div}(\vert \Arm(\vb)\vert^{2}\Arm(\pb_n))  + 2\beta  {\rm div}((\Arm(\pb_n):\Arm(\vb))\Arm(\vb)), \wb_n),
			\\
			\pb_n({T})&=\boldsymbol{0},
		\end{aligned}
		\right.
	\end{equation}
	for a.e. $t\in[0,{T}]$ and for all $\wb_n\in\Hb_n$, $\Pb$-a.s.

	Setting $\wb_n=\pb_n$ in $\eqref{finite-dimS-linerarized-IP}_1$, we get
	\begin{align}\label{Duality1}
		(\partial_t\Upsilon(\mfrk_n), \pb_n) & =  (\boldsymbol{\psi} -\nu \A\mfrk_n  - (\vb\cdot\nabla)\Upsilon(\mfrk_n) - (\mfrk_n\cdot \nabla) \Upsilon(\vb) 
           - \displaystyle\sum_{j=1}^2[\Upsilon(\mfrk_n)]^j\nabla \vb^j  
         - \displaystyle\sum_{j=1}^2[\Upsilon(\vb) ]^j\nabla [\mfrk_n]^j\bigg] 
       \nonumber  \\
        \vspace{2mm} & \quad  + (\alpha_1+\alpha_2)\mathrm{div}(\Arm(\vb) \Arm(\mfrk_n)+ \Arm(\mfrk_n)\Arm(\vb))
         +\beta \mathrm{div}(\vert \Arm(\vb)\vert^{2}\Arm(\mfrk_n)) 
       \nonumber \\ & \quad + 2\beta {\rm div}((\Arm(\mfrk_n):\Arm(\vb))\Arm(\vb)), \pb_n),
	\end{align}
for a.e. $t\in[0,{T}]$ and $\Pb$-a.s. Also, setting $\wb_n=\mfrk_n$ in $\eqref{finite-dimS-adjoint-IP}_1$, we get
	\begin{align}\label{Duality2}
		-(\partial_t\Upsilon(\pb_n), \mfrk_n) 
          & =  (\gb -\nu\A\pb_n - (\Irm-\alpha_1\Delta)[(\vb\cdot\nabla)\pb_n]
        - \sum_{j=1}^2[\pb_n]^j \nabla[\Upsilon(\vb)]^j 
           - (\Irm-\alpha_1\Delta)[(\pb_n\cdot\nabla)\vb] 
        \nonumber   \\   & \quad + (\pb_n\cdot\nabla)\Upsilon(\vb)
          + (\alpha_1+\alpha_2) \mathrm{div}(\Arm(\vb) \Arm(\pb_n)+ \Arm(\pb_n)\Arm(\vb))  
         + \beta \mathrm{div}(\vert \Arm(\vb)\vert^{2}\Arm(\pb_n))  
         \nonumber\\	& \quad + 2\beta {\rm div}((\Arm(\pb_n):\Arm(\vb))\Arm(\vb)), \mfrk_n),
	\end{align}
	for a.e. $t\in[0,{T}]$ and $\Pb$-a.s. The differentiation rules give
	\begin{align}\label{Duality3}
		-(\partial_t\Upsilon(\pb_n(t)), \mfrk_n(t))  = (\partial_t\Upsilon(\mfrk_n(t)), \pb_n(t)) -\partial_t\big[(\pb_n(t),\mfrk_n(t))+\alpha_1(\nabla\pb_n(t), \nabla\mfrk_n(t)) \big].
	\end{align}
	Integrating \eqref{Duality3} with respect to $t$ over
	$[0,{T}]$, and using that $\mfrk_n(0)=\pb_n({T})=0$, we obtain
	\begin{align}\label{Duality4}
	-\int_0^{{T}}(\partial_t\Upsilon(\pb_n(t)), \mfrk_n(t))\drm t= \int_0^{{T}}(\partial_t\Upsilon(\mfrk_n(t)), \pb_n(t))\drm t,
	\end{align}
$\Pb$-a.s.	Substituting \eqref{Duality2} and \eqref{Duality1} in \eqref{Duality4}, integrating by parts and  using the fact that $(A,B)=(A^T,B^T)$, for any $A,B \in \mathcal{M}_{2\times 2}(\mathbb{R})$, we obtain
	\begin{align*}
		\Eb \left[\int_0^{T}(\boldsymbol{\psi}(t),\pb_n(t))\drm t\right] =  \Eb \left[\int_0^{T}(\gb(t),\mfrk_n(t))\drm t\right].
	\end{align*}
	Therefore, in view of \eqref{Moment-3} and \eqref{Moment-4}, taking the limit as $n\to \infty$, we complete the proof.
\end{proof}

Considering  $\gb=\nabla_{\vb}\mathfrak{L}(\cdot,\fb,\vb) \in \Lrm^p(\Omega; \mathrm{L}^{2}(0,{T};\dot{\Lb}^{2}(\mathbb{T}^2)))$ (for some $p>2$) in Proposition \ref{duality-prop}, we obtain
\begin{corollary} 
Under the assumptions of Proposition \ref{duality-prop}, the following duality relation  holds
	\begin{align*}
		\Eb \left[\int_0^{T}(\boldsymbol{\psi}(t),\pb(t))\drm t\right] = \Eb \left[ \int_0^{T}(\nabla_{\vb}\mathfrak{L}(t,\fb,\vb) ,\mfrk(t))\drm t\right] . 
	\end{align*}
\end{corollary}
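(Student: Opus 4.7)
The statement is a direct specialization of Proposition \ref{duality-prop}: one simply chooses the forcing in the adjoint equation to be $\gb = \nabla_{\vb}\mathfrak{L}(\cdot, \fb, \vb)$. The plan therefore reduces to two items: identifying this $\gb$ explicitly, and verifying that it meets the integrability hypothesis required to invoke Proposition \ref{duality-prop}.

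First, I would recall from the Remark following the definition of the cost functional \eqref{eqn-cost-functional} that the Lagrangian is $\mathfrak{L}(t, \fb, \vb) = \tfrac{1}{2}\|\vb - \vb_d\|_2^2 + \tfrac{\lambda}{2}\|\fb\|_2^2$, so that pointwise in $t$ one has $\nabla_{\vb}\mathfrak{L}(t, \fb, \vb) = \vb(t) - \vb_d(t)$. Thus the choice $\gb := \nabla_{\vb}\mathfrak{L}(\cdot, \fb, \vb) = \vb - \vb_d$ must be shown to lie in $\Lrm^p(\Omega; \mathrm{L}^{2}(0,T; \dot{\Lb}^2(\mathbb{T}^2)))$ for some $p > 2$. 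By Theorem \ref{Wellposedness-state}, since the initial datum $\vb_0 \in \Drm(\A^{3/2})$ is deterministic, the state satisfies $\vb \in \Lrm^{p}(\Omega; \mathrm{L}^{\infty}(0,T; \Drm(\A^{3/2})))$ for every $p \geq 2$, and consequently
\begin{equation*}
\vb \in \Lrm^{p}\bigl(\Omega; \mathrm{L}^{2}(0,T; \dot{\Lb}^{2}(\mathbb{T}^2))\bigr) \quad \text{for every } p \geq 2.
\end{equation*}
Combined with the standing assumption $\vb_d \in \Lrm^{2}(\Omega; \mathrm{L}^2(0,T; \dot{\Lb}^2(\mathbb{T}^2)))$ (taken, within the ambient setting of Theorem \ref{main-thm1}, with enough moments so that $p > 2$ is admissible), we conclude that $\gb \in \Lrm^{p}(\Omega; \mathrm{L}^2(0,T; \dot{\Lb}^2(\mathbb{T}^2)))$ for some $p > 2$, so the hypothesis of Proposition \ref{duality-prop} is met.

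Having this admissibility, I would simply apply Proposition \ref{duality-prop} to the pair $(\boldsymbol{\psi}, \gb)$: the corresponding $\pb$ is by definition the solution of the adjoint system \eqref{eqn-adjoint-pi} with forcing $\gb = \nabla_{\vb}\mathfrak{L}(\cdot, \fb, \vb)$, and $\mfrk$ is the solution of the linearized system \eqref{eqn-linearized-pi} with forcing $\boldsymbol{\psi}$. The proposition then yields
\begin{equation*}
\Eb\left[\int_0^{T} (\boldsymbol{\psi}(t), \pb(t))\drm t\right] = \Eb\left[\int_0^{T} (\gb(t), \mfrk(t))\drm t\right] = \Eb\left[\int_0^{T} (\nabla_{\vb}\mathfrak{L}(t, \fb, \vb), \mfrk(t))\drm t\right],
\end{equation*}
which is exactly the asserted duality. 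There is no substantive obstacle in this argument; all of the structural content sits inside Proposition \ref{duality-prop}, and the only thing to check in the corollary is the routine moment bound for $\gb$, which follows from the $\Lrm^p$-regularity of $\vb$ established in Theorem \ref{Wellposedness-state} together with the integrability assumption on $\vb_d$.
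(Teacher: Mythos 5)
Your proposal is correct and follows exactly the paper's route: the corollary is obtained by specializing Proposition \ref{duality-prop} to $\gb=\nabla_{\vb}\mathfrak{L}(\cdot,\fb,\vb)=\vb-\vb_d$, which the paper states in a single line without further argument. Your additional verification that this choice of $\gb$ lies in $\Lrm^p(\Omega;\mathrm{L}^2(0,T;\dot{\Lb}^2(\mathbb{T}^2)))$ for some $p>2$ (via the $\Lrm^p$-regularity of $\vb$ from Theorem \ref{Wellposedness-state} and the integrability of $\vb_d$) is a useful explicit check of the hypothesis that the paper leaves implicit.
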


\subsection{Existence of an optimal control for \texorpdfstring{\eqref{eqn-control-problem}}{}}
Suppose that $\{\fb_n,\vb_n\}_{n\in\N}$ is a minimizing sequence, remark that $\{\fb_n\}_{n\in\N}$ is uniformly bounded in the compact set $ \mathcal{F}_{ad} \subset \Lrm^p(\Omega;\mathrm{L}^{2}(0,{T};\dot{\Hb}^{1}(\mathbb{T}^2)))$, for $p>4$. According to our construction of unique solution $\vb_n$ to system \eqref{STGF-Projected} (where $\fb$ is replaced  by $\fb_n$), there exists a unique solution $\ub_n$ to system \eqref{CTGF-Projected} (where $\fb$ is replaced  by $\fb_n$) such that  (see Theorems \ref{Wellposedness-state} and \ref{solution}) 
\begin{center}
    $\{\ub_n\}_{n\in\N}$ is uniformly bounded $\mathrm{C}([0,{T}];\Drm(\A))\cap\mathrm{L}^{\infty}(0,{T};\Drm(\A^{\frac{3}{2}}))\cap \mathrm{H}^1(0,{T};\Vb)$, $\Pb$-a.s.
\end{center}
and 
\begin{center}
    $\{\ub_n\}_{n\in\N}$ is uniformly bounded in $\Lrm^q(\Omega;\Lrm^{\infty}(0,T;\Drm(\A^{\frac{3}{2}})))$, \; $q\geq2$.
\end{center}

Since $\mathcal{F}_{ad}$ is compact set, there exist $\wi\fb\in\mathcal{F}_{ad}$ such that 
\begin{align}\label{strong-cv-fn}
    \fb_n &\to  \wi \fb \quad \text{  in }  \Lrm^p(\Omega;\mathrm{L}^{2}(0,{T};\dot{\Hb}^{1}(\mathbb{T}^2))), \text{ for }p>4, 
\end{align} 
which implies
\begin{align}\label{strong-cv-fn-pathwise}
    \fb_n &\to  \wi \fb \quad \text{  in }  \mathrm{L}^{2}(0,{T};\dot{\Hb}^{1}(\mathbb{T}^2)), \Pb\text{-a.s.}, 
\end{align} 
By Banach-Alaoglu theorem, there exists 
\begin{center}
    $\wi\ub \in \Lrm^q(\Omega;\Lrm^{\infty}(0,T;\Drm(\A^{\frac{3}{2}})))$, $q\geq2$, and $\wi\ub\in \mathrm{C}([0,{T}];\Drm(\A))\cap\mathrm{L}^{\infty}(0,{T};\Drm(\A^{\frac{3}{2}}))\cap \mathrm{H}^1(0,{T};\Vb)$, $\Pb$-a.s.
\end{center}
such that up to a subsequence (denoting by the same)
\begin{equation}\label{weak-cv-yn}
\left\{
\begin{aligned}
\ub_n &\rightharpoonup \wi \ub \quad &&\text{  in } \Lrm^2(\Omega; \mathrm{L}^{2}(0,{T};\Drm(\A^{\frac{3}{2}}))), \\
		\ub_n &\overset{\ast}{\rightharpoonup} \wi \ub \quad && \text{  in }  \mathrm{L}^{\infty}(0,{T};\Drm(\A^{\frac{3}{2}})),\;\; \Pb\text{-a.s.},\\
		\ub_n &\rightharpoonup \wi \ub \quad &&\text{  in }  \mathrm{L}^{2}(0,{T};\Drm(\A^{\frac{3}{2}})), \;\; \Pb\text{-a.s.},\\
		\partial_t\ub_n &\rightharpoonup \partial_t \wi \ub \quad &&\text{  in }  \mathrm{L}^2(0,{T};\Vb),\;\; \Pb\text{-a.s.}.
\end{aligned}
\right.
\end{equation}
By \eqref{weak-cv-yn} and \cite[Lemma 3.1, p. 404]{temam2012infinite}, we  observe  that $\wi \ub\in \mathrm{C}([0,{T}],\Drm(\A))$ and therefore $\ub_n(0)$ converges to $\wi \ub(0)$ in $\Drm(\A)$, which gives  $\wi \ub(0)=\vb_0$.
Now, a similar arguments as in the proof of Theorem \ref{solution} provide that $( \wi\fb, \wi\vb:=\wi\ub+\zb)$ solves \eqref{STGF-Projected}.

Recall that  $\mathrm{J}:\mathcal{F}_{ad} \to \mathbb{R}^+$ given by \eqref{eqn-cost-functional} is convex and  continuous. From Proposition \ref{thm_z_alpha}, \eqref{weak-cv-yn} and \eqref{strong-cv-fn}, we have
\begin{align*}
	\fb_n \rightharpoonup \wi \fb \quad \text{  in } \mathrm{L}^{2}(\Omega; \mathrm{L}^{2}(0,{T};\dot{\Lb}^{2}(\mathbb{T}^2))) \text{ and }
	\vb_n \rightharpoonup \wi \vb \quad \text{  in } \mathrm{L}^{2}(\Omega; \mathrm{L}^{2}(0,{T};\Hb)).
\end{align*}
The weak lower semicontinuity of $\mathrm{J}$ ensures
$$ \mathrm{J}( \wi\fb, \wi\vb) \leq \liminf_n \mathrm{J}( \fb_n, \vb_n), $$
which gives that $( \wi\fb, \wi\vb)$ is an optimal pair.

\subsection{A necessary optimality condition for \texorpdfstring{\eqref{eqn-control-problem}}{}}

Suppose that $( \wi\fb, \wi\vb)$ is an optimal control pair. Consider $\boldsymbol{\psi} \in \mathcal{F}_{ad}$ and  define $\fb_\rho:=\wi\fb+\rho(\boldsymbol{\psi}-\wi\fb)$.  Thanks to Propositions \ref{prop-GD} and \ref{vari-cost}, we have   
\begin{align*}
	\dfrac{\mathrm{J}(\fb_\rho,\vb_\rho)-\mathrm{J}(\wi \fb, \wi \vb)}{\rho}= \Eb\left[\int_0^{{T}}\{ (\nabla_{\fb}\mathfrak{L}(t,\wi \fb(t), \wi\vb(t)),\boldsymbol{\psi}(t)-\wi{\fb}(t))+(\nabla_{\vb}\mathfrak{L}(t,\wi \fb(t), \wi\vb(t)),\mfrk(t))\}\drm t \right] +\dfrac{o(\rho)}{\rho}.
\end{align*}
Then, the  G\^ateaux derivative of the cost functional $\mathrm{J}$  is given by
\begin{align*}
	\lim_{\rho\to 0}\dfrac{\mathrm{J}(\fb_\rho,\vb_\rho)-\mathrm{J}(\wi \fb, \wi \vb)}{\rho} = \Eb\left[ \int_0^{{T}}\{ (\nabla_{\fb}\mathfrak{L}(t,\wi \fb(t), \wi\vb(t)),\boldsymbol{\psi}(t)-\wi{\fb}(t))+(\nabla_{\vb}\mathfrak{L}(t,\wi \fb(t), \wi\vb(t)),\mfrk(t))\}\drm t \right].
\end{align*}
Therefore, we have
\begin{align}\label{888}
\Eb\left[\int_0^{{T}}\{ (\nabla_{\fb}\mathfrak{L}(t,\wi \fb(t), \wi\vb(t)),\boldsymbol{\psi}(t)-\wi{\fb}(t))+(\nabla_{\vb}\mathfrak{L}(t,\wi \fb(t), \wi\vb(t)),\mfrk(t))\}\drm t \right] \geq 0,
\end{align}
where $\mfrk$ is the unique solution to the linearized problem \eqref{eqn-linearized-pi} with $\boldsymbol{\psi}$ replaced by $\boldsymbol{\psi}-\wi\fb$. 
\vspace{2mm}\\
Let $\tilde \pb$ be  the unique solution of \eqref{eqn-adjoint-pi}. The application of  Proposition \ref{duality-prop} yields
\begin{align}\label{8888}
	\Eb\left[ \int_0^{T}(\boldsymbol{\psi}(t)-\wi{\fb}(t),\tilde \pb(t))dt \right] =  \Eb\left[\int_0^{T}(\nabla_{\vb}\mathfrak{L}(t,\wi{\fb}(t),\wi{\vb}(t)) ,\mfrk(t))\drm t \right] .
\end{align}
Finally, we obtain the following optimality condition,  for any   $\boldsymbol{\psi} \in \mathcal{F}_{ad}$
\begin{align}\label{equation-duality}
	& \Eb\left[ \int_0^{T}(\boldsymbol{\psi}(t)-\wi{\fb}(t),\tilde \pb(t)+\nabla_{\fb}\mathfrak{L}(t,\wi{\fb}(t),\wi{\vb}(t)))\drm t \right] \nonumber\vspace{2mm}\nonumber\\& = \Eb\left[ \int_0^{T}(\nabla_{\vb}\mathfrak{L}(t,\wi{\fb}(t),\wi{\vb}(t)) ,\mfrk(t))\drm t \right]  +  \Eb\left[ \int_0^T (\nabla_{\fb}\mathfrak{L}(t,\wi{\fb}(t),\wi{\vb}(t)),\boldsymbol{\psi}(t)-\wi{\fb}(t))\drm t \right] \geq 0,
\end{align}
where we have used \eqref{888} and \eqref{8888}. The combination of the preceding sections leads to the proof of Theorem \ref{main-thm1}.

\medskip\noindent
{\bf Acknowledgments:}   
This work is funded by national funds through the FCT – Fundação para a Ciência e a Tecnologia, I.P., under the scope of the projects UID/00297/2025 (\url{https://doi.org/10.54499/UID/00297/2025}) and UID/PRR/00297/2025 (\url{https://doi.org/10.54499/UID/PRR/00297/2025}) (Center for Mathematics and Applications)”.

\medskip\noindent
\textbf{Data availability:} No data was used for the research described in the article.

\medskip\noindent
\textbf{Declarations}: During the preparation of this work, the authors have not used AI tools.

\medskip\noindent
\textbf{Conflict of interest:} The authors declare no conflict of interest.

\medskip\noindent
\textbf{Author Contributions:} All authors contributed equally.



\begin{thebibliography}{99}










\bibitem{Amrouche+Cioranescu_1997}  C. Amrouche and D. Cioranescu, On a class of fluids of grade 3, \emph{Int. J. Non-Linear Mech.}, \textbf{32} (1997), 73--88.





\bibitem{JBPCK} J. 	Babutzka, and P. C. Kunstmann, 
$L^q$-Helmholtz decomposition on periodic domains and applications to Navier-Stokes equations,	\emph{J. Math. Fluid Mech.}, {\bf 20}(3) (2018), 1093–-1121. 





\bibitem{Breckner_1999_thesis} H.I. Breckner, Approximation and Optimal Control of the Stochastic Navier–Stokes Equation, \emph{Halle (Saale)}, 1999 (Ph.D. Thesis).


\bibitem{Busuioc_2002} A.V. Busuioc, The regularity of bidimensional solutions of the third grade fluids equations, \emph{Math. Comput. Modelling}, {\bf 35}(7--8) (2002) , 733--742. 


\bibitem{Busuioc+Iftimie_2004}
A.V. Busuioc and  D. Iftimie,
\newblock {Global existence and uniqueness of solutions for the equations of third grade fluids,} \newblock \emph{Int. J.Non-Linear Mech.}, \textbf{39} (2004), 1–12.



\bibitem{Busuioc+Iftimie_2007} A.V. Busuioc and D. Iftimie, A non-Newtonian fluid with Navier boundary conditions, \emph{J. Dyn. Differ. Equ.}, \textbf{18} (2007), 357--379.



\bibitem{Chemetov+Cipriano_2025}  N.V. Chemetov and F. Cipriano, Optimal Control of Newtonian Fluids in a Stochastic Environment, \emph{SIAM J. Math. Anal.}, {\bf 57}(1) (2025), 364--403. 





\bibitem{Chemetov+Cipriano_2017}  N.V. Chemetov and F. Cipriano, Optimal control for two-dimensional stochastic second grade fluids,  \emph{Stoch. Processes Appl.} {\bf 128} (2018), 2710--2749.





\bibitem{Chemin+Xu_1997} J.-Y. Chemin and C.J. Xu, Inclusions de Sobolev en calcul de Weyl-H\"ormander et champs de vecteurs sous-elliptiques, \emph{Ann. Sci. \'Ecole Norm. Sup. (4)}, {\bf 30}(6) (1997), 719--751.






\bibitem{Cipriano+Didier+Guerra_2021} F. Cipriano, P. Didier and S. Guerra, Well-posedness of stochastic third grade fluid equation, \emph{J. Differ. Equ.}, \textbf{285} (2021), 496--535.




\bibitem{Cutland+Grzesiak_2007} N.J. Cutland and K. Grzesiak, Optimal control for two-dimensional stochastic Navier-Stokes equations, \emph{Appl. Math. Optim.}, \textbf{55} (2007), 61--91.

\bibitem{DaPrato+Debussche_2000} G. Da Prato and A. Debussche, Dynamic programming for the stochastic Navier-Stokes equations, \emph{Math. Model. Numer. Anal.}, \textbf{34}(2) (2000), 459--475.



 \bibitem{DaZ} \newblock G. Da Prato and J. Zabczyk,
	\newblock \emph{Stochastic Equations in Infinite Dimensions 2nd ed.},
	\newblock Cambridge University Press, 2014.


\bibitem{FR80} R.L. Fosdick and K.R. Rajagopal, {Thermodynamics and stability of fluids of third grade,} \emph{Proc. Roy. Soc. London Ser. A},  \textbf{339} (1980), 351--377.





\bibitem{Grafakos_2014} L. Grafakos, \emph{Classical Fourier Analysis}, third edition, Graduate Texts in Mathematics, Springer, New York, 2014.










\bibitem{HUAal20} T. Hayat, I. Ullah, A. Alsaedi and A. Bashir, Impact of temperature dependent heat source and non-linear radiative flow of third grade fluid with chemical aspects, \emph{Therm Sci.}, \textbf{24} (2 Part B) (2020), 1173--1182.









\bibitem{Liu+Rockner_Book_2015} W. Liu and M. R\"ockner, \emph{Stochastic Partial Differential Equations: An Introduction}, Springer, Berlin, 2015.
 
 
 \bibitem{Menaldi+Sritharan_2003} J.L. Menaldi and S.S. Sritharan, Impulse control of stochastic Navier-Stokes equations, \emph{Nonlinear Anal.}, \textbf{52}(2) (2003), 357--381.













\bibitem{Papoulis_1984} A. Papoulis, \emph{Probability, Random Variables, and Stochastic Processes}, Second edition, McGraw-Hill Book Co., New York, 1984.







\bibitem{PP19}  M. Parida and  S. Padhy, {Electro-osmotic flow of a third-grade fluid past a channel having stretching walls},  \emph{ Nonlinear Eng.}, \textbf{8}(1) (2019), 56--64.










   



	\bibitem{Pazy_1983}  A. Pazy,  \emph{Semigroups of Linear Operators and Applications to Partial Differential Equations}, Applied Mathematical Sciences, 44, Springer-Verlag, New York, 1983.


\bibitem{RHK18}
G.J. Reddy, A. Hiremath and M. Kumar, {Computational modeling of unsteady third-grade fluid flow	over a vertical cylinder: A study of heat transfer visualization}, \emph{Results Phys.} \textbf{8}, 671--682, 2018.




\bibitem{RE55} R.S. Rivlin and J.L. Ericksen, {Stress-deformation relations for isotropic materials}, \emph{Arch. Rational Mech. Anal.} \textbf{4} (1995),  323--425.



\bibitem{JCR}	J. C. Robinson, \emph{Infinite-Dimensional Dynamical Systems: An Introduction to Dissipative Parabolic PDEs and the Theory of Global Attractors}, Cambridge University Press, 2001. 


\bibitem{RRS}		J.C. Robinson,  J.L. Rodrigo and  W. Sadowski, \emph{The Three-Dimensional Navier–Stokes equations, Classical Theory}, Cambridge Studies in Advanced Mathematics, Cambridge University Press, Cambridge, UK, 2016.





\bibitem{Sequeira+Videman_1995} A. Sequeira and J. Videman, Global existence of classical solutions for the equations of third grade fluids, \emph{J. Math. Phys. Sci.}, \textbf{29} (1995), 47--69.


\bibitem{Sritharan_2000} S.S. Sritharan, Deterministic and stochastic control of Navier–Stokes equation with linear, monotone, and hyperviscosities, \emph{Appl. Math. Optim.}, \textbf{41}(2) (2000), 255--308.





\bibitem{Tahraoui+Cipriano_2023} Y. Tahraoui and F. Cipriano, Optimal control of two dimensional third grade fluids, \emph{J. Math. Anal. Appl.}, {\bf 523} (2023), no.~2, Paper No. 127032, 33 pp.







\bibitem{Tahraoui+Cipriano_2025_ESAIM} Y. Tahraoui and F. Cipriano, Optimal control of third grade fluids with multiplicative noise, \emph{ESAIM Control Optim. Calc. Var.}, {\bf 31} (2025), Paper No. 16.










\bibitem{Tahraoui+Cipriano_2024} Y. Tahraoui and F. Cipriano, Local strong solutions to the stochastic third grade fluid equations with Navier boundary conditions, \emph{Stoch. PDE: Anal. Comp.}, \textbf{12} (2024), 1699--1744.



    \bibitem{temam2001navier} R. Temam,  \emph{Navier-Stokes Equations, Theory and Numerical Analysis}, North-Holland, Amsterdam, 1984.


	\bibitem{temam2012infinite}	R. Temam,  \emph{Infinite-Dimensional Dynamical Systems in Mechanics and Physics}, Springer, New York, 1997.



\end{thebibliography}
\end{document}